\theoremstyle{plain}
\newtheorem{lemma}{Lemma}[section]
\newtheorem{proposition}[lemma]{Proposition}
\newtheorem{theorem}[lemma]{Theorem}
\newtheorem{Theorem}{Theorem}
\newtheorem{Corollary}[Theorem]{Corollary}
\newtheorem{corollary}[lemma]{Corollary}
\numberwithin{equation}{section}
\newenvironment{manualtheorem}[1]{%
  \manualtheoreminner
}{\endmanualtheoreminner}
\theoremstyle{remark}
\theoremstyle{definition}
\newtheorem{remark}[lemma]{Remark}
\newtheorem{example}[lemma]{Example}
\newtheorem{definition}[lemma]{Definition}
 \DeclareMathOperator{\C}{\mathcal{C}}
  \DeclareMathOperator{\Int}{Int}
 \DeclareMathOperator{\mb}{\mathsf{b}}
  \DeclareMathOperator{\M}{\mathsf{M}}
  \DeclareMathOperator{\Ric}{Ric}
   \DeclareMathOperator{\Det}{det}
 \DeclareMathOperator{\Div}{div}
  \DeclareMathOperator{\tr}{tr}
    \DeclareMathOperator{\adj}{adj}
    \DeclareMathOperator{\range}{Range}
\newcommand{\nocontentsline}[3]{}
\newcommand{\tocless}[2]{\bgroup\let\addcontentsline=\nocontentsline#1{#2}\egroup}
\begin{document}
\title[Scalar curvature deformation and mass rigidity]{Scalar curvature deformation and mass rigidity for ALH manifolds with boundary}
\author{Lan-Hsuan Huang}
\address{Department of Mathematics, University of Connecticut, Storrs, CT 06269, USA}
\email{lan-hsuan.huang@uconn.edu}
\author{Hyun Chul Jang}
\address{Department of Mathematics, University of Miami, Coral Gables, FL 33146, USA}
\email{h.c.jang@math.miami.edu}
\thanks{Both authors were partially supported by the NSF CAREER Award DMS-1452477. The first author was also partially supported by NSF DMS-2005588.}

\begin{abstract}
We study scalar curvature deformation for asymptotically locally hyperbolic (ALH) manifolds with nonempty compact boundary. We show that the scalar curvature map is locally surjective among either (1) the space of metrics that coincide exponentially toward the boundary, or (2) the space of metrics with arbitrarily prescribed nearby Bartnik boundary data. Using those results, we characterize the ALH manifolds that minimize the Wang-Chru\'sciel-Herzlich mass integrals in great generality and establish the rigidity of the positive mass theorems.  
\end{abstract}

\maketitle
{\small
\tableofcontents
\addtocontents{toc}{\setcounter{tocdepth}{2}} }
\section{Introduction}

Let $n\ge 3$ and $(N, h)$ be any closed $(n-1)$-dimensional Riemannian manifold. For $k\in \{ -1, 0 , 1\}$, we define a \emph{reference manifold} $\M = (r_k, \infty)\times N$ and $\mb = \frac{1}{r^2+k} dr^2 + r^2 h$ where $r_k = 0$ if $k=0, 1$, and $r_k=1$ if $k=-1$. A reference manifold is conformally compact with the conformal infinity $(N, h)$ and has an \emph{asymptotically locally hyperbolic} (or \emph{ALH} for short) end where the sectional curvature goes to $-1$ as $r\to \infty$.  Our reference manifolds include
\begin{itemize}
\item Locally hyperbolic manifolds, i.e., the sectional curvature of $\mb$ is identically $-1$, by letting $(N, h)$ have  constant sectional curvature $k$.
\item  Birmingham-Kottler manifolds which are Poincar\'e-Einstein, i.e. $\Ric_{\mb} = -(n-1) \mb$, by letting  $(N, h)$ have constant Ricci curvature $\Ric_h = k(n-2)h$). 
\end{itemize}
This paper concerns a general class of ALH manifolds (not necessarily conformally compact)  that are asymptotic to a reference manifold. By default (Definition~\ref{def:locallyhyperbolic}), our ALH manifold has  one end. While the results here can be modified to accommodate multiple ALH ends, we will not pursue in this paper.

%\margin{Please help update bibliography. Thanks.\\ {\color{red} I updated all citations. - HC} }

In prior joint work with D. Martin, we have established the results on scalar curvature deformation for asymptotically hyperbolic manifolds \emph{without boundary} and characterized the hyperbolic space as the equality case in the positive mass theorem for asymptotically hyperbolic manifolds~\cite{Huang:2020tm}. To extend those results to more general ALH manifolds, it is necessary to take into account of nonempty (compact) boundary because a reference manifold can have a ``cuspidal'' end as $r\to 0$ when $k=0$ or contain a  minimal hypersurface ``neck''  when $k=-1$.  In order to characterize those reference manifolds, we will consider them as ALH manifolds with nonempty boundary by chopping off the cuspidal end or cutting along the minimal hypersurface boundary.

A fundamental problem in the study of scalar curvature is that given a smooth manifold $M$ and a scalar function $\mathsf{R}$, whether one can find a Riemannian metric $\gamma$ whose scalar curvature realizes $\mathsf{R}$; namely, $R_\gamma =\mathsf{R}$. We will address this problem for small deformation of an ALH manifold. That is, given an ALH manifold $(M, g)$ and $\mathsf{R}$ sufficiently close to $R_g$, we shall find an ALH metric $\gamma$, close to $g$, such that $R_\gamma = \mathsf{R}$. When the manifold has nonempty boundary $\Sigma$, one can further impose boundary conditions for~$\gamma$. We will mainly consider two types of boundary conditions for~$\gamma$:
\begin{itemize}
\item The metric $\gamma$ converges to $g$ toward $\Sigma$ exponentially, as well as their derivatives up to (at least) the second order. 

\item For sufficiently small $(\tau, \phi)$, $\gamma$ satisfies $(\gamma^\intercal, H_\gamma) = (g^\intercal, H_g)+(\tau, \phi)$, where $^\intercal$ denotes the restriction on the tangent bundle of $\Sigma$ and $H_g$ is the mean curvature. 
\end{itemize}
The pair $ (\gamma^\intercal, H_\gamma)$ is often called the \emph{Bartnik boundary data}, so the second boundary condition is to prescribe the Bartnik boundary data of $\gamma$. 
 
 \vspace{6pt}
\noindent{\bf Sign convention for the mean curvature:} Given a unit normal $\nu$ along a hypersurface $\Sigma$, the second fundamental form $A_g$ is the tangential part of $\nabla \nu$ and the mean curvature $H_g=\Div_\Sigma \nu$. For an ALH manifold, we fix $\nu$ to  point to infinity. 
 \vspace{6pt}

We now state the scalar curvature deformation results in the next two theorems, subject to either boundary condition.  We refer to the precise definitions of the weighted linear spaces in Section~\ref{sec:basic}.  In a loose sense,  $\C^{\ell, \alpha}_{-q}(M)$ consists of $\C^{\ell,\alpha}_{\mathrm{loc}}(M)$ functions or tensors that decay to zero at the rate $q$ toward infinity, and  $\mathcal{B}^{\ell,\alpha}(M)$ consists of $\C^{\ell, \alpha}_{-q}(M)$ functions or tensors that are essentially comparable to $\exp(-\frac{2}{d(x)})$ toward $\Sigma$, where $d(x)$ is the distance function to $\Sigma$.  (In particular, a function (or tensor) in $\mathcal B^{\ell, \alpha}$ decays exponentially to zero  toward $\Sigma$.) We note the number $q\in (\frac{n}{2}, n)$ throughout the paper.

Let  $L_g$ be the linearized scalar curvature at $g$ given by $L_g h:= -\Delta_g (\tr_g h) + \Div_g \Div_g h - h\cdot \Ric_g$ where $h$ is a symmetric $(0,2)$-tensor. 

\begin{Theorem}\label{thm:surjectivity0}
Let $(M, g)$ be an ALH manifold (at rate $q$). Given any scalar function $f\in \mathcal B^{0,\alpha}(M)$, there exists a symmetric $(0,2)$-tensor $h\in \mathcal B^{2,\alpha}(M)$ solving 
\[
L_g h=f.
\] 
Furthermore, there exists $\epsilon>0$ and an open subset $\mathcal{U}\subset g+ \mathcal{B}^{2,\alpha}(M)$ such that given any scalar function $f$ with $\| f \|_{\mathcal{B}^{0,\alpha}(M)} < \epsilon$, there exists $\gamma \in \mathcal{U}$ such that $R_{\gamma} = R_g + f$ in $M$.
\end{Theorem}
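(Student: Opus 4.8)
The plan is to reduce the theorem to the invertibility of a fourth-order elliptic operator on the weighted spaces $\mathcal B^{\bullet,\alpha}(M)$, and then to close the nonlinear problem by a contraction-mapping argument. Write $L_g^* u = -(\Delta_g u)\,g + \nabla_g^2 u - u\,\Ric_g$ for the formal adjoint of $L_g$, and set $\mathcal L := L_g L_g^*$. A symbol computation shows that $\mathcal L$ is a formally self-adjoint elliptic operator of order four, with principal symbol $(n-1)|\xi|_g^4$, and for compactly supported $u$ the Green identity $\int_M u\,\mathcal L u = \int_M |L_g^* u|_g^2$ holds. The first step is to prove that $\mathcal L\colon \mathcal B^{4,\alpha}(M)\to\mathcal B^{0,\alpha}(M)$ is an isomorphism. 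Granting this, for $f\in\mathcal B^{0,\alpha}(M)$ set $h := L_g^*\big(\mathcal L^{-1}f\big)$; since $L_g^*$ is second order with coefficients controlled by $g$ and $\Ric_g$, it maps $\mathcal B^{4,\alpha}(M)$ continuously into $\mathcal B^{2,\alpha}(M)$, so $h\in\mathcal B^{2,\alpha}(M)$ and $L_g h = \mathcal L\big(\mathcal L^{-1}f\big) = f$. This settles the first assertion and, more importantly, produces a bounded right inverse $\mathcal R := L_g^*\circ\mathcal L^{-1}\colon\mathcal B^{0,\alpha}(M)\to\mathcal B^{2,\alpha}(M)$ of $L_g$.

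For the isomorphism, the decisive fact is that $(M,g)$ admits no nonzero static potential with the decay built into $\mathcal B^{\bullet,\alpha}$: if $u\in\mathcal B^{4,\alpha}(M)$ and $L_g^*u = 0$, then taking traces gives $\Delta_g u = -\tfrac{R_g}{n-1}u$, hence $\nabla_g^2 u = \big(\Ric_g - \tfrac{R_g}{n-1}g\big)u$; restricting this Hessian identity along geodesics issuing from $\Sigma$ and using that $u$ and $\nabla_g u$ vanish on $\Sigma$ (by the exponential decay), ODE uniqueness forces $u\equiv 0$ near $\Sigma$, and unique continuation for $\Delta_g u + \tfrac{R_g}{n-1}u = 0$ then gives $u\equiv 0$ on $M$. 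Given this triviality, the weighted Schauder estimates, the (over-)elliptic estimate for $L_g^*$ at a weight off the indicial roots, and a weighted compactness argument yield the coercivity bound $\|u\|_{\mathcal B^{2,\alpha}(M)} \le C\,\|L_g^* u\|_{\mathcal B^{0,\alpha}(M)}$; from it, applying the variational method to the coercive form $u\mapsto\int_M|L_g^*u|_g^2$ gives existence of solutions to $\mathcal L u = f$, coercivity gives uniqueness, and weighted elliptic regularity upgrades the solution to $\mathcal B^{4,\alpha}(M)$. The exponential decay at $\Sigma$ is essential here: it is what rules out static potentials even for those $(M,g)$ — for instance the reference manifolds themselves — that do carry static potentials globally, which is precisely the setting relevant to the rigidity applications. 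One must also check that all boundary contributions in the integrations by parts above genuinely vanish: at the ALH end because $u$ and its derivatives decay at rate $q>\tfrac n2$, and at $\Sigma$ because $u$ and its derivatives decay exponentially.

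Next, with the bounded right inverse $\mathcal R$ in hand, the nonlinear statement follows from a fixed-point scheme. Expanding the scalar curvature, $R_{g+h} = R_g + L_g h + Q_g(h)$, where $Q_g\colon\mathcal B^{2,\alpha}(M)\to\mathcal B^{0,\alpha}(M)$ is smooth with $Q_g(0) = 0$, $DQ_g(0) = 0$, and therefore has arbitrarily small Lipschitz constant on a small enough ball about $0$. Solving $R_{g+h} = R_g + f$ is equivalent to the fixed-point equation $h = \mathcal R\big(f - Q_g(h)\big)$, and for $\|f\|_{\mathcal B^{0,\alpha}(M)} < \epsilon$ with $\epsilon$ small the map $h\mapsto\mathcal R\big(f-Q_g(h)\big)$ sends the closed ball $\{\|h\|_{\mathcal B^{2,\alpha}(M)}\le C\epsilon\}$ into itself and is a contraction there, so the Banach fixed-point theorem yields $h$ with $R_{g+h} = R_g + f$ and $\|h\|_{\mathcal B^{2,\alpha}(M)}\le C\epsilon$. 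Taking $\gamma := g+h$ and $\mathcal U := \{\gamma'\in g+\mathcal B^{2,\alpha}(M): \|\gamma'-g\|_{\mathcal B^{2,\alpha}(M)} < 2C\epsilon\}$ completes the argument. (Equivalently, one invokes the Banach-space inverse function theorem, whose hypotheses hold since the differential of $h\mapsto R_{g+h}$ at $0$ is $L_g$ and $\mathcal R$ is a bounded right inverse.)

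The step I expect to be the main obstacle is the isomorphism property of $\mathcal L$ — concretely, developing the weighted elliptic theory (Schauder estimates, semi-Fredholm/coercivity estimates, and the weighted compact embeddings feeding the compactness argument) for $\mathcal L$ and $L_g^*$ on the doubly weighted space $\mathcal B^{\bullet,\alpha}(M)$, which simultaneously encodes rate-$q$ decay at the ALH end and exponential decay toward $\Sigma$. At infinity this is the familiar ALH/conformally compact weighted analysis with $q\in(\tfrac n2,n)$ chosen off the indicial roots; near $\Sigma$ one must verify that prescribing exponential decay — faster than every indicial rate of the model operator at $\Sigma$ — still yields a well-posed Fredholm-type problem, and must justify the vanishing of the boundary terms there. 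By contrast, once the decay at $\Sigma$ is available, the ODE/unique-continuation argument ruling out static potentials, and the contraction estimates for $Q_g$, are routine.
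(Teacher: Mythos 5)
There is a genuine gap, and it sits exactly where you flagged the ``main obstacle'': the unweighted composite $\mathcal L = L_g L_g^*$ is \emph{not} surjective between spaces that impose exponential decay toward $\Sigma$, so the right inverse $\mathcal R = L_g^*\circ \mathcal L^{-1}$ you build the whole argument on does not exist. To see why, suppose $u$ and its derivatives vanish to high order at $\Sigma$ and $\mathcal L u = f$. For \emph{any} solution $v$ of $\mathcal L v = 0$ in $\Int M$ with suitable decay at infinity but with \emph{no boundary condition whatsoever at $\Sigma$}, Green's identity gives $\int_M v\, f\, d\mu_g = \int_M (\mathcal L v)\, u \, d\mu_g = 0$: all boundary terms at $\Sigma$ die because $u$ decays exponentially there, and those at infinity die because $2q>n$. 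The space of such $v$ is infinite-dimensional (it is parametrized, modulo a finite-dimensional space, by the free Cauchy data of the fourth-order operator $\mathcal L$ on $\Sigma$), so the range of $\mathcal L$ has infinite codimension. This is the general phenomenon that a \emph{determined} elliptic operator with the solution forced to vanish to infinite order at a boundary hypersurface is overdetermined. Note that the theorem itself survives because $L_g$ acting on symmetric $2$-tensors is \emph{under}determined: its cokernel is the kernel of the overdetermined operator $L_g^*$ on scalars with no boundary condition at $\Sigma$, i.e.\ the static potentials, which Lemma~\ref{lemma:linear} shows are either linearly growing or identically zero. Relatedly, your mechanism for killing the kernel (vanishing of $u$ and $\nabla u$ on $\Sigma$ plus the Hessian ODE along normal geodesics) is not the one that is actually needed: what must be ruled out are static potentials with \emph{no} decay at $\Sigma$, and the paper does this via the growth dichotomy at infinity (Lemma~\ref{lemma:linear} together with Remark~\ref{remark:dual}), not via unique continuation from $\Sigma$.

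The repair is Corvino's device, which the paper follows: insert the weight \emph{inside} the operator. One minimizes $\mathcal G(u)=\int_M(\tfrac12\rho|L_g^*u|^2-fu)\,d\mu_g$ over $u\in H^2_\rho(M)$, a space built from the closure of $\C^\infty_c(M)$ in a $\rho$-weighted norm that imposes \emph{no decay of $u$ at $\Sigma$} (the regularity theory in fact shows $u$ may blow up like $\rho^{-1/2}$ there). The Euler--Lagrange equation is $L_g(\rho L_g^* u)=f$, and one sets $h:=\rho L_g^* u$; the exponential decay of $h$ at $\Sigma$ then comes from the factor $\rho$, not from any decay of $u$. The coercivity $\|u\|_{H^2_\rho(M)}\le C\|L_g^*u\|_{L^2_\rho(M)}$ (Propositions~\ref{prop:exterior estimate}--\ref{prop:coercivity}) is proved by a direct integration by parts against radial weights on the ALH end plus a compactness argument in the interior, and the weighted regularity near $\Sigma$ and near infinity (degenerate, respectively uniformly degenerate, fourth-order theory) upgrades $h$ to $\mathcal B^{2,\alpha}(M)$. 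Your final step --- the contraction/inverse-function-theorem argument given a bounded right inverse, including the quadratic estimate for $R_{g+h}-R_g-L_gh$ in the $\mathcal B$-norms --- is fine and matches the paper's Picard iteration, but it cannot start until the right inverse is constructed by the weighted scheme rather than by inverting $L_gL_g^*$.
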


Our approach to Theorem~\ref{thm:surjectivity0} uses the variational method for compact manifolds with boundary of J. Corvino~\cite{Corvino:2000td}. But we note that the \emph{no-kernel} assumption in \cite{Corvino:2000td} is not needed in Theorem~\ref{thm:surjectivity0} because we can derive the coercivity estimate on an ALH end without the no-kernel assumption (see Proposition~\ref{prop:exterior estimate}).

%For a Riemannian manifold $(M, g)$ with boundary $\Sigma$, the \emph{Bartnik boundary data} is $(g^\intercal, H_g)$ where $g^\intercal$ is the induced metric on $\Sigma$ and $H_g = \mathrm{div}_\Sigma \nu$ is the mean curvature with respect to a specified unit normal $\nu$. Throughout the paper, we consider $(M, g)$ is an ALH manifolds and the unit normal $\nu$ points to infinity. Let $g(t)$ be a differentiable family of ALH metrics in $M$, and we can consider the deformation $h=g'(0)$ and define the corresponding linearized scalar curvature map $L_g h$ and Bartnik boundary data $(h^\intercal,  DH|_g(h))$. 

The following theorem concerns scalar curvature deformation with prescribed Bartnik boundary data. An analogous statement, see Theorem~\ref{thm:AF} in Appendix~\ref{sec:af},  for asymptotically flat manifolds is established by M.~Anderson and J.~Jauregui~\cite{Anderson:2019tm} for $n=3$. We give a different proof for general dimensions $n\ge 3$ in Appendix~\ref{sec:af}. We use Theorem~\ref{thm:AF}, together with Theorem~\ref{thm:surjectivity0} and a ``gluing'' lemma that constructs an asymptotically flat manifold from an ALH manifold, to prove the following theorem.  
\begin{Theorem}\label{thm:boundary0}
Let $(M, g)$ be ALH at rate $q$. Then the map from  a symmetric $(0,2)$-tensor $h$ 
\[
h\in  \C^{2,\alpha}_{-q}(M)\longmapsto \big( L_g h, h^\intercal, DH|_g(h)\big) \in \C^{0,\alpha}_{-q}(M) \times \C^{2,\alpha}(\Sigma)\times \C^{1,\alpha}(\Sigma)
\]
 is surjective, where $DH|_g$ denotes the linearized mean curvature at $g$.  

By Local Surjectivity Theorem, there exists $\epsilon>0$ and an open subset $\mathcal{U}\subset g+  \C^{2,\alpha}_{-q} (M)$  such that given any $(f, \tau, \phi)$ with $\| (f, \tau, \phi) \|_{\C^{0,\alpha}_{-q}(M) \times \C^{2,\alpha}(\Sigma)\times \C^{1,\alpha}(\Sigma)} < \epsilon$, there exists  $\gamma\in \mathcal{U}$  such that 
\begin{align*}
	R_{\gamma} &= R_g + f\quad \mbox{ in } M\\
	(\gamma^\intercal, H_\gamma)&=(g^\intercal, H_g) + (\tau, \phi)\quad \mbox{ on } \Sigma.
\end{align*}
\end{Theorem}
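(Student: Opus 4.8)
The plan is to reduce Theorem~\ref{thm:boundary0} to the already established surjectivity results by a ``gluing'' argument that trades the ALH end for an asymptotically flat end. First I would observe that the linearized statement splits naturally into an interior part (controlled by $L_g$) and a boundary part (controlled by $(h^\intercal, DH|_g(h))$). The strategy is: given a target $(f,\tau,\phi)$, first use Theorem~\ref{thm:surjectivity0} to solve $L_g h_0 = f$ with $h_0 \in \mathcal{B}^{2,\alpha}(M)$, which in particular decays exponentially toward $\Sigma$, so $h_0^\intercal = 0$ and $DH|_g(h_0) = 0$ on $\Sigma$ to infinite order; this disposes of the interior term without disturbing the boundary data. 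It then remains to find, for prescribed $(\tau,\phi)$, a tensor $h_1 \in \C^{2,\alpha}_{-q}(M)$ with $L_g h_1 = 0$ (or at least small and correctable), $h_1^\intercal = \tau$, $DH|_g(h_1) = \phi$. Superposing $h = h_0 + h_1$ then solves the linearized problem, and the nonlinear statement follows from the Local Surjectivity Theorem applied to the map $\gamma \mapsto (R_\gamma, \gamma^\intercal, H_\gamma)$, once one checks the relevant spaces are Banach and the map is $\C^1$ with the asserted surjective differential.

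The substance is thus the boundary piece, and here the plan is to import the asymptotically flat result, Theorem~\ref{thm:AF} from Appendix~\ref{sec:af}, via the gluing lemma mentioned in the excerpt. Concretely, I would excise a large coordinate region near infinity of $(M,g)$ and glue in an asymptotically flat end, producing a manifold $(\widehat M, \widehat g)$ which agrees with $(M,g)$ on a neighborhood of $\Sigma$ (in particular has the same Bartnik boundary data there) and is asymptotically flat. The Bartnik-data surjectivity for asymptotically flat manifolds (Theorem~\ref{thm:AF}) yields a tensor $\widehat h$ on $\widehat M$ realizing $(0,\tau,\phi)$ for the triple $(L_{\widehat g}, \cdot^\intercal, DH|_{\widehat g})$, decaying at the asymptotically flat rate. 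Cutting $\widehat h$ off away from $\Sigma$ and transplanting it back to $M$ gives a compactly supported (hence trivially in $\C^{2,\alpha}_{-q}(M)$) tensor whose boundary data is exactly $(\tau,\phi)$ but which now generates an error $L_g \widetilde h_1 = e$ supported in the gluing region, with $e$ controlled by $\|(\tau,\phi)\|$. Finally, re-apply Theorem~\ref{thm:surjectivity0} (interior surjectivity, with the exponentially decaying class, or even just an interior solvability statement for compactly supported data) to solve $L_g h_2 = -e$ with $h_2$ vanishing near $\Sigma$; then $h_1 := \widetilde h_1 + h_2$ has the desired properties, and $h := h_0 + h_1$ finishes the linearized claim.

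The main obstacle I anticipate is making the gluing step compatible with \emph{all} of the structure simultaneously: the glued manifold must be genuinely asymptotically flat (so that Theorem~\ref{thm:AF} applies), must retain an isometric copy of a collar of $(M, g)$ near $\Sigma$ so that Bartnik data and the operators $L$, $DH$ match verbatim, and the transplant-and-correct loop must not reintroduce boundary error --- this forces the correction tensors ($h_0$ from the first step, $h_2$ from the last) to be \emph{exactly} trivial on $\Sigma$, which is why working in the exponentially decaying class $\mathcal{B}^{2,\alpha}$, where boundary data and all low-order derivatives vanish identically, is essential rather than merely convenient. A secondary technical point is verifying the hypotheses of the Local Surjectivity Theorem: one needs the nonlinear map $\gamma \mapsto (R_\gamma, \gamma^\intercal, H_\gamma)$ to be continuously differentiable between the stated weighted Hölder spaces with the linearization at $g$ being the surjective map above, which is routine but requires care near $\Sigma$ where the two boundary components of the target live in different (unweighted) Hölder spaces than the interior component. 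I would also need a quantitative version of the surjections (bounded right inverses) so that the superposition $h = h_0 + h_1$ depends boundedly on $(f,\tau,\phi)$, which the open-mapping/Local Surjectivity framework supplies.
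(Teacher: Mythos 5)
Your overall framework --- glue in an asymptotically flat end that retains an isometric collar of $\Sigma$, invoke Theorem~\ref{thm:AF} there, cut off, and correct the resulting compactly supported error with Theorem~\ref{thm:surjectivity0} --- is exactly the paper's route (Lemma~\ref{lemma:interpolation2}, Proposition~\ref{prop:surjectivity 2}, Theorem~\ref{thm:surjectivity-Bartnik}). But your very first step has a genuine gap: you cannot ``use Theorem~\ref{thm:surjectivity0} to solve $L_g h_0 = f$'' for a general $f \in \C^{0,\alpha}_{-q}(M)$. Theorem~\ref{thm:surjectivity0} only produces solutions for sources $f \in \mathcal{B}^{0,\alpha}(M)$, i.e.\ functions that decay exponentially toward $\Sigma$, and the target space in Theorem~\ref{thm:boundary0} imposes no decay at $\Sigma$ whatsoever. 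So the decomposition $(f,\tau,\phi) = (f,0,0) + (0,\tau,\phi)$ with the first summand handled purely by the exponentially-weighted theory does not get off the ground; the interior source near $\Sigma$ is precisely the part that the weighted variational method cannot reach.

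The fix is to fold the near-boundary portion of $f$ into the asymptotically flat step rather than treating $(f,0,0)$ separately: on the glued manifold $(M',g')$, apply Theorem~\ref{thm:AF} to a compactly supported $f_0$ that agrees with $f$ in a collar of $\Sigma$ (with zero Bartnik data), obtaining $h_0$; after cutting off by $\eta$, the residual $f - L_g(\eta h_0)$ vanishes identically near $\Sigma$, hence lies in $\mathcal{B}^{0,\alpha}(M)$, and only then does Theorem~\ref{thm:surjectivity0} apply. This is what Proposition~\ref{prop:surjectivity 2} does. Your treatment of the boundary data $(\tau,\phi)$ via the AF theorem is fine (the paper instead realizes $(\tau,\phi)$ by an explicit compactly supported tensor $\hat{\tau}$ and reserves the AF theorem for the source term; either works), and your remarks on the Local Surjectivity Theorem and on $\mathcal{B}^{2,\alpha}$ tensors having vanishing linearized Bartnik data are correct.
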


We give several applications related to the \emph{Wang-Chru\'sciel-Herzlich} mass integrals from those scalar curvature deformation results. (Note that those applications use only the results concerning the linearized operators in Theorem~\ref{thm:surjectivity0} and Theorem~\ref{thm:boundary0}.) We recall that a Riemannian manifold $(M, g)$ is \emph{static} if it admits a function $V$ solving $-\Delta V g + \nabla^2 V - V\,\Ric_g =0$. Such $V$ is called a \emph{static potential}. When $(M, g)$ is ALH with respect to a reference manifold $(\M, \mb)$ that is static with a static potential $V_0$, one can define the deficit of $(M, g)$ from $(\M, \mb)$ by the Wang-Chru\'sciel-Herzlich mass integral $m(g, V_0)$. See Definition~\ref{def:massfunctional}. Our goal is to characterize an ALH manifold that minimizes $m(g, V_0)$ among suitable classes of competitive ALH metrics. 

To set the stage, we let $(M, g)$ be an ALH manifold with boundary $\Sigma$, having mean curvature $H_g \le H_0$ for some function $H_0$ (can be constant). Let $\mathcal{U}$ be a small open neighborhood of $g$ in $g+\C^{2,\alpha}_{-q}(M)$ that contains only Riemannian metrics and define
\begin{equation}\label{eq:neighborhood}
	\begin{split}
    \mathcal{U}_{\star} &= \big\{ \gamma\in \mathcal{U}: (\gamma^\intercal, H_\gamma) = (g^\intercal, H_g)   \mbox{ on } \Sigma \big\} \\
    \mathcal{U}_{\star\star}^{H_0} &=\{ \gamma\in \mathcal{U}: H_\gamma \le H_0 \mbox{ on } \Sigma\}.
  \end{split}
\end{equation}
Note that $\mathcal{U}_{\star}$ is a strictly smaller subset of  $\mathcal{U}_{\star\star}^{H_0} $.

\begin{Theorem}\label{thm:static1}
Let $(\M, \mb)$ be a static, reference manifold with a static potential $V_0$. Let $(M,g)$ be an ALH manifold  with respect to $(\M, \mb)$, having nonempty boundary $\Sigma$. Suppose $(M, g)$ is a mass minimizer in the following sense:
\begin{itemize}  
\item[($\star$)] There is $\mathcal{U}_{\star}$ such that for any $\gamma\in \mathcal{U}_{\star}$ with $R_\gamma = R_g$ in $M$, we have $m(\gamma, V_0)\ge m(g, V_0)$.  
\end{itemize}
 	Then $(M, g)$ is static with a static potential $V$ satisfying $V -V_0=O(r^{-d})$ for some number $d>0$.
\end{Theorem}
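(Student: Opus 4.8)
The plan is to run a Lagrange-multiplier / variational argument for the constrained minimization problem defined by ($\star$). Since $g$ minimizes $m(\cdot, V_0)$ among metrics in $\mathcal U_\star$ with $R_\gamma = R_g$, the first step is to compute the first variation of the mass integral $m(\gamma, V_0)$ in a direction $h \in \C^{2,\alpha}_{-q}(M)$. A standard integration-by-parts identity (of the type underlying the Wang-Chru\'sciel-Herzlich mass; cf. the analogous computations in \cite{Corvino:2000td, Huang:2020tm}) should express $Dm|_g(h)$ as a bulk pairing $\int_M (L_g^* V_0)\cdot h \, d\mathrm{vol}_g$ plus a boundary term on $\Sigma$ involving $h^\intercal$, $DH|_g(h)$, $V_0$, and $\partial_\nu V_0$. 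On $\mathcal U_\star$ the Bartnik data is fixed, so $h^\intercal = 0$ and $DH|_g(h) = 0$ along $\Sigma$, killing (most of) the boundary term; and along the ALH end the decay rate $q \in (\tfrac n2, n)$ makes the asymptotic boundary term at infinity reduce precisely to (a constant multiple of) $Dm|_g(h)$.

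The second step is to encode the constraint $R_\gamma = R_g$ and the boundary conditions using Theorem~\ref{thm:boundary0}. That theorem says the linearized constraint map $h \mapsto (L_g h, h^\intercal, DH|_g(h))$ from $\C^{2,\alpha}_{-q}(M)$ onto $\C^{0,\alpha}_{-q}(M)\times \C^{2,\alpha}(\Sigma)\times\C^{1,\alpha}(\Sigma)$ is surjective, hence (by the Local Surjectivity / implicit function theorem argument already invoked in the excerpt) the constraint set $\{\gamma \in \mathcal U_\star : R_\gamma = R_g\}$ is, near $g$, a Banach submanifold whose tangent space at $g$ is exactly $\ker L_g \cap \{h^\intercal = 0,\ DH|_g(h) = 0\}$. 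Because $g$ minimizes the smooth functional $m(\cdot, V_0)$ on this submanifold, its differential must annihilate that tangent space: $Dm|_g(h) = 0$ for every $h$ with $L_g h = 0$, $h^\intercal = 0$, $DH|_g(h)=0$. Combining with Step 1, $\int_M (L_g^* V_0)\cdot h = 0$ for all such $h$, i.e. $L_g^* V_0$ annihilates the kernel of the (boundary-constrained) linearized scalar curvature operator.

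The third step is a duality/closed-range argument: since the full linearized constraint operator $h\mapsto (L_g h, h^\intercal, DH|_g h)$ has closed range (indeed is surjective, by Theorem~\ref{thm:boundary0}), its formal adjoint has the expected mapping properties, and the statement ``$L_g^* V_0 \perp \ker$'' forces $L_g^* V_0$ to lie in the range of that adjoint — concretely, there exist a scalar function $\mathsf u$ on $M$ and data on $\Sigma$ so that $L_g^* V_0 = 0$ in the interior after absorbing the boundary contributions, which is exactly the statement that $(M,g)$ admits a static potential $V$ (the equation $L_g^* V = -\Delta V\, g + \nabla^2 V - V\Ric_g = 0$ is the static equation). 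The natural candidate is $V = V_0 + w$ where $w$ solves the linearized static equation $L_g^* w = -L_g^* V_0$ with appropriate (exponentially decaying or Bartnik-type) boundary conditions; solvability again comes from Theorem~\ref{thm:boundary0} by duality, and elliptic regularity upgrades $w$ to a classical solution. The asymptotic decay $V - V_0 = O(r^{-d})$ for some $d>0$ then follows from analyzing the indicial roots of $L_g^*$ on the ALH reference end — since $V_0$ is the static potential of $\mb$ and $g - \mb$ decays like $r^{-q}$, the correction $w$ solves an equation with $O(r^{-q})$ forcing and, choosing $d$ below the first positive indicial root and $\le q$, one gets the claimed rate.

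The main obstacle I anticipate is the third step: passing from the orthogonality relation $L_g^* V_0 \perp \ker(\text{constrained } L_g)$ to the existence of an actual static potential $V$ near $V_0$ with the correct behavior at both the boundary $\Sigma$ and the ALH infinity. This requires a clean functional-analytic framework in which the linearized static operator $L_g^*$ (with the two boundary conditions coming from prescribing $h^\intercal$ and $H$) is a well-posed elliptic boundary value problem on weighted spaces, with closed range and an accurate description of its cokernel; the boundary term bookkeeping — making sure that fixing $(h^\intercal, DH|_g h) = (0,0)$ exactly cancels the boundary integrals in the first-variation formula so that no spurious Neumann-type data for $V$ is forced on $\Sigma$ — is the delicate point, and is presumably where the specific geometry of ALH ends (and the role of the rate $q \in (\tfrac n2, n)$, which guarantees the mass integral is well-defined and that the relevant indicial root arithmetic works out) enters essentially.
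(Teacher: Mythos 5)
Your overall strategy is the same as the paper's: use the surjectivity of $h\mapsto (L_gh,h^\intercal,DH|_g(h))$ from Theorem~\ref{thm:boundary0} to run a Lagrange-multiplier argument for the constrained minimization, identify the multiplier $\lambda$ as a weak solution of $L_g^*\lambda=-L_g^*V_0$ via the first variation formula (the paper's Lemma~\ref{lemma:functional}), and set $V=V_0+\lambda$. Two remarks on the setup: the paper works with the Regge--Teitelboim-type functional $\mathcal{F}_{V_0}(\gamma)=m(\gamma,V_0)-\int_M V_0\big(R_\gamma+n(n-1)\big)\,d\mu_g$ rather than with $m(\cdot,V_0)$ itself, because the mass integral alone need not converge off the constraint set $R_\gamma=R_g$, whereas $\mathcal{F}_{V_0}$ is finite and differentiable on a full neighborhood of $g$ (Lemma~\ref{lemma:functional}); on the constraint set the two differ by a constant, so this is a presentational rather than substantive difference. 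Your third step is phrased in terms of ``$L_g^*V_0$ lying in the range of the adjoint,'' but what the argument actually produces is a multiplier $\lambda\in\big(\C^{0,\alpha}_{-q}(M)\big)^*$ with $D\mathcal{F}|_g(h)=\lambda(L_gh)$ for compactly supported $h$; elliptic regularity then upgrades $\lambda$ to a classical solution of $L_g^*\lambda=-L_g^*V_0$. One does not separately ``solve'' this equation with prescribed boundary conditions at $\Sigma$ — no boundary condition for $\lambda$ on $\Sigma$ is needed or obtained at this stage (the extra boundary information only appears in Theorem~\ref{thm:positive} under the stronger hypothesis $(\star\star)_{H_0}$).

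The genuine gap is in your derivation of the decay $V-V_0=O(r^{-d})$. You argue that since $\lambda$ solves an equation with $O(r^{1-q})$ forcing, an indicial-root count gives decay. That inference fails: the homogeneous equation $L_g^*w=0$ on the reference end has linearly growing solutions (the reference static potentials themselves, e.g.\ $\sqrt{r^2+k}$ and, for hyperbolic space, the $x_i$, all grow like $r$), so a solution with decaying source may still grow linearly in a cone. The paper's Lemma~\ref{lemma:linear} gives exactly this dichotomy — either linear growth in a cone or $O(r^{-d})$ decay — and the growing alternative is excluded not by indicial arithmetic but by the a priori information that the Lagrange multiplier is a \emph{bounded linear functional} on $\C^{0,\alpha}_{-q}(M)$, which is incompatible with linear growth in a cone (Remark~\ref{remark:dual}). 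Your proof needs this extra input; without it the conclusion $V-V_0=O(r^{-d})$, and hence the asymptotic identification of $V$ with $V_0$, does not follow.
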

 
The proof of Theorem~\ref{thm:static1} relies on a variational argument of R.~Bartnik for the Regge-Teitelboim functional among the scalar curvature constraint  in \cite{Bartnik:2005tn}. That approach has led to other applications, such as establishing the equality case in the spacetime positive mass theorem by the first author and D.~Lee~\cite{Huang:2020um}. See also the recent work of D. Lee, M. Lesourd, and R. Unger~\cite{Lee-Lesourd-Unger:2021} for spacetime positive mass theorem with nonempty boundary.

Separately, Theorem~\ref{thm:static1} has an immediate corollary that a Bartnik mass minimizer must be static. See Section~\ref{sec:Bartnik}.

A version of Theorem~\ref{thm:static1} would also hold for \emph{boundaryless} ALH manifolds by replacing $(\star)$ with the following assumption (by a similar argument as in \cite[Theorem 4.3]{Huang:2020tm}):
\begin{itemize}  
\item[($\star\star\star$)] There is $\mathcal{U}$ such that for any $\gamma\in \mathcal{U}$ with $R_\gamma = R_g$ in $M$, we have $m(\gamma, V_0)\ge m(g, V_0)$. 
\end{itemize}
 A main advantage in the boundaryless case  is that the static potential $V$ must be positive by maximum principle, provided that $V$ is positive near infinity. On the other hand, with nonempty boundary, there are plenty of examples of ALH static manifolds whose static potentials have zeros and are positive near infinity. From that regard, the following theorem is particularly interesting as the mass minimizing condition  $(\star\star)_{H_0}$ enables us to obtain a positive static potential which satisfies additional boundary conditions. 
\begin{Theorem}\label{thm:positive}
Let $(\M, \mb)$ be a static, reference metric with a static potential $V_0>0$ near infinity.	Let $(M,g)$ be an ALH manifold  with respect to $(\M, \mb)$, having nonempty boundary $\Sigma$, and let $H_0$ be a function (can be constant) such that $H_g\le H_0$.  Suppose that
\begin{itemize}  
\item[$(\star\star)_{H_0}$] For any $\gamma\in \mathcal{U}_{\star\star}^{H_0}$ with $R_\gamma = R_g$ in $M$,  we have $m(\gamma, V_0)\ge m(g, V_0)$.
\end{itemize}
Then the following holds:
\begin{enumerate}
\item $(M, g)$ is static with a static potential $V$ satisfying $V -V_0 =O(r^{-d})$ for some number $d>0$.\label{item:static}
\item  $VA_g= \nu(V)g^\intercal$ on $\Sigma$ where  $\nu$ is the unit normal on $\Sigma$ pointing to infinity. \label{item:umbilic}
\item $V>0$ everywhere in $M$.\label{item:positive}
\item  $\Sigma$ has mean curvature $H_g =H_0$.   \label{item:constant}
\end{enumerate}
\end{Theorem}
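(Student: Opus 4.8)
The plan is to deduce (1) directly from Theorem~\ref{thm:static1}, and then to read off (2), (4), and (3) from a single first-variation-of-mass computation that exploits the \emph{enlarged} competitor class $\mathcal{U}_{\star\star}^{H_0}$; I will prove (2) and (4) first, then extract sign information on $\Sigma$, and obtain (3) last. Since $\mathcal{U}_\star\subset\mathcal{U}_{\star\star}^{H_0}$, hypothesis $(\star\star)_{H_0}$ implies $(\star)$, so Theorem~\ref{thm:static1} applies and produces a static potential $V$ with $V-V_0=O(r^{-d})$, which is (1). For later use I record that, for any static metric, contracting the second Bianchi identity in $L_g^*V=0$ gives $V\,dR_g=0$, so $R_g$ is locally constant on $\{V\neq0\}$; since $L_g^*V=0$ also gives $\nabla^2 V = V(\Ric_g-\tfrac{R_g}{n-1}g)$, the pair $(V,\nabla V)$ satisfies a Gronwall inequality along geodesics, so the zero set of the nontrivial potential $V$ has empty interior, $\{V\neq0\}$ is dense, $R_g$ is constant, and the ALH asymptotics force $R_g\equiv-n(n-1)$, i.e. $\Delta_g V = nV$ on $M$.

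Next I set up the first variation of mass with boundary. Fix $(\tau,\phi)\in\C^{2,\alpha}(\Sigma)\times\C^{1,\alpha}(\Sigma)$. By the surjectivity in Theorem~\ref{thm:boundary0} together with the Local Surjectivity Theorem, for small $t$ there is a curve $t\mapsto\gamma_t\in\mathcal{U}$ with $\gamma_0=g$, $R_{\gamma_t}=R_g$ in $M$, and $(\gamma_t^\intercal,H_{\gamma_t})=(g^\intercal,H_g)+t(\tau,\phi)+o(t)$; write $h=\dot\gamma_0$, so $L_gh=0$, $h^\intercal=\tau$, $DH|_g(h)=\phi$. If $\phi\le0$ on the closed set $\{H_g=H_0\}$ (shrinking its support slightly if necessary), then $H_{\gamma_t}\le H_0$ for small $t>0$, so $\gamma_t\in\mathcal{U}_{\star\star}^{H_0}$ and $(\star\star)_{H_0}$ gives $\frac{d}{dt}\big|_{t=0^+}m(\gamma_t,V_0)\ge0$. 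On the other hand, Corvino-type integration by parts for the pair $(V,h)$, writing $V\,L_gh-h\cdot L_g^*V=\Div_g(\cdots)$ and using $L_g^*V=0$, $L_gh=0$, and the decay $V-V_0=O(r^{-d})$ to match the flux at infinity with $Dm|_g(h)(V_0)$, identifies this derivative with a pure boundary integral over $\Sigma$,
\[
	\frac{d}{dt}\Big|_{t=0}m(\gamma_t,V_0)\;=\;c\int_\Sigma\Big(V\,\phi\;-\;\tfrac12\big\langle\tau,\;VA_g-\nu(V)g^\intercal\big\rangle_g\Big)\,dA_g,
\]
for a fixed nonzero constant $c$ whose sign is that of the mass normalization. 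Establishing this identity — cancellation of all bulk terms and the fact that the $\Sigma$-integrand depends on $h$ only through its Bartnik data $(\tau,\phi)$, with these precise coefficients — is the technical heart of the argument and rests on the linearized mean-curvature and Gauss formulas along $\Sigma$.

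Granting the displayed identity, (2) and (4) follow. In $\mathcal{U}_{\star\star}^{H_0}$ the tangential datum $\tau$ is unconstrained, so replacing $\tau$ by $\pm\tau$ with $\phi=0$ forces $VA_g-\nu(V)g^\intercal=0$ on $\Sigma$: this is (2) (tracing it and using (4) below yields the equivalent Robin condition $\nu(V)=\tfrac{H_0}{n-1}V$). The inequality now reads $c\int_\Sigma V\phi\,dA_g\ge0$. On the open set $\{H_g<H_0\}$ the sign of $\phi$ is free, so $V\equiv0$ there; by (2) also $\nu(V)\equiv0$ there, so $V$ has vanishing Cauchy data on an open subset of $\Sigma$, and unique continuation for $\Delta_gV=nV$ forces $V\equiv0$ on $M$, a contradiction. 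Hence $\{H_g<H_0\}=\emptyset$, i.e. $H_g\equiv H_0$: this is (4). With $H_g\equiv H_0$ every $\phi\le0$ on $\Sigma$ is admissible, so $c\int_\Sigma V\phi\,dA_g\ge0$ for all such $\phi$ gives $cV\le0$ on $\Sigma$; with the sign of $c$ fixed by the mass normalization this reads $V\ge0$ on $\Sigma$.

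Finally I prove (3). Since $V-V_0=O(r^{-d})$ and $V_0>0$ near infinity, $V>0$ outside a compact set, so $\{V\le0\}$ is a compact subset of $M$; if it is nonempty, $V$ attains $m=\min_M V\le0$ at some $p_0\in\{V\le0\}$. If $p_0$ is interior, then $V$ has a non-positive interior minimum while solving $(\Delta_g-n)V=0$, which by the strong maximum principle (zeroth-order coefficient $-n\le0$) forces $V$ constant, hence $V\equiv0$, a contradiction. If $p_0\in\Sigma$, then $V\ge0$ on $\Sigma$ gives $m=V(p_0)=0$, and the Hopf boundary point lemma (again with $-n\le0$ and minimum value $0$) yields $\nu(V)(p_0)>0$ unless $V$ is constant; but (2) gives $\nu(V)(p_0)=0$ since $V(p_0)=0$, so again $V\equiv0$, a contradiction. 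Therefore $\{V\le0\}=\emptyset$ and $V>0$ on $M$. The one step I expect to be genuinely delicate is the boundary first-variation identity: verifying that the interior divergence terms cancel, that the flux at infinity is $c\,Dm|_g(h)(V_0)$ up to an error controlled by $V-V_0=O(r^{-d})$, and that the residual flux over $\Sigma$ collapses exactly to the displayed combination of $\phi=DH|_g(h)$ and $\tau=h^\intercal$, insensitive to the normal and mixed components of $h$ along $\Sigma$; once this is in hand, the competitor construction respecting the one-sided constraint $H_\gamma\le H_0$ via Theorem~\ref{thm:boundary0} and the Local Surjectivity Theorem is routine.
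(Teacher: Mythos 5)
Your overall strategy coincides with the paper's: Item~(1) via Theorem~\ref{thm:static1}, then a first variation of the mass along curves of metrics with fixed scalar curvature and prescribed Bartnik data, reducing everything to the boundary integral $\int_\Sigma\big(-2V\phi+\tau\cdot(-VA_g+\nu(V)g^\intercal)\big)\,d\sigma_g$. This is exactly \eqref{eq:mass-decreasing}, obtained from Lemma~\ref{lemma:functional}, so your constant is $c=-2$; you do need to pin that sign down rather than defer it, since it is what converts the minimizing hypothesis into $V\ge 0$ rather than $V\le 0$ on $\Sigma$. Two of your routes differ genuinely from the paper's and are valid once the issue below is repaired: you obtain Item~(4) before Item~(3) by forcing $V=\nu(V)=0$ on the open set $\{H_g<H_0\}$ and invoking unique continuation from Cauchy data on an open piece of $\Sigma$, whereas the paper needs $V>0$ first; and you use Hopf's boundary point lemma where the paper integrates the static ODE along the normal geodesic emanating from a boundary zero of $V$.

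The genuine gap is the admissibility of your competitors. Along any curve produced by Theorem~\ref{thm:boundary0} (including the paper's conformal construction) one only has $H_{\gamma_t}=H_g+t\phi+o(t)$, and at a point where $H_g=H_0$ and $\phi=0$ the $o(t)$ term has no sign, so $H_{\gamma_t}\le H_0$ can fail; your stated criterion (``$\phi\le0$ on $\{H_g=H_0\}$'') is therefore false, and it is invoked at every step: the $\phi\equiv0$, $\pm\tau$ competitors for Item~(2), the two-signed $\phi$ supported in $\{H_g<H_0\}$ for Item~(4), and the $\phi\le0$ competitors for $V\ge0$ on $\Sigma$ are all inadmissible precisely on $\{H_g=H_0\}$ --- which by Item~(4) is all of $\Sigma$. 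The paper's fix is to take $\phi<0$ strictly (hence strictly negative on a neighborhood of $\{H_g=H_0\}$, which beats the $o(t)$ uniformly on the compact $\Sigma$) and to arrange the term of interest to dominate: for Item~(2) choose $\tau$ with $\int_\Sigma\tau\cdot(-VA_g+\nu(V)g^\intercal)\,d\sigma_g<0$ and then $|\phi|$ small; equivalently, run each of your tests with $\phi-\epsilon$ and let $\epsilon\to0^+$. Relatedly, differentiability in $t$ of $\gamma_t$ is not delivered by the Local Surjectivity Theorem alone; the paper builds $g(t)$ by hand, conformally correcting $g+th$ with a Neumann condition on the conformal factor and checking $u'(0)=0$, which is what legitimizes evaluating $\frac{d}{dt}\big|_{t=0}m(g(t),V_0)$ by the formula \eqref{eq:first}. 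With these repairs your argument goes through.
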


It is clear that the assumption $(\star\star)_{H_0}$, together with the assumption $H_g\le H_0$, would imply $(\star)$, and thus Item~\eqref{item:static} in Theorem~\ref{thm:positive} directly comes from Theorem~\ref{thm:static1}. So the new content is Items~\eqref{item:umbilic}-\eqref{item:constant}. Our proof is inspired by Anderson-Jauregui~\cite{Anderson:2019tm} (with different analytical details as discussed in Remark~\ref{remark:AJ}) where they analyze the boundary terms from the first variation of the Regge-Teitelboim functional for asymptotically flat manifolds.

Theorem~\ref{thm:positive} also reveals an interesting phenomenon relating to the Penrose inequality for ALH manifolds (see the work  of D.~Lee and A.~Neves~\cite{Lee:2015ue} and references therein). In Lemma~\ref{lem:nonnegative static potential}, we show that a static manifold with the static potential $V>0$ near infinity and having a locally outermost, locally area-minimizing, minimal hypersurface boundary $\Sigma$ must have $V=0$ on $\Sigma$. Note that the generalized Kottler metrics (see Example~\ref{ex:Kottler}) satisfy those properties.  However, by Theorem~\ref{thm:positive}, such a static manifold cannot be a mass minimizer in the sense of $(\star\star)_{0}$ as stated in the following corollary. This corollary can also be compared with Corollary 1.4 in \cite{Lee:2015ue} in which they show that a $3$-dimensional ALH manifold whose boundary is an outermost minimal surface must have the mass strictly greater than the critical mass.

\begin{Corollary}\label{corollary:minimal}
Let $(M, g)$ be a static, ALH manifold with $V>0$ near infinity and having nonempty boundary $\Sigma$. Suppose $\Sigma$ is a locally outermost, locally area-minimizing, minimal hypersurface. Then $(M, g)$ cannot be a mass minimizer, in the sense that given any $\epsilon>0$, there exists an ALH metric $\gamma$ on $M$ such that $\|\gamma-g\|_{\C^{2,\alpha}_{-q}(M)}<\epsilon$,  $R_\gamma=-n(n-1)$, $H_\gamma \le 0$, and $m(\gamma, V)< m(g, V)$. 
\end{Corollary}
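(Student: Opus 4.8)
The plan is to argue by contradiction, using Theorem~\ref{thm:positive} to produce a positive static potential and Lemma~\ref{lem:nonnegative static potential} to force that same potential to vanish on $\Sigma$. First I would record the set-up: since $\Sigma$ is minimal, $H_g = 0$, so I take $H_0 = 0$ and the standing hypothesis $H_g \le H_0$ of Theorem~\ref{thm:positive} holds; moreover a static metric has constant scalar curvature and a reference manifold carrying a static potential has $R_{\mb} = -n(n-1)$ (its sectional curvatures tend to $-1$), so $R_g = -n(n-1)$, which is the value appearing in the conclusion.

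Then I would suppose the conclusion fails, i.e.\ there is $\epsilon_0 > 0$ such that every ALH metric $\gamma$ with $\|\gamma - g\|_{\C^{2,\alpha}_{-q}(M)} < \epsilon_0$, $R_\gamma = -n(n-1)$, and $H_\gamma \le 0$ satisfies $m(\gamma, V) \ge m(g, V)$. Shrinking the neighborhood $\mathcal{U}$ in \eqref{eq:neighborhood} to lie inside the $\epsilon_0$-ball (and to consist of Riemannian metrics), every $\gamma \in \mathcal{U}_{\star\star}^0$ with $R_\gamma = R_g = -n(n-1)$ then obeys $m(\gamma, V) \ge m(g, V)$ — this is precisely hypothesis $(\star\star)_0$ of Theorem~\ref{thm:positive}, with $V$ playing the role of $V_0$. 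Theorem~\ref{thm:positive} would then yield a static potential $\widetilde V$ of $(M, g)$ with $\widetilde V - V = O(r^{-d})$, in particular $\widetilde V > 0$ near infinity, and with $\widetilde V > 0$ \emph{everywhere} in $M$, hence $\widetilde V > 0$ on $\Sigma$. But $\Sigma$ is a locally outermost, locally area-minimizing minimal hypersurface, so Lemma~\ref{lem:nonnegative static potential}, applied to $(M, g)$ with the static potential $\widetilde V$ (which is positive near infinity), gives $\widetilde V = 0$ on $\Sigma$ — a contradiction.

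Hence no such $\epsilon_0$ exists, which, unwinding the definition of $\mathcal{U}_{\star\star}^0$, says exactly that for every $\epsilon > 0$ there is an ALH metric $\gamma$ with $\|\gamma - g\|_{\C^{2,\alpha}_{-q}(M)} < \epsilon$, $H_\gamma \le 0$, $R_\gamma = R_g = -n(n-1)$, and $m(\gamma, V) < m(g, V)$ — the assertion of the corollary. The computation is negligible; the only genuine care needed is bookkeeping with the mass functional, namely checking that the WCH mass integral $m(\cdot, V)$ in the corollary coincides with the one in $(\star\star)_0$ (automatic once $(M,g)$ is regarded as asymptotic to its own static reference with potential $V$) and that the potential $\widetilde V$ from Theorem~\ref{thm:positive}, being asymptotic to $V$, is itself positive near infinity so that Lemma~\ref{lem:nonnegative static potential} genuinely applies to $\widetilde V$ and not merely to $V$. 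That $\mathcal{U}$ may be taken inside an arbitrarily small ball is immediate from its definition, so the $\epsilon$ in the corollary is unrestricted.
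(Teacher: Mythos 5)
Your argument is correct and is essentially the paper's own: the corollary is obtained by contraposition from Theorem~\ref{thm:positive} (whose item~\eqref{item:positive} forces the static potential produced under $(\star\star)_0$ to be positive everywhere, in particular on $\Sigma$) combined with Lemma~\ref{lem:nonnegative static potential} (which forces any static potential positive near infinity to vanish on a locally outermost, locally area-minimizing minimal boundary). Your care in applying the lemma to the potential $\widetilde V$ furnished by Theorem~\ref{thm:positive}, rather than to the originally given $V$, is exactly the right bookkeeping.
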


The consequences obtained in Theorem~\ref{thm:positive} enable us to establish static uniqueness. There are several existing static uniqueness for ALH manifolds  assuming various boundary conditions by, for example, X. Wang, G.~Galloway and E.~Woolgar, P.~Chru\'sciel, G.~Galloway, and Y.~Potaux~\cite{Wang:2005aa,Galloway:2015tj,Chrusciel:2020wg}.  It appears that the boundary conditions which naturally arise in our mass minimizing problem are different. See Corollary~\ref{corollary:static}. Together with the static uniqueness, Theorem~\ref{thm:positive} gives the following consequence.

\begin{Theorem}\label{thm:uniqueness}
Let $(\M, \mb)$ be a static, reference metric with a static potential $V_0>0$ near infinity.	Let $(M,g)$ be an ALH manifold  with respect to $(\M, \mb)$ possibly with nonempty boundary. Suppose $(M, g)$ has zero mass $m(g, V)=0$ and is a mass minimizer in the following sense:
\begin{itemize}
\item If $M$ has no boundary, we assume ($\star\star\star$) to hold. 
\item If $M$ has boundary $\Sigma$, we assume $H_g\le n-1$ and $(\star\star)_{n-1}$ to hold, i.e. letting $H_0 = n-1$ in the assumption $(\star\star)_{H_0}$. 
\end{itemize}
Then we must have $k=1, 0$, and $(M, g)$ is characterized as follows:
\begin{enumerate}
\item $k=1$: $(M, g)$ is the standard hyperbolic space without  boundary.  
\item  $k=0$: $(M, g)$ is isometric to a Birmingham-Kottler manifold $ \left([1,\infty)\times N, r^{-2} dr^2 + r^2 h\right)$ whose conformal infinity $(N, h)$ is Ricci flat. (In particular, $g$ itself is Poincar\'e-Einstein and the boundary has mean curvature $H_g=n-1$.)
\end{enumerate}
\end{Theorem}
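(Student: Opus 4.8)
The plan is to derive Theorem~\ref{thm:uniqueness} by feeding its hypotheses into Theorem~\ref{thm:positive} (when $\partial M \neq \emptyset$) or into the boundaryless analogue of Theorem~\ref{thm:static1} under $(\star\star\star)$, then combining the output with the static uniqueness statement Corollary~\ref{corollary:static}, and finally reading off which reference model the zero mass hypothesis selects. In the first step, if $M$ is boundaryless then $(\star\star\star)$ puts us in the boundaryless version of Theorem~\ref{thm:static1}, so $(M,g)$ is static with potential $V$ satisfying $V - V_0 = O(r^{-d})$; as $V_0 > 0$ near infinity and $\Delta_g V = -\tfrac{1}{n-1} R_g V$ (trace of the static equation), the maximum principle gives $V > 0$ on $M$. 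If $M$ has boundary $\Sigma$, then $H_g \le n-1$ together with $(\star\star)_{n-1}$ is precisely $(\star\star)_{H_0}$ with $H_0 = n-1$, so Theorem~\ref{thm:positive} applies and yields: $(M,g)$ static with potential $V$, $V - V_0 = O(r^{-d})$, $V > 0$ on $M$, $V A_g = \nu(V) g^\intercal$ on $\Sigma$, and $H_g = n-1$ on $\Sigma$. With $V$ in hand the hypothesis $m(g,V)=0$ becomes available.

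Next I would normalize the scalar curvature and the boundary. Since $V$ is a nowhere-vanishing static potential, applying the contracted second Bianchi identity to the static equation gives $V\,dR_g = 0$, hence $R_g$ is constant, and the ALH asymptotics ($R_g \to -n(n-1)$) force $R_g \equiv -n(n-1)$. When $\Sigma \neq \emptyset$, tracing $VA_g = \nu(V)g^\intercal$ gives $VH_g = (n-1)\nu(V)$, so $H_g = n-1$ forces $\nu(V) = V$ and therefore $A_g = g^\intercal$ on $\Sigma$: the boundary is totally umbilic with every principal curvature equal to $1$. At this point $(M,g)$ is a static ALH manifold with $R_g = -n(n-1)$, a globally positive static potential asymptotic to $V_0$, zero mass, and (if nonempty) a boundary with $A_g = g^\intercal$, $H_g = n-1$ --- exactly the input of Corollary~\ref{corollary:static}, which (via a conformal compactification and a Wang / Chru\'sciel--Galloway--Potaux type uniqueness argument with the boundary terms handled under these new conditions) identifies $(M,g)$ with a static reference model of vanishing mass.

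It then remains to run through $k \in \{-1,0,1\}$ using the explicit model geometry. For $k=-1$ the model is a generalized Kottler metric $ds^2 + \cosh^2 s\, h$, which has two ALH ends (excluding the boundaryless case) and whose compact cross sections have second fundamental form $\tanh s \cdot g^\intercal$ with $\tanh s < 1$, so none can carry the boundary datum $A_g = g^\intercal$ (excluding the boundary case); alternatively, Lemma~\ref{lem:nonnegative static potential} forces $V = 0$ on its totally geodesic neck, contradicting $V > 0$. Hence $k \neq -1$. For $k=1$ the model is $\mathbb{H}^n$, whose only compact totally umbilic hypersurfaces are geodesic spheres with $A_g = \coth(\rho)\, g^\intercal$ and $\coth\rho > 1$; since $A_g = g^\intercal$ is then impossible, $\Sigma = \emptyset$ and $(M,g) = \mathbb{H}^n$. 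For $k=0$ the zero mass condition selects the $m=0$ member of the generalized Kottler family, namely $\big([1,\infty)\times N,\, r^{-2}dr^2 + r^2 h\big)$ with $(N,h)$ Ricci flat --- this $g$ is Poincar\'e--Einstein and the cross section $\{r=1\}$ has mean curvature $n-1$, consistent with the first step --- while positive-mass members are black holes with a minimal ($H_g = 0$) boundary and are ruled out by $H_g = n-1$.

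The genuine obstacle is Corollary~\ref{corollary:static}: proving static uniqueness under the boundary conditions $A_g = g^\intercal$, $H_g = n-1$ produced by Theorem~\ref{thm:positive}, which differ from those in existing ALH static uniqueness theorems. One must choose the conformal factor so that the boundary contribution to the relevant Pohozaev/mass identity is controlled, show it vanishes exactly on the models, and deal with the regularity of $g$ and $V$ up to $\Sigma$ and at the conformal infinity. By contrast, the remaining work above --- producing $V$, pinning down $R_g$, deriving $A_g = g^\intercal$, and the model-by-model check --- is routine once Theorem~\ref{thm:positive} and Corollary~\ref{corollary:static} are available.
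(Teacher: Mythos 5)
Your proposal is correct and follows essentially the same route as the paper: Theorem~\ref{thm:positive} (resp.\ the boundaryless version of Theorem~\ref{thm:static1} plus the maximum principle) produces the positive static potential together with $A_g=g^\intercal$, $H_g=n-1$, $V=\nu(V)$ on $\Sigma$, and the static uniqueness statement then gives the classification. The one cosmetic remark is that the ``obstacle'' you single out is precisely the paper's Corollary~\ref{corollary:static}, which is proved not by a conformal compactification argument but by the Shen--Wang identity \eqref{eq:mass formula} combined with Lemma~\ref{lemma:boundary} to annihilate the boundary term, yielding the Obata equation $\nabla^2 V=Vg$ and then the warped-product splitting of Lemma~\ref{lemma:Obata}, which already contains your model-by-model elimination of $k=-1$ and of $k=1$ with boundary.
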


We can put Theorem~\ref{thm:uniqueness} in the more concrete context by using the established positivity in the positive mass theorems.  In the case of asymptotically hyperbolic manifolds,  the reference manifold is the standard hyperbolic space, whose space of static potentials is spanned by  $\{ \sqrt{r^2+1} , x_1, \dots, x_n\}$. They give  the mass integrals 
\[
p_0 = m(g, \sqrt{r^2+1} )\quad \mbox{ and }\quad p_i= m(g, x_i),\quad i=1,\dots, n. 
\]
There has been much progress toward the positive mass theorem for asymptotically hyperbolic manifolds, by X. Wang~\cite{Wang:2001ww}, P. Chru\'sciel~and M.~Herzlich~\cite{Chrusciel:2003aa}, L.~Andersson, M.~Cai, and G.~Galloway~\cite{Andersson:2008vy}, Chru\'sciel and E.~Delay~\cite{Chrusciel.2019}, D. Martin with the authors~\cite{Huang:2020tm},  A.~Sakovich~\cite{Sakovich:2021vu}, and Chru\'sciel-Galloway~\cite{Chrusciel:2021}. The combined efforts give the following statement. 
\begin{Theorem}[\cite{Wang:2001ww,Chrusciel:2003aa,Andersson:2008vy,Chrusciel.2019,Chrusciel:2021,Huang:2020tm,Sakovich:2021vu}]\label{thm:pmt1}
	Let  $3\le n\le 7$ and $(M,g)$ be an $n$-dimensional asymptotically hyperbolic manifold (possibly with nonempty boundary)\footnote{For the case of boundaryless $M$, the theorem also holds  for spin manifolds in dimensions $n\ge 3$. The spin assumption can be removed if the spacetime positive mass theorem for asymptotically flat initial data sets holds in those dimensions (see \cite{Lohkamp:2017tg}).} with scalar curvature $R_g\ge -n(n-1)$. In the case that $M$ has a nonempty boundary $\Sigma$, we assume $\Sigma$ has mean curvature $H_g\le n-1$. Then $p_0 \ge \sqrt{p_1^2+\dots + p_n^2}$. 
	
If $(M, g)$ has no boundary and $p_0 = \sqrt{p_1^2+\dots + p_n^2}$, then $(M, g)$ is isometric to hyperbolic space. 
\end{Theorem}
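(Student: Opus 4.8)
This statement is a synthesis of the cited papers, so the plan is not to reprove any one of them but to indicate how they assemble, splitting into the inequality for $\partial M=\emptyset$, the inequality for $\partial M=\Sigma\ne\emptyset$, and the rigidity. For the \emph{boundaryless inequality}: when $n=3$, and more generally on spin manifolds, $p_0\ge|\vec p|$ is the Witten-type spinor estimate of Wang~\cite{Wang:2001ww} and Chru\'sciel--Herzlich~\cite{Chrusciel:2003aa}, with side hypotheses removed by Andersson--Cai--Galloway~\cite{Andersson:2008vy}. For general $3\le n\le 7$ without spin I would follow Sakovich~\cite{Sakovich:2021vu}: under the umbilic slicing $K=-g$, the triple $(M,g,-g)$ is an asymptotically hyperbolic initial data set satisfying the dominant energy condition ($\mu=\tfrac12\big(R_g+n(n-1)\big)\ge 0$ and $J\equiv 0$) whose energy--momentum reproduces $(p_0,\vec p)$ up to sign; solving a Jang-type equation and running the Schoen--Yau minimal-surface argument reduces $p_0\ge|\vec p|$ to the spacetime positive energy theorem for such data, which is available in these dimensions precisely because the relevant trapped-surface regularity is (cf.\ the discussion near \cite{Lohkamp:2017tg}). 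The sharp decay and mass-aspect refinements come from Chru\'sciel--Delay~\cite{Chrusciel.2019}, Chru\'sciel--Galloway~\cite{Chrusciel:2021}, and the authors' earlier work with Martin~\cite{Huang:2020tm}.

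For the \emph{boundary inequality}, keep $K=-g$. On $\Sigma$ the outward (toward-infinity) null expansion is $\theta^+=H_g+\tr_\Sigma K=H_g-(n-1)$, so the hypothesis $H_g\le n-1$ says exactly that $\Sigma$ is a weakly outer-trapped inner boundary for $(M,g,-g)$; thus $p_0\ge|\vec p|$ follows from the asymptotically hyperbolic spacetime positive mass theorem with MOTS/trapped boundary --- the analogue in this setting of the result of Lee--Lesourd--Unger~\cite{Lee-Lesourd-Unger:2021}. Alternatively, closer to the viewpoint of the present paper, one can glue onto $\Sigma$ a compact filling with scalar curvature $\ge -n(n-1)$ that realizes the boundary metric $g^\intercal$ and has boundary mean curvature at least that of a horosphere ($n-1\ge H_g$); the resulting end-complete manifold has distributional scalar curvature $\ge -n(n-1)$ across the gluing, and a Miao-type corner smoothing (or an application of Theorem~\ref{thm:surjectivity0}) turns it, for each $\delta>0$, into a smooth boundaryless manifold with $R\ge -n(n-1)$, the same ALH end, and mass changed by less than $\delta$; letting $\delta\to 0$ and invoking the boundaryless case finishes.

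For the \emph{rigidity} ($\partial M=\emptyset$, $p_0=|\vec p|$): this is proven in \cite{Huang:2020tm,Chrusciel:2021}. Equality first forces the full mass vector $(p_0,\dots,p_n)$ to vanish --- in the spin range via the imaginary Killing spinor produced by the equality case of Witten's estimate, whose existence on a complete ALH manifold with these asymptotics forces sectional curvature $-1$, and in general via the MOTS rigidity underlying the positive energy theorem. One can then conclude directly that $(M,g)$ is isometric to $\mathbb H^n$; alternatively, in the spirit of the present paper, once every hyperbolic mass integral of $g$ vanishes the inequality already proven shows that $(M,g)$ minimizes $m(\cdot,\sqrt{r^2+1})$ among scalar-curvature-preserving competitors (each such $\gamma$ again has $R_\gamma\ge -n(n-1)$, hence $p_0(\gamma)\ge|\vec p(\gamma)|\ge 0=p_0(g)$), so $(M,g)$ is static with zero mass and classical static uniqueness --- the $k=1$ case of Theorem~\ref{thm:uniqueness} --- identifies it with $\mathbb H^n$.

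The hardest part is the boundary inequality. The MOTS/Jang route carries the full weight of existence and regularity for trapped boundaries up to dimension $7$ --- which is also the only source of the restriction $n\le 7$ in the non-spin case --- and requires matching the Jang-modified data to the standard asymptotically hyperbolic model so that its energy--momentum equals $(p_0,\vec p)$. The gluing-and-smoothing alternative instead requires producing a compact filling of an arbitrary boundary metric with the stated curvature and mean-curvature bounds, then controlling quantitatively the mass change under the corner mollification --- i.e.\ a density argument compatible with the decay hypotheses of each cited positive mass theorem. Reconciling the slightly different asymptotic and regularity conventions across \cite{Wang:2001ww,Chrusciel:2003aa,Andersson:2008vy,Chrusciel.2019,Chrusciel:2021,Sakovich:2021vu,Huang:2020tm} is a further, more clerical, hurdle.
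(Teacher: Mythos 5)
The paper itself offers no proof of Theorem~\ref{thm:pmt1}: it is stated as a quotation of the combined literature, with the division of labor exactly as you describe (spinor methods of Wang and Chru\'sciel--Herzlich, the non-spin arguments of Andersson--Cai--Galloway and Chru\'sciel--Delay, the Jang-equation route of Sakovich, and the rigidity analysis of Chru\'sciel--Galloway and of the authors with Martin). So there is no internal argument to compare yours against, and your assembly is consistent with the paper's attributions. Two small corrections to the road map: Sakovich's Jang-equation proof is for $n=3$, not for general $3\le n\le 7$; the non-spin higher-dimensional cases rest on Andersson--Cai--Galloway and on Chru\'sciel--Delay's deformation to exactly hyperbolic asymptotics combined with the asymptotically flat theorem, which is also where the restriction $n\le 7$ enters. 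Also, in the rigidity step the equality $p_0=\lvert\vec p\rvert$ a priori only makes the mass vector null; showing it actually vanishes (rather than being null and nonzero) is part of what the cited rigidity arguments must establish, e.g.\ in \cite{Huang:2020tm} this is extracted from the static potential produced by the mass-minimization argument, which is indeed the mechanism this paper generalizes in Theorems~\ref{thm:static1} and~\ref{thm:uniqueness}. Your second, ``in the spirit of the present paper'' route to rigidity is essentially the argument of \cite{Huang:2020tm} that the present paper extends to the boundary case, so that part matches the authors' intent precisely.
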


Theorem~\ref{thm:uniqueness} and Theorem~\ref{thm:pmt1} give the following consequence. 

\begin{Theorem}\label{thm:rigidity1}
Let $(M, g)$ satisfy the assumptions in Theorem~\ref{thm:pmt1}. If $p_0 = \sqrt{p_1^2+\dots + p_n^2}$, then $(M, g)$ has no boundary and thus is isometric to hyperbolic space. 
\end{Theorem}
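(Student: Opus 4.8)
The plan is to obtain Theorem~\ref{thm:rigidity1} by feeding the positive mass inequality of Theorem~\ref{thm:pmt1} into the classification of Theorem~\ref{thm:uniqueness}. If $(M,g)$ has empty boundary, Theorem~\ref{thm:pmt1} already asserts that $p_0 = \sqrt{p_1^2 + \dots + p_n^2}$ forces $(M,g)$ to be hyperbolic space, so there is nothing further to prove. Assume therefore, toward a contradiction, that $M$ has nonempty boundary $\Sigma$, so that (by the hypotheses of Theorem~\ref{thm:pmt1}) $H_g \le n-1$ and $R_g \ge -n(n-1)$, and that $p_0 = \sqrt{p_1^2 + \dots + p_n^2}$. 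Here the reference is the standard hyperbolic space, whose static potentials span $\{\sqrt{r^2+1}, x_1, \dots, x_n\}$ (in the hyperboloid model these are coordinates with $(\sqrt{r^2+1})^2 - \sum_i x_i^2 = 1$), and $m(g,\cdot)$ is linear on this space with $m(g,\sqrt{r^2+1}) = p_0$ and $m(g,x_i) = p_i$. I will apply Theorem~\ref{thm:uniqueness} with $H_0 = n-1$: its conclusion in the asymptotically hyperbolic case ($k=1$) is that $(M,g)$ is hyperbolic space \emph{without} boundary, which contradicts $\Sigma \ne \emptyset$ and completes the proof.

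To invoke Theorem~\ref{thm:uniqueness} I must produce a static potential $V_0$ of the reference with $V_0 > 0$ near infinity and $m(g,V_0) = 0$, and verify the mass-minimizing hypothesis $(\star\star)_{n-1}$. If $p_1 = \dots = p_n = 0$ (so also $p_0 = 0$), take $V_0 = \sqrt{r^2+1}$; otherwise take $V_0 = \sqrt{r^2+1} - p_0^{-1}\sum_{i=1}^n p_i x_i$. By linearity of $m(g,\cdot)$ this has $m(g,V_0) = p_0 - p_0^{-1}\sum_i p_i^2 = 0$ (using $\sum_i p_i^2 = p_0^2$), and it is strictly positive on the reference, hence near infinity: on the hyperboloid $\sum_i x_i^2 = (\sqrt{r^2+1})^2 - 1$, while Cauchy--Schwarz together with $p_0^{-2}\sum_i p_i^2 = 1$ give $p_0^{-1}\sum_i p_i x_i \le (\sum_i x_i^2)^{1/2} < \sqrt{r^2+1}$. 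For $(\star\star)_{n-1}$: if $\gamma \in \mathcal{U}_{\star\star}^{n-1}$ has $R_\gamma = R_g$, then $\gamma$ is asymptotically hyperbolic with $R_\gamma \ge -n(n-1)$ and $H_\gamma \le n-1$, so Theorem~\ref{thm:pmt1} gives $p_0(\gamma) \ge (\sum_i p_i(\gamma)^2)^{1/2}$; then by linearity and Cauchy--Schwarz, $m(\gamma,V_0) = p_0(\gamma) - p_0^{-1}\sum_i p_i p_i(\gamma) \ge p_0(\gamma) - p_0^{-1}(\sum_i p_i^2)^{1/2}(\sum_i p_i(\gamma)^2)^{1/2} = p_0(\gamma) - (\sum_i p_i(\gamma)^2)^{1/2} \ge 0 = m(g,V_0)$ (and trivially $m(\gamma,V_0) = p_0(\gamma) \ge 0$ when $V_0 = \sqrt{r^2+1}$). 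Hence $(\star\star)_{n-1}$ holds, with equality realized at $g$, and Theorem~\ref{thm:uniqueness} applies and delivers the contradiction.

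The point that I expect to require the most care is the choice of $V_0$ in the borderline case where the ``mass covector'' $(p_0,\dots,p_n)$ is null and nonzero. There the only future-causal static potential pairing to zero with it is, up to scaling, the null one written above, and that $V_0$ — while strictly positive at every point of the reference, and hence near infinity in $M$ — decays to $0$ along one direction at conformal infinity rather than being bounded below there. I would need to confirm this is consistent with the standing hypothesis ``$V_0 > 0$ near infinity'' in Theorems~\ref{thm:positive}--\ref{thm:uniqueness} (it should be, as those statements ask only for pointwise positivity of $V_0$ on the manifold, not for a uniform positive lower bound at infinity), and to match the sign and normalization conventions between the positive mass inequality of Theorem~\ref{thm:pmt1} and the mass functional $m(g,V_0)$ of Definition~\ref{def:massfunctional} so that ``future-causal mass covector'' really does translate into $m(\gamma,V_0)\ge 0$ for the chosen $V_0$. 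Beyond these bookkeeping matters, the argument is a direct concatenation of Theorem~\ref{thm:pmt1} and Theorem~\ref{thm:uniqueness}.
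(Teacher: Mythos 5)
Your proposal is correct and is essentially the paper's own derivation: Theorem~\ref{thm:rigidity1} is stated as a direct consequence of Theorem~\ref{thm:uniqueness} and Theorem~\ref{thm:pmt1}, and your choice of the null static potential $V_0=\sqrt{r^2+1}-p_0^{-1}\sum_i p_i x_i$ together with the Cauchy--Schwarz verification of $(\star\star)_{n-1}$ supplies exactly the details the paper leaves implicit. The point you flag---that in the borderline null case $V_0$ is pointwise positive but decays to zero along one direction at infinity---is a genuine subtlety, but it concerns the hypotheses of Theorems~\ref{thm:positive} and~\ref{thm:uniqueness} as literally stated (which your $V_0$ satisfies), so it does not introduce a gap in this derivation beyond what is already present in those theorems.
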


Positivity of the mass for general ALH manifolds seems to be less understood (perhaps more intriguing at the same time) in light of examples of the generalized Kottler metrics with negative mass when $k=-1$.  Assuming the conformal infinity is either the flat torus or a nontrivial quotient of the unit sphere, the following positivity of mass is proven by Chru\'sciel, Galloway, L.~Nguyen, and T.~Paetz \cite[Theorem 1.1]{Chrusciel:2018vp} (see also the remark from \cite[Theorem VI.4]{Chrusciel:2020wg} on the mean curvature assumption).
\begin{Theorem}[Chru\'sciel-Galloway-Nguyen-Paetz~\cite{Chrusciel:2018vp}]\label{thm:pmt0}
	Let  $3\le n\le 7$. Let  $(M,g)$ be an $n$-dimensional ALH manifold with conformal infinity $(N, h)$ and $M$ be diffeomorphic to $[1, \infty)\times N$.  Suppose that: 
	\begin{enumerate}
		\item The  boundary $\Sigma = \{ 1\} \times N$ has mean curvature $H_g\le n-1$.
		\item The scalar curvature satisfies $R_g\ge -n(n-1)$.
		\item The conformal infinity $(N,h)$ is either a flat torus or a nontrivial quotient of the standard sphere. Define the mass $m(g) =m(g, \sqrt{r^2})$ (may be up to a positive normalizing constant) in the former case and $m(g)=m(g, \sqrt{r^2+1})$ in the later case. 
		\item When $n=3$, assume the mass aspect function has a sign. 
	\end{enumerate}
Then $m(g)\ge 0$.
\end{Theorem}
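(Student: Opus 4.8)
The plan is to prove $m(g)\ge 0$ by reducing the ALH positive mass theorem to the positive mass theorem for asymptotically flat (AF) initial data sets, which is available precisely in the range $3\le n\le 7$ and which also accommodates a mean-convex inner boundary. First I would record the mass as a Wang--Chru\'sciel--Herzlich flux integral: writing the end in the model coordinates $(r,\theta)\in[1,\infty)\times N$, the integrals over $\{r=R\}$ converge as $R\to\infty$ to $m(g,V_0)$, with $V_0=r$ in the flat-torus case and $V_0=\sqrt{r^2+1}$ in the spherical-quotient case, and in each case the integrand is a linear functional of the mass aspect function on $(N,h)$. The two cases differ only through the choice of $V_0$ and the structure of $N$; the analytic engine is identical.

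The heart of the argument is a deformation of $(M,g)$ into asymptotically flat initial data $(\overline M,\bar g,\bar k)$ for the Einstein constraints whose dominant energy condition is inherited from the bound $R_g\ge -n(n-1)$. Concretely I would solve a hyperbolic Jang-type equation over the ALH end, following the reduction used for the asymptotically hyperbolic positive mass theorem (cf.~\cite{Sakovich:2021vu}): the graph of the solution, suitably conformally rescaled, carries an AF metric with $R\ge 0$ (equivalently, satisfies the dominant energy condition), and the ALH mass $m(g,V_0)$ is identified, up to a fixed positive constant, with a component of the ADM energy-momentum of $(\overline M,\bar g,\bar k)$. The choice of $V_0$ dictates which direction of the energy-momentum is being measured, which is why the flat-torus normalization $V_0=r$ and the spherical-quotient normalization $V_0=\sqrt{r^2+1}$ produce the correct lightlike- and timelike-type components respectively.

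The inner boundary is controlled by the mean-curvature hypothesis. With $\nu$ pointing to infinity and $H_g\le n-1$, the boundary $\Sigma$ becomes weakly outer-trapped (mean-convex) for the reduced data, so it acts as a barrier for the minimal hypersurfaces or MOTS used in the spacetime positive mass theorem and contributes no negative boundary term. Invoking the AF positive mass theorem with boundary in dimensions $3\le n\le 7$ --- whose proof rests on the regularity of area-minimizing hypersurfaces, hence the dimensional restriction --- then yields nonnegativity of the ADM energy, and therefore $m(g)\ge 0$. For $n=3$ the graphical reduction degenerates at lowest order, and instead I would run a weak inverse mean curvature flow with Geroch monotonicity; the supplementary hypothesis that the mass aspect function has a sign is exactly what is needed to guarantee that the Hawking-type quantity converges to a nonnegative multiple of $m(g)$, controlling the otherwise indefinite boundary term in the monotonicity formula.

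I expect the main obstacle to be the asymptotic analysis of the Jang-type equation in the ALH setting: showing that the deformed end is genuinely asymptotically flat with the expected decay, and that the transformed mass equals a positive multiple of $m(g)$ with no anomalous contribution through the end. This is where the special structure of the conformal infinity is indispensable: for a flat torus or a nontrivial spherical space form the blow-down of the end is compatible with the AF model, whereas for a generic $(N,h)$ the reduction would retain extra curvature of $N$ that obstructs the identification. Verifying the compatibility of the scalar- and mean-curvature bounds with the conformal weight term by term, and ruling out any loss of mass through the end, is the technical crux.
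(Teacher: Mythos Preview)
The paper does not prove this statement. Theorem~\ref{thm:pmt0} is quoted from Chru\'sciel--Galloway--Nguyen--Paetz~\cite{Chrusciel:2018vp} and used as a black-box input: combined with the paper's own Theorem~\ref{thm:uniqueness} it yields the rigidity result Theorem~\ref{thm:rigidity0}. There is therefore no proof in the present paper to compare your proposal against.

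As for the proposal itself, there is a genuine gap. The Jang-type reduction you outline, patterned on \cite{Sakovich:2021vu}, is designed for the case where the conformal infinity is the round $(n-1)$-sphere, so that the deformed end becomes a standard asymptotically Euclidean end. When $(N,h)$ is a flat torus the cross-sections of the end remain tori after any such deformation, so the resulting data are not asymptotically flat in the sense needed to invoke the AF positive mass theorem with boundary; when $(N,h)$ is a nontrivial spherical space form one must at least pass to a covering, and the relation between the mass upstairs and downstairs has to be established. You correctly identify this as ``the main obstacle,'' but you do not resolve it, and it is the crux rather than a technicality. The argument in \cite{Chrusciel:2018vp} proceeds differently: it deforms the metric near infinity to coincide with the reference model (at controlled cost to the scalar curvature) and then exploits the special topology of $N$ --- via doubling/compactification together with obstructions to positive scalar curvature, or via passage to covers --- rather than reducing to a spacetime AF positive mass theorem.
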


Theorem~\ref{thm:uniqueness} and Theorem~\ref{thm:pmt0} give the following consequence. Note that the following theorem implicitly implies that if $(M, g)$ has boundary with mean curvature $\le n-1$ and if $(N, h)$ is a nontrivial quotient of the standard sphere, then necessarily $m(g)>0$.  

\begin{Theorem}\label{thm:rigidity0}
Let $(M, g)$ satisfy the assumptions in Theorem~\ref{thm:pmt0}. If $m(g)=0$, then $(M, g)$ must be a locally hyperbolic manifold whose conformal infinity $(N, h)$ is the flat torus. Namely, $(M, g)$ is isometric to $\left( [1,\infty)\times N, r^{-2} dr^2 + r^2 h\right)$. 
\end{Theorem}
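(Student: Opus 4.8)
\textbf{Proof proposal for Theorem~\ref{thm:rigidity0}.}

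The plan is to combine the positivity statement of Theorem~\ref{thm:pmt0} with the rigidity characterization of Theorem~\ref{thm:uniqueness} to pin down the equality case. First I would observe that the hypotheses of Theorem~\ref{thm:pmt0}---namely $R_g \ge -n(n-1)$, the boundary mean curvature bound $H_g \le n-1$, and $(N,h)$ being a flat torus or a nontrivial spherical quotient---are precisely the standing hypotheses that let us invoke Theorem~\ref{thm:pmt0} to conclude $m(g) \ge 0$, and the assumption $m(g) = 0$ then says $(M,g)$ sits at the bottom of the mass functional. The key point is to verify that the equality case $m(g) = 0$ forces $(M,g)$ to be a mass minimizer in the sense of $(\star\star)_{n-1}$, i.e. that $m(\gamma, V_0) \ge m(g, V_0) = 0$ for all competitors $\gamma \in \mathcal{U}_{\star\star}^{n-1}$ with $R_\gamma = R_g$. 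This is where positivity of mass is used a second time: any such competitor $\gamma$ is again an ALH metric on $[1,\infty)\times N$ with $R_\gamma \ge -n(n-1)$ (indeed $R_\gamma = R_g \ge -n(n-1)$) and $H_\gamma \le n-1$, so Theorem~\ref{thm:pmt0} applies to $\gamma$ and gives $m(\gamma, V_0) \ge 0 = m(g, V_0)$. Thus $(M,g)$ satisfies the mass minimizing hypothesis of Theorem~\ref{thm:uniqueness} with $H_0 = n-1$.

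Having established that $(M,g)$ is a zero-mass minimizer in the required sense, I would then apply Theorem~\ref{thm:uniqueness} directly. That theorem tells us $k \in \{0, 1\}$ and gives the two possible characterizations: if $k = 1$, then $(M,g)$ is hyperbolic space with no boundary; if $k = 0$, then $(M,g)$ is isometric to a Birmingham-Kottler manifold $([1,\infty)\times N, r^{-2}dr^2 + r^2 h)$ with $(N,h)$ Ricci flat. To finish, I would rule out the $k = 1$ case under the hypotheses of Theorem~\ref{thm:pmt0}: here $M$ is diffeomorphic to $[1,\infty)\times N$ with nonempty boundary $\Sigma = \{1\}\times N$, whereas the $k=1$ conclusion of Theorem~\ref{thm:uniqueness} yields hyperbolic space \emph{without} boundary, a contradiction with the standing assumption that $\Sigma$ is nonempty. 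Hence only $k = 0$ survives, and moreover the conformal infinity in the $k=0$ reference manifold is flat, so $(N,h)$ must be the flat torus (it cannot be a spherical quotient, since a nontrivial spherical quotient is not Ricci flat). This gives exactly the stated conclusion: $(M,g)$ is the locally hyperbolic manifold $([1,\infty)\times N, r^{-2}dr^2 + r^2 h)$ with $(N,h)$ the flat torus.

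The main obstacle I anticipate is the bookkeeping needed to match the normalization of the mass functional in Theorem~\ref{thm:pmt0} with the static-potential conventions in Theorem~\ref{thm:uniqueness}: Theorem~\ref{thm:pmt0} defines $m(g)$ using $V_0 = \sqrt{r^2}$ in the flat-torus case and $V_0 = \sqrt{r^2+1}$ in the spherical-quotient case (possibly up to a positive constant), and one must check these coincide with the static potentials $V_0 > 0$ near infinity appearing in the reference manifolds $k = 0$ and $k = 1$ of the preceding theorems, so that ``$m(g) = 0$'' is literally the hypothesis ``$m(g, V) = 0$'' of Theorem~\ref{thm:uniqueness}. A secondary point to handle carefully is the $n = 3$ caveat in Theorem~\ref{thm:pmt0} (the mass aspect function having a sign): this hypothesis is inherited from the assumptions of Theorem~\ref{thm:pmt0} that we are assuming throughout, and it should be checked that it does not obstruct applying Theorem~\ref{thm:pmt0} to the competitor metrics $\gamma$, which it does not, since $\gamma$ is $\C^{2,\alpha}_{-q}$-close to $g$ and hence inherits a sign for its mass aspect function when $g$ has one. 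Apart from these normalization checks, the argument is a short two-step deduction.
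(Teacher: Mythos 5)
Your proposal is correct and follows essentially the same route as the paper, which derives Theorem~\ref{thm:rigidity0} as a direct consequence of Theorem~\ref{thm:uniqueness} and Theorem~\ref{thm:pmt0}: apply the positivity of Theorem~\ref{thm:pmt0} to nearby competitors to verify the mass-minimizing hypothesis $(\star\star)_{n-1}$, invoke Theorem~\ref{thm:uniqueness}, rule out $k=1$ via the nonempty boundary, and exclude the spherical quotient because it is not Ricci flat. The only place you go beyond the paper is the $n=3$ mass-aspect caveat, and your specific justification there is shaky --- $\C^{2,\alpha}_{-q}$-closeness with $q<n$ does not control the $r^{-n}$ mass-aspect coefficient of a competitor --- though the paper's own one-line proof does not address this point either.
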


\noindent{\bf Acknowledgement.} We thank Zhongshan An for reading Appendix~\ref{sec:af} and for valuable comments. We are very grateful to the referee for thorough reading and for helpful comments to significantly improve the paper.

\section{Preliminaries}\label{se:pr}

%\begin{align*}
%	&\text{If } \kappa=0 \text{ or }-1,\text{ then } \mathcal{N}_{b_\kappa}=\text{span}\{\sqrt{\kappa+r^2}\},\\
%	&\text{If } \kappa=1\text{ and } (N_1,h_1)=(\mathbb{S}^{n-1},g_{\mathbb{S}^{n-1}}),\text{ then }\mathcal{N}_{b_\kappa}=\text{span}\{\sqrt{1+r^2},x^1,\ldots,x^n\}.
%\end{align*}

%	The mean curvatures of the $r$-level set with respect to the normal vector toward infinity are:
%	\begin{align*}
%		\text{For }\kappa=1,\quad &H_r=\frac{\sqrt{1+r^2}}{r}(n-1)>n-1,\\
%		\text{For }\kappa=0,\quad &H_r=n-1\\
%		\text{For }\kappa=-1,\quad &H_r=\frac{\sqrt{r^2-1}}{r}(n-1)<n-1.
%	\end{align*}

%	When $(N_1,h_1)$ is the unit sphere, $(M_1,b_1)$ is the standard hyperbolic space. 

%	For $\kappa = 0$, the space $M_0$ has the boundary $\{t=0\}(=\{r=1\})$ which is umbilic with mean curvature $n-1$. Moreover, there is no closed hypersurface in $M_0$ whose mean curvature is strictly less than $n-1$ because the $r$-level sets form a foliation of constant mean curvature $n-1$. In other words, the boundary is weakly outermost of mean curvature $n-1$ (see Definition \ref{def:weakly outermost}).

%	For $\kappa = -1$, the space $M_{-1}$ has the minimal hypersurface boundary $\{t=0\}(=\{r = 1\})$. In addition, the boundary is outermost minimal (hence stable minimal) and strictly outer-minimizing since the $r$-level sets for $r>1$ form a foliation of positive mean curvature.

In this section, we discuss background materials, definitions, and basic facts. Most parts of this section are already known to the experts. The main purpose is to set the stage for later sections, as we will frequently refer back to this section. We may include some proofs to extend classical results to our current setting of ALH manifolds. 

\subsection{Reference manifolds and Birmingham-Kottler metrics}

Let  $(N, h)$ be an  $(n-1)$-dimensional connected, closed (compact without boundary) Riemannian manifold. From it, we can define an $n$-dimensional manifold $\M:=(r_k, \infty)\times N$ with the metric 
\begin{align}\label{eq:reference1}
	\mb = \frac{1}{r^2+k} dr^2 + r^2 h,
\end{align}
where $k$ is a constant normalized so that $k\in \{ -1, 0, 1\}$. We also set  $r_k =0$ for $k=0, 1$, and $r_k=1$ for $k=-1$. By changing the coordinate  $s= \frac{2}{r+\sqrt{r^2+k}}$, the metric $\mb$  can be alternatively re-expressed as   
\[
	\mb= s^{-2} \Big(ds^2 + \big(1-\tfrac{k}{4} s^2\big)^2 h\Big).
\] 
 Because the metric $\overline{\mb} := ds^2 + \big(1-\tfrac{k}{4} s^2\big)^2 h$ can be trivially extended on $\{s=0\}\times N$ with the induced metric $h$ (which corresponds to the ``infinity'' $r=\infty$), the metric~$\mb$ is said to be \emph{conformally compact}, and the ``extended'' boundary-at-infinity  $(N, h)$  is called the \emph{conformal infinity}. One computes $|ds|_{\overline{\mb}}^2=1$, and thus a computation says that the metric~$\mb$ has asymptotically constant sectional curvature $-1$ as $s\to 0$ (see \cite[p. 192]{Graham:1991aa} and \cite{Mazzeo:1988ta}). For that reason,  the metric $\mb$ is said to be \emph{asymptotically locally hyperbolic} (or \emph{ALH} for short).

\vspace{6pt}
\noindent{\bf Convention:} We refer to $(\M, \mb)$ defined above as a \emph{reference manifold} (of type $k$ and with conformal infinity $(N, h)$). We denote by $B_r = (r_k, r] \times N$ and $S_r = \{ r \} \times N$ the closed subsets in $(\M, \mb)$ respectively as the coordinate ``ball'' and ``sphere'' (although they may not be topologically a ball nor a sphere).

We discuss some basic properties of a reference manifold and formulate them as ``examples'' below, so we can refer them later. 
\begin{example}[Reference manifolds in geodesic polar coordinates]\label{ex:1}
While the reference manifold written in the $r$-coordinate in~\eqref{eq:reference1} may seem to have singularity at $r=r_k$,  the manifold may be made complete (possibly after extension). To see this, we  change the $r$-coordinate in ~\eqref{eq:reference1} to the ``geodesic'' coordinate $t$:
\begin{itemize}
\item $k=1$. We let $t=\sinh^{-1}(r)$ and rewrite $\mb$ as 
\[
	\mb = dt^2+ (\sinh t)^2 h,
\] 
defined on $t\in (0, \infty)$. When the conformal infinity $(N, h)$ is the standard unit sphere, $(\M, \mb)$ is just the standard hyperbolic space and can be extended at $r=0$ (or $t=0$) as a complete manifold without boundary.
\item $k=0$. We let $t= \ln r$. After changing coordinate from $r$ to $t$,  
\[
	\mb = dt^2 + e^{2t} h 
\]
which is defined on $t\in (-\infty, \infty)$. The reference manifold has an ``expanding'' end as $t\to \infty$ (i.e. $r\to \infty)$ and the other end is ``shrinking'' as $t\to -\infty$ (i.e. $r\to 0$), which is usually called the hyperbolic cusp.  See Figure~\ref{figure1}.
\item $k=-1$. We let $t=\cosh^{-1}(r)$ and write the metric in the $t$-coordinate as $\mb=dt^{2}+(\cosh t)^{2}h$. The metric can be obviously extended from its original range  $t\in (0,\infty)$, i.e. $r\in (1,\infty)$, to $t\in (-\infty, \infty)$. The extended manifold 
\begin{align}\label{eq:extension}
	\M_{\mathrm{ext}} = (-\infty, \infty) \times N \quad \mbox{ and } \quad  \mb_{\mathrm{ext}} =dt^{2}+(\cosh t)^{2}h
\end{align}
is complete, has reflectional symmetry with respect to $t\mapsto -t$, and has two isometric ALH ends. The ``neck'' $\{ t=0\}$, i.e. $\{ r=1\}$, is a minimal hypersurface. See Figure~\ref{figure2}.
\end{itemize}
\qed
\end{example}

\begin{example}[Constant mean curvature foliations]\label{ex:CMC}
The reference manifold is foliated by $r$-level hypersurfaces of constant mean curvature. In fact, each $S_r$ is umbilic, and the second fundamental form is given by $A_g = \frac{\sqrt{r^2+k}}{r} \mb^\intercal$. In particular, the mean curvature $H_{S_r}$ of $S_r$ satisfies
\begin{itemize}
\item $k=1$: \quad $H_{S_r} = (n-1)\frac{\sqrt{r^2+1}}{r}$, strictly decreasing to $(n-1)$ as $r\to \infty$.\vspace{3pt}
\item $k=0$:\quad $H_{S_r}=n-1$ for any $r$. \vspace{3pt}
\item $k=-1$:\quad  $H_{S_r}=\frac{\sqrt{r^2-1}}{r}(n-1)$, strictly increasing to $(n-1)$ as $r\to \infty$.
\end{itemize}
\qed
\begin{figure}[ht]  
  \begin{minipage}{.45\textwidth}
    \def\svgwidth{2.6in}
    %% Creator: Inkscape 1.1 (c4e8f9e, 2021-05-24), www.inkscape.org
%% PDF/EPS/PS + LaTeX output extension by Johan Engelen, 2010
%% Accompanies image file '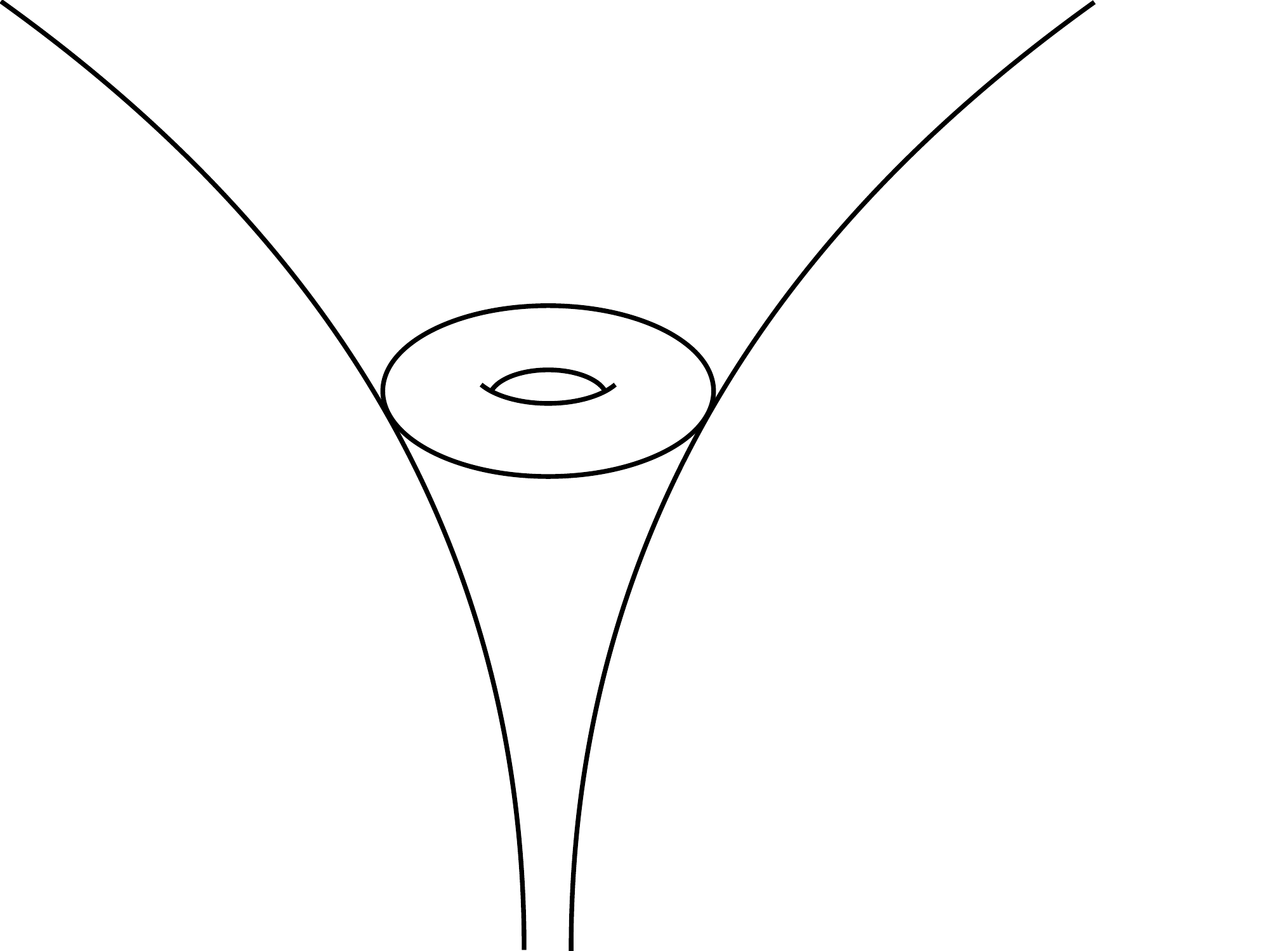' (pdf, eps, ps)
%%
%% To include the image in your LaTeX document, write
%%   \input{<filename>.pdf_tex}
%%  instead of
%%   \includegraphics{<filename>.pdf}
%% To scale the image, write
%%   \def\svgwidth{<desired width>}
%%   \input{<filename>.pdf_tex}
%%  instead of
%%   \includegraphics[width=<desired width>]{<filename>.pdf}
%%
%% Images with a different path to the parent latex file can
%% be accessed with the `import' package (which may need to be
%% installed) using
%%   \usepackage{import}
%% in the preamble, and then including the image with
%%   \import{<path to file>}{<filename>.pdf_tex}
%% Alternatively, one can specify
%%   \graphicspath{{<path to file>/}}
%% 
%% For more information, please see info/svg-inkscape on CTAN:
%%   http://tug.ctan.org/tex-archive/info/svg-inkscape
%%
\begingroup%
  \makeatletter%
  \providecommand\color[2][]{%
    \errmessage{(Inkscape) Color is used for the text in Inkscape, but the package 'color.sty' is not loaded}%
    \renewcommand\color[2][]{}%
  }%
  \providecommand\transparent[1]{%
    \errmessage{(Inkscape) Transparency is used (non-zero) for the text in Inkscape, but the package 'transparent.sty' is not loaded}%
    \renewcommand\transparent[1]{}%
  }%
  \providecommand\rotatebox[2]{#2}%
  \newcommand*\fsize{\dimexpr\f@size pt\relax}%
  \newcommand*\lineheight[1]{\fontsize{\fsize}{#1\fsize}\selectfont}%
  \ifx\svgwidth\undefined%
    \setlength{\unitlength}{584.40401333bp}%
    \ifx\svgscale\undefined%
      \relax%
    \else%
      \setlength{\unitlength}{\unitlength * \real{\svgscale}}%
    \fi%
  \else%
    \setlength{\unitlength}{\svgwidth}%
  \fi%
  \global\let\svgwidth\undefined%
  \global\let\svgscale\undefined%
  \makeatother%
  \begin{picture}(1,0.73860793)%
    \lineheight{1}%
    \setlength\tabcolsep{0pt}%
    \put(0,0){\includegraphics[width=\unitlength,page=1]{fig1_type0.pdf}}%
    \put(0.60877998,0.40960882){\makebox(0,0)[lt]{\lineheight{1.25}\smash{\begin{tabular}[t]{l}$t=0$\end{tabular}}}}%
    \put(0.86271144,0.67550655){\makebox(0,0)[lt]{\lineheight{1.25}\smash{\begin{tabular}[t]{l}$t\to +\infty$\end{tabular}}}}%
    \put(0.49623146,0.0162757){\makebox(0,0)[lt]{\lineheight{1.25}\smash{\begin{tabular}[t]{l}$t\to -\infty$\end{tabular}}}}%
  \end{picture}%
\endgroup%

    \caption{$k=0$}
            \label{figure1}
  \end{minipage}
  \begin{minipage}{.45\textwidth}
    \def\svgwidth{2.6in}
    %% Creator: Inkscape 1.1 (c4e8f9e, 2021-05-24), www.inkscape.org
%% PDF/EPS/PS + LaTeX output extension by Johan Engelen, 2010
%% Accompanies image file '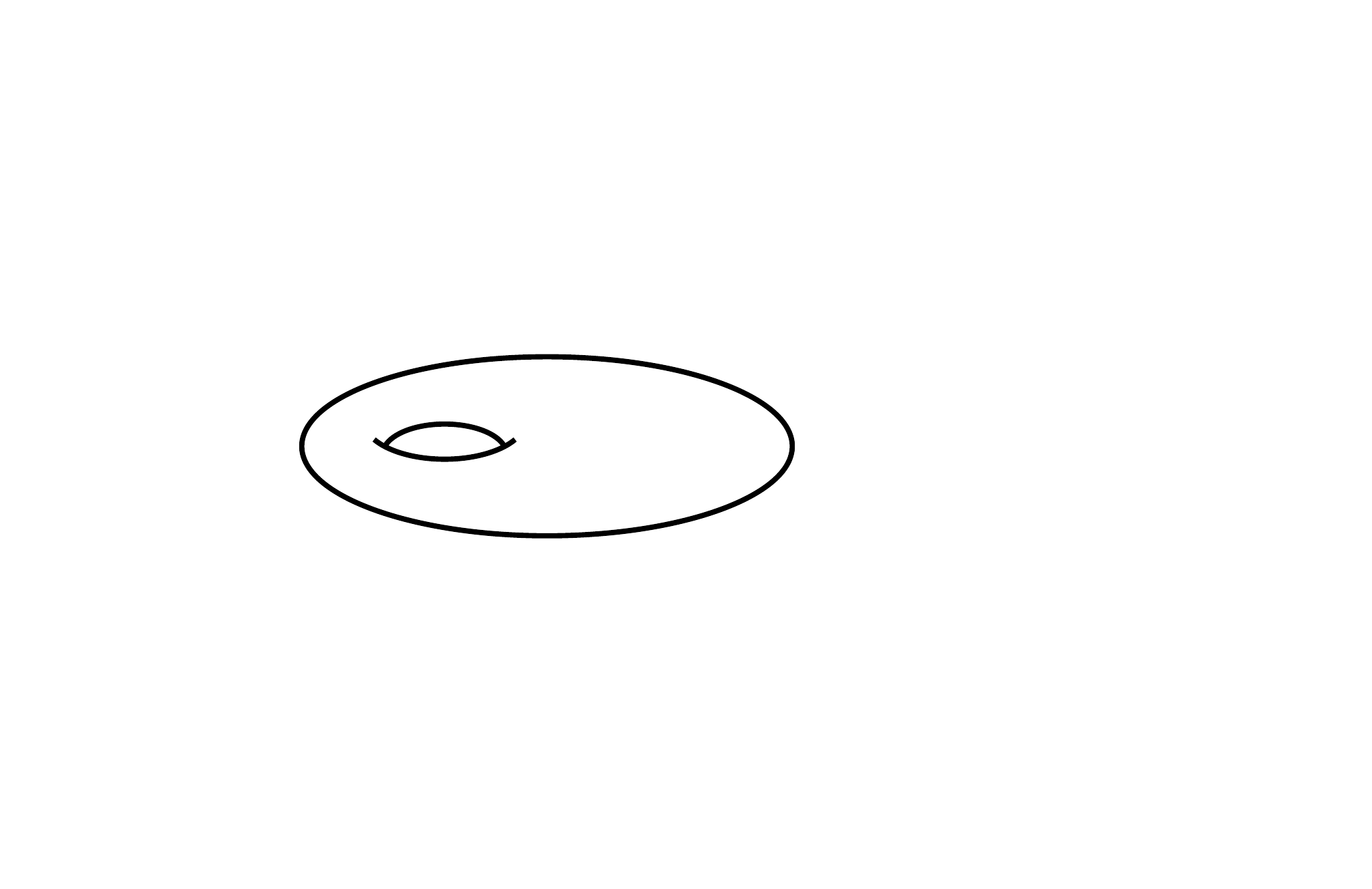' (pdf, eps, ps)
%%
%% To include the image in your LaTeX document, write
%%   \input{<filename>.pdf_tex}
%%  instead of
%%   \includegraphics{<filename>.pdf}
%% To scale the image, write
%%   \def\svgwidth{<desired width>}
%%   \input{<filename>.pdf_tex}
%%  instead of
%%   \includegraphics[width=<desired width>]{<filename>.pdf}
%%
%% Images with a different path to the parent latex file can
%% be accessed with the `import' package (which may need to be
%% installed) using
%%   \usepackage{import}
%% in the preamble, and then including the image with
%%   \import{<path to file>}{<filename>.pdf_tex}
%% Alternatively, one can specify
%%   \graphicspath{{<path to file>/}}
%% 
%% For more information, please see info/svg-inkscape on CTAN:
%%   http://tug.ctan.org/tex-archive/info/svg-inkscape
%%
\begingroup%
  \makeatletter%
  \providecommand\color[2][]{%
    \errmessage{(Inkscape) Color is used for the text in Inkscape, but the package 'color.sty' is not loaded}%
    \renewcommand\color[2][]{}%
  }%
  \providecommand\transparent[1]{%
    \errmessage{(Inkscape) Transparency is used (non-zero) for the text in Inkscape, but the package 'transparent.sty' is not loaded}%
    \renewcommand\transparent[1]{}%
  }%
  \providecommand\rotatebox[2]{#2}%
  \newcommand*\fsize{\dimexpr\f@size pt\relax}%
  \newcommand*\lineheight[1]{\fontsize{\fsize}{#1\fsize}\selectfont}%
  \ifx\svgwidth\undefined%
    \setlength{\unitlength}{593.94317387bp}%
    \ifx\svgscale\undefined%
      \relax%
    \else%
      \setlength{\unitlength}{\unitlength * \real{\svgscale}}%
    \fi%
  \else%
    \setlength{\unitlength}{\svgwidth}%
  \fi%
  \global\let\svgwidth\undefined%
  \global\let\svgscale\undefined%
  \makeatother%
  \begin{picture}(1,0.64785951)%
    \lineheight{1}%
    \setlength\tabcolsep{0pt}%
    \put(0,0){\includegraphics[width=\unitlength,page=1]{fig2_type-1.pdf}}%
    \put(0.63814208,0.31473756){\makebox(0,0)[lt]{\lineheight{1.25}\smash{\begin{tabular}[t]{l}$t=0 \text{ (minimal)}$\end{tabular}}}}%
    \put(0.80971906,0.58552097){\makebox(0,0)[lt]{\lineheight{1.25}\smash{\begin{tabular}[t]{l}$t\to +\infty$\end{tabular}}}}%
    \put(0.8046661,0.04631754){\makebox(0,0)[lt]{\lineheight{1.25}\smash{\begin{tabular}[t]{l}$t\to -\infty$\end{tabular}}}}%
    \put(0,0){\includegraphics[width=\unitlength,page=2]{fig2_type-1.pdf}}%
  \end{picture}%
\endgroup%

    \caption{$k=-1$}
    \label{figure2}
  \end{minipage}
\end{figure}

\end{example}

Consider the scalar curvature  as a map from a Riemannian metric~$g$ on a smooth manifold $M$ to  its scalar curvature $R_g$. The \emph{linearized scalar curvature map} $L_g:=DR|_g$ at a metric $g$ is given by the formula 
\[
L_g h=-\Delta_g (\tr _g h) + \mathrm{div}_g \mathrm{div}_g h - h \cdot \Ric_g
\] 
where $h$ is a symmetric $(0,2)$-tensor. (We note that $\Delta_g$ is the trace of the Hessian.)  Denote by $L_g^*$ the formal $L^2$-adjoint operator of $L_g$. For a scalar function $V$, 
\begin{align}\label{eq:static}
 L_g^* V=(-\Delta_g V)\, g + \nabla_g^2 V -V \Ric_g.
\end{align}
A Riemannian manifold $(M, g)$  is said to be \emph{static} if it admits a nontrivial function $V$ solving $L_g^* V=0$, and such $V$ is called a \emph{static potential}. Note that a connected, static manifold necessarily has constant scalar curvature, and thus an $n$-dimensional static, ALH manifold must have constant scalar curvature $R_g = -n(n-1)$. Then taking the trace of $L_g^*V=0$ and replacing the Laplace term, the static equation $L_g^*V=0$ is equivalent to 
\begin{align}\label{eq:static-system}
\begin{split}
	\nabla^2 V - V(\Ric_g + ng) &=0\\
	\Delta_g V-nV&=0. 
\end{split}
\end{align}
While Section~\ref{sec:scalar1} and Section~\ref{sec:scalar2}  hold for any reference manifolds regardless of the geometry nor topology of $(N, h)$, we will work on a reference manifold that is \emph{static} in Section~\ref{sec:static} and Section~\ref{sec:uniqueness}.  So it may be helpful to have concrete examples of static reference manifolds in mind. 
\begin{example}[Birmingham-Kottler metrics]
We say a reference manifold $(\M, \mb)$ is  \emph{Birmingham-Kottler} if $ \mb$ is Poincar\'e-Einstein, i.e. $\Ric_{\mb} = -(n-1) \mb$. It is equivalent to requiring that its conformal infinity $(N, h)$ has constant Ricci curvature, $\Ric_h = k (n-2)  h$. A Birmingham-Kottler manifold $(\M, \mb)$ is static with a static potential 
\[
	V = \sqrt{r^2+ k}.
\] 
It is discussed in \cite[Appendix B]{Chrusciel:2001vf} that for the case $k=0, -1$ any static potential is a multiple of $V$, and for the case $k=1$ there may be additional static potentials from the isometries of $(N, h)$. For example, when $(N, h)$ is a round sphere $S^{n-1}$, the additional static potentials are $\{x_1, \dots, x_{n}\}$, the restriction  of the Cartesian coordinates in $\mathbb{R}^n$ on $S^{n-1}$. Since $\mb$ is Poincar\'e-Einstein, for any static potential $V$, we shall see that the mass integral, as defined in Definition~\ref{def:massfunctional} below, is zero, and also that $V$ satisfies the Obata equation:
\begin{align}
	m(\mb, V) &= 0 \\
	\nabla^2_{\mb} V &= V \mb.
\end{align}
%Under various situations, the family of Birmingham-Kottler manifolds have been characterized as the only static ALH manifolds with zero mass by Galloway-Woolgar~\cite{Galloway-Woolgar:2018} and Chru\'sciel-Galloway-Potaux \cite{Chrusciel-Galloway-Potaux}. Our results presented  in Section~\ref{sec:rigidity} \emph{remove} the static assumption and characterize the family of Birmingham-Kottler manifolds as the mass minimizers among the suitable class of ALH metrics with nonempty boundary.  
\qed
\end{example}

In view of Corollary~\ref{corollary:minimal}, we give examples of static manifolds whose boundary is an outermost, area-minimizing minimal hypersurface. 
\begin{example}[Generalized Kottler metrics]\label{ex:Kottler}
Let $(N, h)$ be a closed $(n-1)$-dimensional Riemannian manifold with constant Ricci curvature $\Ric_h = k (n-2) h$. Define the metric 
\[
	g_m = \frac{1}{r^2+k-\frac{2m}{r^{n-2}} }dr^2 + r^2 h
\]
on $(r_m, \infty)\times N$ where $r_m 
%= \frac{-3^{-\frac{1}{3}} k+ \big(3m + 3^{-\frac{1}{2}} \sqrt{k^3+ 27m^2} \big)^\frac{2}{3}}{ (9m + \sqrt{3}\sqrt{k^3+27m^2} )^{\frac{1}{3}}}
$ is the largest zero of $r^2+k-\frac{2m}{r^{n-2}}=0$. If $r_m>0$, the manifold has an outermost minimal hypersurface $\Sigma=\{ r_m\} \times N$. The manifold  is static with a static potential
\[
	V=\sqrt{r^2+k-\frac{2m}{r^{n-2}}}.
\] 
From computing as in Definition~\ref{def:massfunctional}  below, the mass $m(g_m,V)=2(n-1)m$ where the volume of $(N,h)$ is normalized as $1$. We note that when $k = -1$, it is possible to have examples of generalized Kottler metrics with $r_m > 0$ and negative mass $m < 0$.
%\margin{Maybe write out explicitly the constant $c(n)$?}
\qed
\end{example}
\subsection{ALH manifolds and weighted function spaces}\label{sec:basic}

Before we specify the class of ALH metrics considered in this paper, we first define the weighted H\"older norms (with respect to a fixed reference manifold $(\M, \mb)$). 
\begin{definition}
 For $ \alpha \in [0, 1)$, $\ell=0, 1, 2, \dots$, and $q\in \mathbb{R}$, the weighted H\"older space $\C^{\ell,\alpha}_{-q} (\M\setminus B_2)$ is  the collection of $\C^{\ell,\alpha}_{\mathrm{loc}}(\M\setminus B_2)$-functions $f$ that satisfy
\begin{align*}
		\| f \|_{\C^{\ell,\alpha}_{-q}(\M\setminus B_2)} := \sum_{|I|=0, 1,\dots, k}\sup_{x\in \M\setminus B_2} r^q |\nabla^I f(x)| +\sup_{x\in \M\setminus B_2} r^{q+\alpha} [\nabla^\ell f]_{\alpha; B_1(x)} <\infty,
\end{align*}
where the covariant derivatives and norms are taken with respect to the reference metric $\mb$, $B_1(x)\subset \M$ is a geodesic unit ball centered at $x$, and  
\[
	[ \nabla^\ell f]_{\alpha; B_1(x)} := \sup_{1\le i_1, \dots, i_\ell\le n} \sup_{y\neq z\in B_1(x)} \frac{|e_{i_1} \cdots e_{i_\ell}(f) (y) - e_{i_1}\dots e_{i_\ell}(f) (z)|}{ d(y, z)^\alpha}
\] 
with respect to a fixed local orthonormal frame $\{ e_1,\dots, e_n\}$ on $(\M, \mb)$, where $e_1 = \sqrt{r^2+k} \frac{\partial}{\partial r}$, $e_\alpha = r^{-1} \hat{e}_\alpha$ for  $\alpha=2, \dots, n$ and $\{ \hat{e}_2, \dots, \hat{e}_n\}$ is a local orthonormal frame of $(N, h)$.

Let $M$ be an $n$-dimensional smooth manifold such that there is a compact set $K\subset M$ and a diffeomorphism $\Phi: \M \setminus B_2 \to M\setminus K$.	For a function $f$, we define the \emph{weighted H\"older norm} $\|f\|_{\C^{\ell,\alpha}_{-q}(M)}$ to be the sum of the weighted norm $\|f\|_{\C^{\ell,\alpha}_{-q}(M\setminus K)} := \| f\circ \Phi \|_{\C^{\ell,\alpha}_{-q}(\M\setminus B_2)}$ and the usual $\C^{\ell,\alpha}$ norms on the compact set $K$ with respect to  a fixed atlas on $M$. Define the space $\C^{\ell,\alpha}_{-q}(M)$ to be the completion of smooth compactly supported functions with respect to the $\C^{\ell,\alpha}_{-q}(M)$-norm.  We shall also write $O^{\ell,\alpha}(r^{-q})$ for functions in $\C^{\ell, \alpha}_{-q}(M)$ to emphasize the fall-off rate, and simply $O(r^{-q})$ for $\C^{0}_{-q}(M)$. With respect to the fixed atlas, we can extend the definition to tensor fields. For instance, we say a tensor field is $\C^{\ell,\alpha}_{-q}(M)$ if all of its components belong to $\C^{\ell,\alpha}_{-q}(M)$.
\end{definition}

\begin{definition}\label{def:locallyhyperbolic}
Let $(M,g)$ be a  smooth, connected $n$-dimensional manifold and $g$ be $\C^\infty_{\mathrm{loc}}$. We say that $(M,g)$ is \emph{asymptotically locally hyperbolic (ALH) (at rate $q$ and with respect to the reference manifold $(\M, \mb)$)} if there exists a compact set $K\subset M$ and a diffeomorphism $\Phi: \M \setminus B_2 \to M\setminus K$ such that, with respect to the coordinate chart of $\Phi$, 
\begin{align}
		g_{ij} -\mb_{ij} &\in \C^{2,\alpha}_{-q} (M\setminus K)\\
		r(R_g + n(n-1))&\in L^1(M\setminus K).\label{eq:L1}
\end{align}
We will slightly blur the distinction between $\M \setminus B_2$ and  $M\setminus K$. For example, we write $\mb$ in $M\setminus K$ also for the pull-back metric from $(\M\setminus B_2, \mb)$ and denote by $B_r, S_r$ for $r\gg 1$ as the subsets in $M$ such that $M\setminus B_r$ and $S_r$ are the $\Phi$-image sets of $\M \setminus B_r$ and $S_r$ in $\M$, respectively.

If the reference manifold is the standard hyperbolic space, $(M, g)$ is called \emph{asymptotically hyperbolic}.
\end{definition}

Throughout the paper, we will assume $\alpha\in (0, 1)$ and the fall-off rate $q$ to stay in the range:
\[
	\frac{n}{2}<q<n.
\]	
The lower bound ensures that the mass integral in Definition~\ref{def:massfunctional} below is well-defined, while the upper bound is to avoid the ``critical exponent'' that usually comes up in analysis as in Section~\ref{sec:analysis}.

For an ALH manifold $(M, g)$ with nonempty boundary $\Sigma$, we will consider the weighted spaces to describe functions that decay to zero exponentially fast toward $\Sigma$.  Let $d(x)$ be the distance function to~$\Sigma$, defined in a collar neighborhood of $\Sigma$.  We define a weight function $\rho(x)$ on $M$ such that $\rho>0$ in $\Int M$ and 
\[
	\rho(x)=\left\{
		\begin{array}{ll}
			\exp \left(-\frac{4}{d(x)}\right) & \text{ in a thin collar neighborhood of $\Sigma$}\vspace{5pt}\\
			{r(x)^{-2q+n-\delta}} & \text{ in } M\setminus K
		\end{array}
	\right..
\]
%\margin{changed $\rho(x)$ from $r(x)^{-2q+n-1}$ to $r(x)^{-2q+n-\delta}$ so the function spaces $\mathcal B^{0,\alpha}=\C^{0,\alpha}_{-q}$ on $M\setminus B_R$.}
where $\delta$ is a fixed number such that 
\begin{align}\label{eq:d}
	\delta \in \left(\frac{2 (\sqrt{n^2+n-2} - n)}{n-2}, 1\right).
\end{align}
 Note that the lower bound $\frac{2 (\sqrt{n^2+n-2} - n)}{n-2}$ is a number between $0$ and $1$ for any $n\ge 3$ and   is used for the estimates in the proof of Proposition~\ref{prop:exterior estimate} below.
Define the weighted  $L^2_\rho(M)$ norm by 
\begin{align*}
	\| u \|_{L^2_\rho(M)}^2 &=\int_M |u|^2 \rho \, d\mu_g.
\end{align*}
 The dual space  $L^2_{\rho^{-1}}(M)$ to $L^2_\rho(M)$ is the main interest here. Roughly speaking,  $L^2_{\rho^{-1}}(M)$ contains functions that (in an integral sense) decay  to zero at the rate $\exp \big(-\frac{2}{d(x)}\big)$ toward $\Sigma$ and decay to zero toward infinity at a rate just a little bit slower than $r^{-q}$.   More specifically, the assumption $\delta<1$ implies 
 $\C^{0}_{-q}(M\setminus B_R)\subset L^2_{\rho^{-1}}(M\setminus B_R)$ (noting the volume measure $d\mu_g = \big(1+o(1)\big) \frac{r^{n-1}}{\sqrt{r^2+k}} \, dr d\sigma_h = O(r^{n-2}) \, dr d\sigma_h$). We also define the $H^k_{\rho}(M)$-norm by 
\[	
	\| u \|_{H^k_\rho (M)}^2 = \sum_{|I|=0}^k \big\| |\nabla^I_g u|_g \big\|_{L^2_\rho(M)}^2.
\]
Let $H^k_\rho(M)$ be the completion of $\C^\infty_c(M)$ with respect to the $H^k_\rho(M)$-norm. For any  $B_r \subset M$ with~$r$ sufficiently large, we can decompose the integral over $B_r$ and the exterior $M\setminus B_r$:
\begin{align}\label{eq:norm}
	\| u \|_{H^k_\rho(M)}^2&= \sum_{|I|=0}^k \int_{B_r} |\nabla^I u|^2 \rho \, d\mu_g +  \sum_{|I|=0}^k\int_{M\setminus B_r} |\nabla^I u|^2  r^{-2q + n-\delta} \, d\mu_g\notag\\
	&=:\| u \|_{H^k_\rho(B_r)}^2 + \| u \|_{H^k_\rho(M\setminus B_r)}^2.
\end{align}
 Clearly, we have 
\begin{align}
\label{eq:exterior}
\begin{split}
	\| u \|_{H^k_\rho(M)} &=\left(\| u \|_{H^k_\rho(B_r)}^2 + \| u \|_{H^k_\rho(M\setminus B_r)}^2\right)^{\frac{1}{2}}\\
	&\le \left (\| u \|_{H^k_\rho(B_r)}+ \| u \|_{H^k_\rho(M\setminus B_r)}\right)\le 2\| u \|_{H^k_\rho(M)}.
\end{split}
\end{align}
Those Hilbert spaces will be used to describe weak solutions to an elliptic PDE in the variational argument in Section~\ref{sec:scalar1}.

We will also use the weighted H\"older spaces to describe $\C^{\ell,\alpha}_{\mathrm{loc}}(M)$-functions that can be, roughly speaking,  bounded by the weight function $\rho^{\frac{1}{2}}$. We follow closely the definition $\C^{\ell, \alpha}_{\phi, \varphi}$ from \cite{Corvino:2020ty} (see also \cite{Chrusciel:2003ug}):
\[
	\| u \|_{\C^{\ell, \alpha}_{\phi, \varphi} (M)} =  \sup_{x\in M} \left(\sum_{j=0}^\ell \varphi(x) \phi^j(x) \| \nabla_g^j u \|_{\C^0(B_{\phi(x)}(x))} + \varphi(x) \phi^{\ell+\alpha}(x) [\nabla^\ell_g u]_{ \alpha; B_{\phi(x)}(x)}\right)
\]
where the ``scaling'' $\phi$ is a function on $M$ such that $\phi>0$ in $\Int M$, $\phi(x) =d(x)^2$ near $\Sigma$, and $\phi(x)=1$ outside a larger compact set\footnote{The scaling function $\phi$ only appears in this section and the reader should not confuse it with the same notation that is used differently to prescribe the mean curvature in Theorem~\ref{thm:boundary0} and the rest of the paper.}. We define 
\begin{align*}
	\mathcal{B}^{2,\alpha}(M) &= \C^{2, \alpha}_{\phi, \phi^{2+\frac{n}{2}}  \rho^{-\frac{1}{2} }}(M) \cap L^2_{\rho^{-1}}(M)\cap \C^{2,\alpha}_{-q}(M)\\
	\mathcal{B}^{0,\alpha}(M) &= \C^{0, \alpha}_{\phi, \phi^{4+\frac{n}{2}}  \rho^{-\frac{1}{2} }}(M) \cap L^2_{\rho^{-1}}(M)\cap \C^{0,\alpha}_{-q}(M).
\end{align*} 

The definition of $\C^{\ell, \alpha}_{\phi, \varphi} (M)$ is quite technical, but we will not explicitly use its definition but just the results stated in those norms from \cite{Corvino:2020ty}. Therefore, it suffices to recognize some general properties in order to make sense of the proofs in Section~\ref{sec:scalar1}.  In particular, $\mathcal{B}^{\ell,\alpha}(M)$ is a subset of $\C^{\ell, \alpha}_{\mathrm{loc}}$-functions whose $(\ell,\alpha)$-th derivatives,  for $\ell=0, 1,2$, decay to zero toward $\Sigma$ exponentially (at least at the rate $\exp \left(-\tfrac{2}{d(x)}\right)$) and  decay to zero toward infinity at least at the rate $r^{-q}$. %Also, the exponent of $r$ in the definition of $\rho$ for large $r$ is chosen so that any bounded linear functional on $L^2_{\rho^{-1}}(M)$ cannot ``grow too fast'' near infinity, see Remark~\ref{remark:dual} below. 

\subsection{Basic analysis}\label{sec:analysis}

To study the scalar curvature map on an ALH manifold, we will frequently encounter the operator $\Delta_g  - n$ on scalar functions and will use the following result. 
\begin{lemma}\label{lemma:isomorphism}
Let  $s\in (-1, n)$ and $(M, g)$ be an $n$-dimensional ALH manifold with nonempty boundary $\Sigma$. Denote $\mathsf{T}u= \Delta_g u - nu$.  Then the following holds:
\begin{enumerate}
\item $\mathsf{T}: \Big\{ u\in \C^{2,\alpha}_{-s}(M): u = 0 \mbox{ on } \Sigma\Big\} \to \C^{0,\alpha}_{-s}(M)$ is an isomorphism.\label{item:Dirichlet}
\item $\mathsf{T}: \Big\{ u\in \C^{2,\alpha}_{-s}(M): \nu(u) = 0 \mbox{ on } \Sigma\Big\} \to \C^{0,\alpha}_{-s}(M)$ is an isomorphism, where $\nu$ is a unit normal vector on $\Sigma$. \label{item:Neumann}
\end{enumerate}
Therefore, given any  $f\in \C^{0,\alpha}_{-s}(M)$ and $\eta$, where $\eta\in \C^{2,\alpha}(\Sigma)$ for the Dirichlet boundary problem and  $\eta\in \C^{1,\alpha}(\Sigma)$ for the Neumann boundary problem, there exist unique $u_1, u_2\in  \C^{2,\alpha}_{-s}(M)$ such that 
\begin{align*}
	\mathsf{T} u_1 &= f \mbox{ in $M$  with $u_1=\eta$ on $\Sigma$},\\
	\mathsf{T}u_2 &= f \mbox{ in $M$  with $\nu(u_2)=\eta$ on $\Sigma$}.
\end{align*}
As a direct consequence, if $f$ is compactly supported, then $u_1, u_2\in \C^{2,\alpha}_{-s}(M)$ for any $s<n$. 
\end{lemma}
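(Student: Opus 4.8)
The plan is to reduce Lemma~\ref{lemma:isomorphism} to the Fredholm theory for the operator $\mathsf{T} = \Delta_g - n$ on an ALH manifold, separating the ``interior/boundary'' analysis near $\Sigma$ (which is standard elliptic theory on a compact collar) from the ``at-infinity'' analysis on the ALH end. First I would verify that $\mathsf{T}$ is formally self-adjoint and check the indicial roots: writing the model operator near infinity in the $t$-coordinate where $\mb = dt^2 + \mathrm{(stuff)}\,h$, asymptotics of the form $u\sim e^{\lambda t}$ (equivalently $u\sim r^{-s}$) solve the model equation for $s = -1$ and $s = n$, so the interval $s\in(-1,n)$ is precisely the range of weights avoiding the critical exponents. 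This is exactly why the hypothesis reads $s\in(-1,n)$, matching the remark in Section~\ref{sec:basic} that $q<n$ is imposed to stay below the critical exponent. For such non-critical weights, the standard ALH Fredholm package (e.g.\ Graham--Lee / Lee / Andersson--Chru\'sciel type analysis, or the weighted Schauder estimates behind the $\C^{\ell,\alpha}_{-q}$ spaces) gives that $\mathsf{T}$ acting between the weighted Hölder spaces with either Dirichlet or Neumann boundary condition is Fredholm of index $0$; hence it suffices to prove injectivity.

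The main work is therefore the injectivity (equivalently, vanishing of the kernel). Suppose $u\in\C^{2,\alpha}_{-s}(M)$ with $s\in(-1,n)$ solves $\Delta_g u - nu = 0$ with $u=0$ (resp.\ $\nu(u)=0$) on $\Sigma$. Multiply by $u$ and integrate over $B_R\setminus(\text{collar})$; integration by parts gives $\int |\nabla u|^2 + n\int u^2 = \oint_{S_R} u\,\nu(u) - \oint_\Sigma u\,\nu(u)$. The boundary integral over $\Sigma$ vanishes for either boundary condition. For the sphere-at-infinity term, since $u\in\C^{2,\alpha}_{-s}$ with $s>-1$ one has $u = O(r^{-s})$ and $\nabla u = O(r^{-s})$, while $d\sigma_{S_R} = O(r^{n-1})\,d\sigma_h$; thus $|\oint_{S_R} u\,\nu(u)| = O(R^{n-1-2s})\cdot O(R^{-1})\cdot(\text{Jacobian factors})$. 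A careful bookkeeping (using $|ds|_{\overline\mb}=1$, i.e.\ $\nu \approx \sqrt{r^2+k}\,\partial_r$, so an extra $O(r)$ appears) shows the flux term is $O(R^{n-2s})$, which tends to $0$ as $R\to\infty$ precisely when $s>n/2$; but in fact one does not need $s>n/2$ here, since $\Delta_g u - nu = 0$ with the decay $u=O(r^{-s})$, $s>-1$, bootstraps via the indicial-root analysis to force $u = O(r^{-n})$ (no solution can decay at an intermediate non-indicial rate; it must decay at the next indicial rate $n$), after which the flux manifestly vanishes. Hence $\int|\nabla u|^2 + n\int u^2 \le 0$, forcing $u\equiv 0$ on the end and then, by unique continuation for the elliptic equation together with the homogeneous boundary condition, $u\equiv 0$ on all of $M$. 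This handles both \eqref{item:Dirichlet} and \eqref{item:Neumann}.

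With injectivity in hand, Fredholm index $0$ gives surjectivity, so $\mathsf{T}$ is an isomorphism onto $\C^{0,\alpha}_{-s}(M)$ in both cases; the inhomogeneous boundary-value problems follow by subtracting a $\C^{2,\alpha}_{-s}$ extension of the boundary data $\eta$ (using that $\Sigma$ is compact, any $\C^{2,\alpha}$, resp.\ $\C^{1,\alpha}$, function extends to a compactly supported collar function of the right regularity, and the Neumann case needs only a $\C^{1,\alpha}$ extension since applying $\nu$ costs one derivative), reducing to the homogeneous case already solved. Finally, for the last assertion: if $f$ has compact support then $f\in\C^{0,\alpha}_{-s}(M)$ for \emph{every} $s<n$, so the solution $u_i$ lies in $\C^{2,\alpha}_{-s}(M)$ for every $s<n$; more precisely, one solves at some fixed admissible weight and then uses the indicial-root/elliptic-regularity bootstrap (no forcing term near infinity) to upgrade the decay rate all the way up to, but not including, the critical exponent $n$, which is exactly the claim. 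The step I expect to be the genuine obstacle is the decay-improvement argument at infinity — pinning down the indicial roots for $\mathsf{T}$ on the ALH end in the right coordinates and justifying that a solution in $\C^{2,\alpha}_{-s}$ with merely $s>-1$ cannot leak mass through $S_R$, i.e.\ that it automatically decays at the indicial rate $n$; everything else is either standard compact-collar elliptic theory or a routine application of the weighted Fredholm framework underlying the $\C^{\ell,\alpha}_{-q}$ spaces.
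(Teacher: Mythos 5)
Your route (weighted Fredholm theory of index $0$ plus an energy/integration-by-parts argument for injectivity) is genuinely different from the paper's, which follows Graham--Lee: one constructs a positive barrier $\varphi$ with $\varphi=r^{-s}$ near infinity, constant near $\Sigma$, and $\mathsf{T}\varphi<-\epsilon\varphi$, and then applies Yau's maximum principle to $u/\varphi$ to get the basic estimate $\|u\|_{\C^0_{-s}(M)}\le C\|\mathsf{T}u\|_{\C^0_{-s}(M)}$ directly at the given weight; the Neumann case is handled by noting that since $\varphi$ is constant near $\Sigma$, a boundary maximum of $u/\varphi$ is a boundary maximum of $u$, and $\nu(u)=0$ forces $\Delta_g u\le 0$ there. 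That single estimate delivers both uniqueness and the a priori bound needed for existence, with no appeal to Fredholm index computations.

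The problem is that your argument has a genuine gap exactly where you flag it, and that gap is the heart of the lemma. The flux term $\oint_{S_R}u\,\nu(u)$ is $O(R^{n-1-2s})$ and does not vanish for $s$ near the lower end of $(-1,n)$ (indeed $s$ may be negative, so $u$ need not even be bounded), so your injectivity proof rests entirely on the unproved assertion that a solution of $\mathsf{T}u=0$ in $\C^{2,\alpha}_{-s}(M)$ with $s>-1$ automatically decays at the indicial rate $r^{-n}$. That decay-improvement statement is true, but the standard way to prove it on an ALH end is precisely the barrier-plus-maximum-principle argument above (iterating the basic estimate at successively larger weights $s'<n$), i.e.\ filling your gap essentially reproduces the paper's entire proof, at which point the Fredholm machinery and the energy identity become superfluous. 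In addition, the ``index $0$'' input is itself nontrivial here: the cited conformally compact Fredholm theory is formulated for manifolds without an interior compact boundary, and extending it to the Dirichlet and Neumann problems on $\Sigma$ and computing the index is extra work you have not supplied. (Two smaller points: once the energy identity closes, $u\equiv 0$ on all of $M$ immediately, so no unique continuation step is needed; and your reduction of the inhomogeneous boundary data to the homogeneous case by extending $\eta$ matches the paper's last step.)
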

\begin{remark}\label{rmk:Neumann}
We shall use an equivalent statement to Item~\eqref{item:Neumann} that $ u\in \C^{2,\alpha}_{-s}(M)$ to $(\mathsf{T}u, \nu(u)) \in \C^{0,\alpha}_{-s}(M)\times \C^{1,\alpha}(\Sigma)$ is an isomorphism.  We also note that if $M$ is boundaryless, the same argument shows that $\mathsf{T}:  \C^{2,\alpha}_{-s}(M) \to \C^{0,\alpha}_{-s}(M)$ is an isomorphism. 
\end{remark}
\begin{proof}
The proof follows similar arguments as in \cite[Section 3]{Graham:1991aa} where they study conformally compactifiable manifolds whose conformal infinity is a sphere and, in the case of non-empty boundary, with the Dirichlet boundary condition.  (Note that our $\Delta_g$ is the trace of the Hessian and has the opposite sign from that in \cite{Graham:1991aa}. Also, their manifold is of dimension $n+1$.)  The numbers $\{-1, n\}$ that defines the range of $s$ are known as the characteristic exponents. Each of them corresponds to the rate that a nontrivial solution to $\mathsf{T}u=0$ can exist so the isomorphism theorem would fail if $s= -1$ or $n$.

We now describe how slight modifications of their arguments can prove the lemma in the ALH setting and for the Neumann boundary condition. As in \cite[Proposition 3.9]{Graham:1991aa}, for any fixed $s\in (-1, n)$, one can construct a bounded \emph{positive} radial function $\varphi(x)$ on $M$ such that $\varphi(x) = r^{-s}$ near infinity, $\varphi$ is constant in a collar neighborhood of $\Sigma$,  and  $\mathsf{T}\varphi < -\epsilon \varphi$ for some constant $\epsilon>0$. Then we will proceed as in \cite[Proposition 3.8]{Graham:1991aa} to derive the \emph{basic estimate}: there is a constant $C>0$ such that for  $u\in \C^{2,\alpha}_{-s}(M)$ with $u = 0$ on $\Sigma$, 
\[
\| u \|_{\C^{0}_{-s}(M)}\le C \| \mathsf{T} u \|_{\C^{0}_{-s}(M)}.
\] 

We sketch their proof so we can highlight the necessary modification for the Neumann boundary condition in the next paragraph. It is direct to compute that 
\begin{align} \label{eq:diff}
	\Delta_g \left(\frac{u}{\varphi} \right)= \frac{\mathsf{T}u}{\varphi} - 2\nabla \log \varphi \cdot \nabla \left(\frac{ u}{\varphi}\right) - \frac{u}{\varphi} \left(\frac{\mathsf{T} \varphi }{\varphi} \right).
\end{align}
Because $\varphi>0$ and $\varphi=r^{-s}$ near infinity,  $\frac{u}{\varphi}$ is bounded in $M$.  We may assume without loss of generality that $\sup_M \frac{u}{\varphi}>0$. The Dirichlet boundary condition  $u=0$ on $\Sigma$ ensures that the supremum is not attained on the boundary.  As in \cite[Theorem 3.5]{Graham:1991aa}, we apply Yau's maximum principle so that there is a sequence of points $\{ x_k\}$ such that $\left(\frac{u}{\varphi} \right)(x_k) \to \sup_M \frac{u}{\varphi}$, $\left|\nabla \left(\frac{u}{\varphi} \right)\right|(x_k)\ \to 0$ as $k\to \infty$, and $\liminf_{k\to \infty}\Delta_g \left(\frac{u}{\varphi} \right) (x_k)\le 0$. Taking $\liminf$ on \eqref{eq:diff} along the sequence of points, we then get
\[
	0\ge \liminf_{k\to \infty}  \frac{\mathsf{T} u(x_k)}{\varphi(x_k)} + \epsilon\left(\sup_M \frac{u}{\varphi}\right),
\]
where we use $-\frac{\mathsf{T} \varphi}{\varphi} >\epsilon $. Rearranging the inequality, we can see that it implies the basic estimate. From there, it is standard to show isomorphism from the basic estimate (see \cite[Proposition 3.7]{Graham:1991aa}). This complete the proof for Item~\eqref{item:Dirichlet}.

We now see how to expand the above argument for Item~\eqref{item:Neumann}. The only difference is that  $\sup_M \frac{u}{\varphi}$ can be attained at a boundary point $p\in \Sigma$. Since $\varphi$ is constant near $p$, when we evaluate \eqref{eq:diff} at~$p$,  the  term involving $\nabla \log \varphi$ vanishes. Equation \eqref{eq:diff} at $p$ becomes 
\begin{align*}
	\Delta_g \left(\frac{u}{\varphi} \right)(p)= \frac{\mathsf{T} u}{\varphi} -\left( \sup_M \frac{u}{\varphi}\right) \left(\frac{\mathsf{T} \varphi }{\varphi} \right).
\end{align*}
To derive the basic estimate as above, it suffices to show that $\Delta_g \left(\frac{u}{\varphi} \right)(p)\le 0$. Since $\varphi$ is constant near $p$,  the function $u$ itself also reaches a local maximum at $p$. The condition $\nu(u)=0$ implies $\Delta_g u = \Delta_\Sigma u + \nu(\nu(u)) + \nu(u) H_g =  \Delta_\Sigma u + \nu(\nu(u))$ (we may without loss of generality assume $\nabla_\nu \nu =0$ in a collar neighborhood of $\Sigma$). At the local maximum point $p$, we have $\Delta_\Sigma u(p)\le 0$ and $\nu(\nu(u))(p)\le 0$,  and thus $\Delta_g \left(\frac{u}{\varphi} \right)(p)= \frac{1}{\varphi(p)} \Delta_g u(p)  \le 0$. The rest of the argument is as above. 

For the last statement in the lemma, given $\eta\in \C^{2,\alpha}(\Sigma)$ and $f\in \C^{0,\alpha}_{-s}(M)$, we extend $\eta$ to be a compactly supported $\C^{2,\alpha}$-function in $M$, still denoted by $\eta$. Let $u\in \C^{2,\alpha}_{-s}(M)$ with $u=0$ on $\Sigma$ solve $\mathsf{T} u = -\mathsf{T} \eta + f$. The desired solution $u_1$ is obtained by setting $u_1 = u+\eta$. We can find $u_2$ by a similar argument.
\end{proof}

The following lemma deals with the asymptotics of a static potential and, more generally, the asymptotics of solutions to an \emph{inhomogeneous} static equation $L_g^* V=\tau$. An unbounded open subset of $M\setminus K$ is called a \emph{cone} if it can be expressed as $ [r_*,\infty)\times W$ for some $r_* \gg 1$ and $W$ an open subset of $N$. The proof of the lemma follows closely the argument in \cite[Section 3]{Huang:2020tm}  for asymptotically hyperbolic manifolds without boundary. For the last statement in the following lemma about $\tau=0$, the more precise asymptotics of $V$ can be obtained in \cite[Appendix A]{Chrusciel:2018aa}, in which one can also find an alternative argument for the conclusion that $V$ is identically zero.  

\begin{lemma}[Cf. {\cite{Huang:2020tm, Chrusciel:2018aa}}]\label{lemma:linear}
Let $(M, g)$ be ALH with possibly nonempty boundary.  Let $V\in \C^2_{\mathrm{loc}} (M)$ solve 
\[
L_g^* V=\tau
\]
where $\tau\in \C^0_{1-q}(M)$. Then either there is a cone $U$ such that $V$ grows linearly in $U$, i.e., there is a positive constant $C$ such that $C^{-1} |x| \le |V(x)| \le C |x|$ for all $x\in U$ where $|x| = r(x)$, or $V= O(r^{-d})$ for some $d>0$. 

Furthermore, if $\tau=0$, then either $V$ grows linearly in a cone or $V$ is identically zero in $M$.
\end{lemma}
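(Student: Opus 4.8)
The plan is to follow the argument of \cite[Section~3]{Huang:2020tm}, adapting it past the boundary $\Sigma$. If $V\equiv 0$ there is nothing to prove, so assume $V\not\equiv 0$; when $\tau=0$ this makes $(M,g)$ a connected static manifold, hence $R_g\equiv -n(n-1)$ and $V$ a static potential on all of $M$. Tracing $L_g^*V=\tau$ yields the scalar equation
\[
\mathsf{T}V:=\Delta_g V-nV=-\tfrac{1}{n-1}\bigl(V\,(R_g+n(n-1))+\tr_g\tau\bigr),
\]
and since $g-\mb\in\C^{2,\alpha}_{-q}$ while $\Ric_{\mb}+(n-1)\mb=O(r^{-2})$, one has $R_g+n(n-1)=O(r^{-\mu})$ with $\mu:=\min\{2,q\}>1$ and $\tr_g\tau\in\C^{0,\alpha}_{1-q}$; thus $\mathsf{T}V$ is controlled by a fast-decaying multiple of $V$ plus a term of order $r^{1-q}$, and $\mathsf{T}V\equiv 0$ when $\tau=0$. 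Next I would extract the leading asymptotics of $V$ from the normal component of the tensor equation along the (nearly radial) unit-speed geodesics $\gamma$ foliating $M\setminus K$: with $r\asymp e^{t}$, the function $f(t)=V(\gamma(t))$ satisfies $f''=\bigl(\Ric_g(\gamma',\gamma')-\tfrac{R_g}{n-1}\bigr)f+O(e^{-(q-1)t})$, whose coefficient of $f$ tends to $+1$. A Gronwall comparison, uniform in the direction, gives $|f(t)|\le Ce^{t}$, hence $V=O(r)$ on $M\setminus K$, together with an expansion $V=a\,w_1+V_1$ where $a$ is a function on $N$, $w_1\asymp r$ is the linearly growing model solution on $(\M,\mb)$, and $V_1=O(r^{-\beta})$ for some $\beta>0$. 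If $a\not\equiv0$, then on the cone $U$ over a small neighborhood of a point where $a\neq0$ one has $V\asymp r$, which is the first alternative.

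In the remaining case $a\equiv0$, so $V=O(r^{-\beta})$, I would run a bootstrap. Feeding $V=O(r^{-\beta})$ back into the scalar equation gives $\mathsf{T}V=O(r^{-\mu-\beta})+O(r^{1-q})$; since the indicial roots of $\mathsf{T}$ are $\{-1,n\}$, every decay exponent in $(0,n)$ is regular, and because the linearly growing mode of $V$ is absent, $V$ improves to order $r^{-\min\{\mu+\beta,\,q-1,\,n\}}$ — this step uses interior and exterior Schauder estimates near infinity, as in the proof of Lemma~\ref{lemma:isomorphism} and Proposition~\ref{prop:exterior estimate}. Iterating, $\beta$ increases by at least $\mu>1$ each time until it saturates at $q-1<n$, so $V=O(r^{-d})$ with $d=q-1>0$. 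This establishes the dichotomy in the stated generality.

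For the final statement, assume $\tau=0$ and that $V$ does not grow linearly in any cone. Then the bootstrap of the previous step — now with the $r^{1-q}$ term absent — continues until $V=O(r^{-n})$, and interior Schauder gives $|\nabla V|=O(r^{-n})$ as well. To conclude $V\equiv0$ I would use the rigidity of the static system: along geodesics, $\nabla^2V=V(\Ric_g+ng)$ is a linear second-order ODE, so on the connected $M$ the pair $(V(p),\nabla V(p))$ at any single $p$ determines $V$ everywhere, with the dependence controlled by Gronwall along connecting paths. Fixing $p_0\in\Int M$ and letting $p\to\infty$ along a radial geodesic: the data at $p$ is $O(r(p)^{-n})$, while the amplification of the ODE over the path (of length $\sim\log r(p)$, coefficient $\asymp1$) is only $O(r(p))$, so $(V(p_0),\nabla V(p_0))=O(r(p)^{1-n})\to0$, forcing $V\equiv0$. (Alternatively one may invoke the sharper asymptotics and vanishing argument of \cite[Appendix~A]{Chrusciel:2018aa}.)

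The \emph{main obstacle} is precisely what the boundary costs. In \cite{Huang:2020tm} one closes both the bootstrap and the vanishing statement with global elliptic isomorphism theorems and integration by parts over $M$; with $\Sigma\neq\emptyset$ and no boundary condition imposed on $V$, neither is available. I would resolve this by treating the improvement of decay and the final rigidity as genuinely \emph{exterior} phenomena, carried out on $M\setminus B_R$ by combining interior and exterior Schauder estimates with the explicit indicial and ODE analysis (cut off away from $\Sigma$), keeping the ODE constants uniform over $N$. I expect this localization, rather than any single inequality, to be the delicate point.
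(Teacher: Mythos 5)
Your proposal is correct and follows essentially the same route as the paper: the dichotomy comes from the radial ODE analysis of \cite[Section 3]{Huang:2020tm} (your $f''=(1+Q)f+O(e^{(1-q)t})$ is exactly their equation), and the vanishing for $\tau=0$ is obtained, as in the paper, by combining the traced equation $\Delta_g V-nV=0$ with the isomorphism of Lemma~\ref{lemma:isomorphism} to push the decay past the $r^{-1}$ mode and then invoking ODE rigidity along geodesics. The only imprecision is the claim that the bootstrap reaches $V=O(r^{-n})$: since $n$ is an indicial root one only gets $O(r^{-s})$ for every $s<n$, exactly as the paper states, but any $s>1$ already closes your Gronwall step, so nothing is lost.
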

\begin{remark}\label{remark:dual}
This lemma and the key coercivity estimate (Proposition~\ref{prop:exterior estimate} below) give the main motivation behind the definition of the weight function $\rho$ so that  $\rho = r^{-2q+n-\delta}$ near infinity. It is direct to check that if $V$ grows linearly in a cone $U$, then $V\not \in\big(L^2_{\rho^{-1}}(M)\big)^*=L^2_\rho(M)$: For any number $a>0$, let $f\ge 0$ be a smooth function supported in $U$ such that $f=  r^{-q-a}$ in a smaller cone $U_0$, and then $f \in L^2_{\rho^{-1}}(M)$. If we further choose $a<n-q$ (recall $q<n)$, then 
\[
	\int_M V f \, d\mu_g \ge C \int_{U_0} r^{1-q-a}  r^{n-2} dr d\sigma_g =  C \int_{U_0} r^{n-q-a-1}  dr d\sigma_g=\infty
\]
where we use that $d\mu_g = O(r^{n-2}) \, dr d\sigma_g $. 

Of course one can choose  $f$ as above in $\C^{0,\alpha}_{-q}(M)$. Then the same  computation shows that if $V$ grows linearly in a cone, then  $V\not\in \big( \C^{0,\alpha}_{-q}(M)\big)^*$.
\end{remark}

\begin{proof}
The proof of the first statement follows by considering the asymptotics of $V(\gamma(t))$ where $\gamma(t)$ is a radial geodesic emitted from  $S_r$ for some $r$ large. The equation $L_g^* V=\tau$ is reduced to an inhomogeneous ODE for $V(\gamma(t))$.  One can verify that the same ODE argument in Section 3 of \cite{Huang:2020tm} for complete asymptotically hyperbolic manifolds  does extend for ALH manifolds, as the ODE argument is indifferent to the geometry and topology of $N$.  For the case $\tau=0$,  the maximum principle was used in \cite[Corollary 3.6]{Huang:2020tm} to conclude $V=0$ if $V$ decays to zero at infinity, but that argument doesn't apply in our case of manifolds with boundary. 

Nevertheless, we show how to expand the above argument and derive the same conclusion for the case $\tau=0$. From the first part of the lemma,  we just need to show that if $V(\gamma(t))$ decays to zero at infinity along every radial geodesic $\gamma(t)$, then $V$ is identically zero: The static equation implies that the restriction $V(\gamma(t))$ along any radial geodesic satisfies a homogeneous ODE 
\[
	\frac{d^2}{dt^2} V(\gamma(t))  = (1+ Q(t) ) V(\gamma(t)),
\] 
with $|Q(t)|\le Ce^{-qt}$ because $|\gamma(t)|$ is comparable to $e^t$ (see Equation (3.13) in \cite{Huang:2020tm}). Since we assume that $V(\gamma(t))$ decays to zero at infinity,  Lemma 3.3 of \cite{Huang:2020tm} implies that either (1) $V(\gamma(t))$ is identically zero, or  (2) $C^{-1}e^{-t} \le V(\gamma(t))\le  Ce^{-t}$ for some positive constant $C$, or equivalently $C^{-1}|x|^{-1} \le V(x) \le C |x|^{-1}$. On the other hand, the static equation implies $\Delta_g V - n V=0$, and thus by Lemma~\ref{lemma:isomorphism}, $V\in \C^{2,\alpha}_{-s}(M)$ for any $s<n$. It implies that $V(x)$ must decay at a faster rate toward infinity,  excluding Case (2). We can then conclude that $V$ is identically zero.
\end{proof}

\subsection{The Wang-Chru\'sciel-Herzlich mass integrals}

Recall the definition of a static ALH manifold in \eqref{eq:static} or \eqref{eq:static-system}. When the reference manifold $(\M, \mb)$ is static, one can define mass integrals for an ALH manifold $(M, g)$ that essentially measure the deficit of $(M, g)$ from $(\M, \mb)$ at infinity.  A definition is first given by X. Wang \cite{Wang:2001ww} for conformally compact, asymptotically hyperbolic manifolds with certain asymptotics. The definition below is by  Chru\'sciel-Herzlich~ \cite{Chrusciel:2003aa}. %We refer to Section~3 of \cite{Chrusciel:2003aa} for more discussions  the mass integral.

\begin{definition} [Wang-Chru\'sciel-Herzlich mass integrals] \label{def:massfunctional}
Let $(\M, \mb)$ be a static, reference manifold  with a static potential $V$. Let $(M,g)$ be ALH with respect to $(\M, \mb)$.  We define
\begin{align}\label{eq:mass2}
	m(g, V) = \lim_{r\rightarrow\infty}\int_{S_r} \left[V\big((\mathrm{div}\, e) (\nu) -\nu (\tr  \, e) \big)+ (\tr  \,e )  \nu (V) - e(\nabla V, \nu)\right] \, d\sigma
\end{align}
where $e = g-\mb$; the outward unit normal $\nu$ to $S_r$, and $\mathrm{div}, \tr , \nabla,  d\sigma$ are all with respect to $\mb$. 
%The \emph{Wang-Chru\'sciel-Herzlich mass} is defined by
%\begin{align*}
%	m(g) = H(\sqrt{\kappa+r^2}).
%\end{align*}
The integral has an alternative formula (see, \cite[Theorem 3.3]{Herzlich:2016aa} and \cite[Equation (4.40)]{Barzegar:2017uh}):
\[
	-\tfrac{n-2}{2} m(g, V)= \lim_{r\to \infty} \int_{S_r} (\Ric_g + (n-1)g)(\nabla V,  \nu) \, d\sigma.
\]
\end{definition}

We note the fundamental fact that the limit in the above definition does converge on an ALH manifold. See \cite[Proposition 2.2]{Chrusciel:2003aa}. Since we will use some of the computations later in Lemma~\ref{lemma:functional}, we include the proof below. In fact, the proof includes an explicit formula for the boundary integral on $\Sigma$ for  Lemma~\ref{lemma:functional}, which is not needed in proving the following lemma. 
\begin{lemma}[\cite{Chrusciel:2003aa}]\label{lemma:massfinite}
Let $(\M, \mb)$ be a static, reference manifold with a static potential $V_0$. Let $(M, \gamma)$ be ALH with respect to $(\M, \mb)$ (possibly with nonempty boundary $\Sigma$) and $V$ be a function on $M$ such that $V-V_0\in \C^2_{1-q}(M)$.  Then $m(\gamma, V)<\infty$. 
\end{lemma}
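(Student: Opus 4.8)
\textbf{Proof proposal for Lemma~\ref{lemma:massfinite}.}

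The plan is to show that the difference of the mass integrands over two coordinate spheres $S_{r_1}$ and $S_{r_2}$ can be written as a divergence plus a term that is absolutely integrable on $M \setminus K$, so that the limit defining $m(\gamma, V)$ exists. First I would use the alternative Ricci formula for the integrand and, more to the point, rely on the pointwise identity relating $V \cdot L_\mb^* (\text{something}) $ and the divergence of the mass integrand. Concretely, writing $e = \gamma - \mb$, there is a well-known ``linearized'' identity of the form
\[
\Div_\mb \mathbb{U}(V, e) = V\big(R_\gamma - R_\mb\big) - \big\langle e, L_\mb^* V\big\rangle + Q(V, e),
\]
where $\mathbb{U}(V,e)$ is precisely the vector field whose flux through $S_r$ is the mass integrand in \eqref{eq:mass2}, and $Q(V,e)$ collects terms that are at least quadratic in $e$ together with terms linear in $e$ but with coefficients carrying extra decay. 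I would obtain this by expanding $R_\gamma$ around $\mb$ to first order, integrating the $L_\mb$ term by parts against $V$, and carefully bookkeeping the remainder; the detailed computation already appears in \cite[Proposition 2.2]{Chrusciel:2003aa}, so I would cite it for the algebra and concentrate on the decay estimate of $Q$.

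Next I would estimate each term on the right-hand side in the ALH regime. Since $(\M,\mb)$ is static with potential $V_0$, we have $L_\mb^* V_0 = 0$, and because $V - V_0 \in \C^2_{1-q}(M)$ while $e = \gamma - \mb \in \C^2_{-q}(M\setminus K)$, the term $\langle e, L_\mb^* V\rangle = \langle e, L_\mb^*(V - V_0)\rangle$ is a product of an $O(r^{-q})$ tensor with an $O(r^{1-q})$ tensor (after two derivatives, which on an ALH manifold in the orthonormal frame $\{e_i\}$ do not worsen the weight). Using $d\mu_g = O(r^{n-2})\,dr\,d\sigma_h$ and $q \in (\tfrac n2, n)$, the relevant exponent is $(-q) + (1-q) + (n-2) = n - 1 - 2q < -1$, so this term is in $L^1(M\setminus K)$. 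The quadratic remainder $Q(V,e)$ involves $V \cdot e * e$ and $V \cdot \nabla e * e$ type expressions; with $V = O(r)$ and $e, \nabla e = O(r^{-q})$ the worst exponent is $1 - 2q + (n-2) = n - 1 - 2q < -1$ again, hence also integrable. Finally, the term $V(R_\gamma - R_\mb)$: here $R_\mb$ need not equal $-n(n-1)$ for a general reference manifold, but $R_\mb + n(n-1)$ is a fixed smooth quantity that decays (indeed the reference manifold is ALH, so $r(R_\mb + n(n-1)) \in L^1$), and $r(R_\gamma + n(n-1)) \in L^1(M\setminus K)$ is exactly the defining assumption \eqref{eq:L1}; multiplying by $V = O(r)$ and noting $\Div_\mb$ absorbs one power, $V(R_\gamma - R_\mb)$ is controlled by $r\big(|R_\gamma + n(n-1)| + |R_\mb + n(n-1)|\big)$, which is in $L^1$.

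Having shown $\Div_\mb \mathbb{U}(V,e) \in L^1(M\setminus K)$, I would integrate over the region between $S_{r_1}$ and $S_{r_2}$ (and, if $\Sigma \neq \emptyset$, note that $\Sigma$ is a fixed compact boundary inside $K$, so it contributes a finite fixed boundary term that does not affect convergence of the limit as $r \to \infty$) and apply the divergence theorem to conclude that $r \mapsto \int_{S_r} \mathbb{U}(V,e)(\nu)\,d\sigma$ is Cauchy as $r\to\infty$; hence the limit $m(\gamma, V)$ exists and is finite. I expect the main obstacle to be the bookkeeping in the second step: verifying that every term produced by linearizing the scalar curvature and integrating by parts genuinely carries enough decay — in particular making sure the ``borderline'' terms land strictly below the $L^1$ threshold, which is where the hypothesis $q > \tfrac n2$ is essential, and handling the fact that for a general (non-Einstein) reference manifold $\Ric_\mb + (n-1)\mb$ and $R_\mb + n(n-1)$ are nonzero but decaying, so they must be grouped with the $\gamma$-side curvature terms rather than discarded.
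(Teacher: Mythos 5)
Your argument follows essentially the same route as the paper's: integrate the mass integrand by parts against $V$ (equivalently, use the divergence identity for $\mathbb{U}(V,e)$), Taylor-expand the scalar curvature to first order in $e$, and check that each interior term is in $L^1$ using $V=O(r)$, $e\in\C^{2,\alpha}_{-q}$, $L_\mb^*V=L_\mb^*(V-V_0)=O(r^{1-q})$, the hypothesis $r(R_\gamma+n(n-1))\in L^1$, and $q>\tfrac n2$; the only cosmetic differences are that you linearize at $\mb$ directly and conclude via a Cauchy criterion over annuli rather than rearranging an identity over all of $M$.

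One justification needs fixing: the claim that ``the reference manifold is ALH, so $r(R_\mb+n(n-1))\in L^1$'' is not correct. A general reference metric $\mb=\frac{1}{r^2+k}dr^2+r^2h$ has $R_\mb+n(n-1)=r^{-2}\big(R_h-k(n-1)(n-2)\big)$, so $V\,(R_\mb+n(n-1))\,d\mu_\mb\sim r^{n-3}\,dr\,d\sigma_h$, which is \emph{not} integrable for $n\ge 3$ unless $R_h$ is the appropriate constant; Definition~\ref{def:locallyhyperbolic} imposes the $L^1$ condition on a metric asymptotic to $\mb$, not on $\mb$ itself. What rescues this term is the hypothesis you did not invoke: $(\M,\mb)$ is \emph{static}, hence has constant scalar curvature, and being ALH forces $R_\mb\equiv -n(n-1)$, so the term vanishes identically. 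With that substitution your proof is complete and matches the paper's.
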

\begin{proof}
We extend $\mb$ in $M\setminus K$  to a smooth Riemannian metric everywhere in $M$. We fix an arbitrary ALH metric $g$ with respect to $(\M, \mb)$.  In this proof, we can simply let $g= \mb$; this generality of $g$ is used later in Lemma~\ref{lemma:functional}.
Let $e=\gamma-\mb$. We apply integration by parts (twice) and get 
\begin{align*}
	\int_{M} V L_g e\, d\mu_g &= \int_{M} e\cdot L_g^* V \, d\mu_g\\
	&\quad  +\lim_{r\to \infty} \int_{S_r} \left[V\big((\mathrm{div}\, e) (\nu) -\nu (\tr  \, e) \big)+ (\tr  \,e )\,  \nu (V) - e(\nabla V, \nu)\right] \, d\sigma_g\\
	&\quad - \int_{\Sigma} \left[V\big((\mathrm{div}\, e) (\nu) -\nu (\tr  \, e) \big)+ (\tr  \,e )  \nu (V) -e(\nabla V, \nu)\right] \, d\sigma_g,
\end{align*}
where  the unit normal $\nu$ (pointing to infinity), and $\mathrm{div}, \tr , \nabla$ are all with respect to $g$. The boundary integral on $S_r$ limits to $m(\gamma, V) $ as those terms from the difference of $g$ and $\mb$ vanish in the limit. Re-arranging the above integrals, we get 
\begin{align}\label{eq:mass}
\begin{split}
	m(\gamma, V) &= \int_{M} V L_g e\, d\mu_g - \int_{M} e\cdot L_g^* V \, d\mu_g\\
	&\quad  + \int_{\Sigma} \left[V\big((\mathrm{div}\, e)(\nu) -\nu(\tr  \, e) \big)+ (\tr  \,e ) \, \nu (V) - e(\nabla V, \nu)\right] \, d\sigma_g.
\end{split}
\end{align}
For the first volume integral, by Taylor expansion we have 
\begin{align*}
 L_g e&= L_g (\gamma - g) - L_g (\mb - g) \\
&= R_\gamma - R_g + |\nabla (\gamma -g ) |^2 -  (R_{\mb} - R_g )+  O(|\nabla (\mb - g)|^2\\
& = R_\gamma+n(n-1) + O(r^{-2q}).
\end{align*}
By Lemma~\ref{lemma:linear}, we know that $V_0$ grows at most linearly, so does $V$. Thus, the assumption that $r(R_\gamma+n(n-1))\in L^1$ implies $VL_g e$ is also  integrable. That is, the first volume integral converges. The second integral  also converges because $L_g^* V = L_{\mb}^* V_0+ O(r^{1-q}) = O(r^{1-q}) $. This completes the proof.
\end{proof}

%We often denote by  $B_R$ a compact subset such that $M\setminus B_R \cong M_\kappa \setminus B_R$ via the diffeomorphism, and we denote $S_R = \partial B_R$. Note that we don't assume our ALH manifolds to be \emph{conformally compactifiable}.

%Separately, we say an unbounded open subset $U\subset M$  is a \emph{cone} if $U=(r_0,\infty)\times W$ for  some  $r_0\gg 1$  and  $W$ is an open subset of $N_\kappa$. 

%\begin{definition}
%	Let $(M,g)$ be an ALH manifold of type $\kappa$ for either $k=0$ or $-1$. The mass is defined by 
%	\[m(g)=H(\sqrt{r^2 +\kappa}).\]
%\end{definition}
%It is well-known by P. Chru\'sciel and M. Herzlich \cite{Chrusciel-Herzlich:2003} that the mass $m$ is a geometric invariant (when $\kappa=0$ after normalizing the volume of $(N_0,h_0)$). 

\section{Deform scalar curvature and fix boundary geometry }\label{sec:scalar1}

In this section, we prove  Theorem~\ref{thm:surjectivity0}, which is recalled below.  To see how this theorem is used to prove other main results, skip to the next section. 

\begin{manualtheorem}{\ref{thm:surjectivity0}}
Let $(M, g)$ be an ALH manifold (at rate $q$). Given any scalar function $f\in \mathcal B^{0,\alpha}(M)$, there exists a symmetric $(0,2)$-tensor $h\in \mathcal B^{2,\alpha}(M)$ solving $L_g h =f$. 

Furthermore, there exists $\epsilon>0$ and an open subset $\mathcal{U}\subset g+ \mathcal{B}^{2,\alpha}(M)$ such that given any scalar function $f$ with $\| f \|_{\mathcal{B}^{0,\alpha}(M)} < \epsilon$, there exists $\gamma \in \mathcal{U}$ such that $R_{\gamma} = R_g + f$ in $M$.
\end{manualtheorem}

The majority of the proof is to solve the linearized equation $L_g h=f$. Our strategy of proof follows the approach of Corvino~\cite{Corvino:2000td} in which \emph{compact} manifolds with boundary are considered, with the main difference: we do \emph{not} assume that $L_g^*$ has the trivial kernel because we deal with an ALH end and the key coercivity estimate can be derived without that assumption, see Proposition~\ref{prop:coercivity} below.

%{\color{red}
%Before we proceed, we first note the following two basic lemmas. The first lemma is the \emph{weighted} $H^2$ elliptic estimate, which follows from the standard $H^2$ interior estimate and the scaling feature of the weighted norms. The second lemma relates the weighted norms to the usual, unweighted norms.  See the proofs in Appendix~\ref{sec:weightedestimate}.
%\begin{lemma}\label{le:elliptic}
%Let $s\in \mathbb R$. There exists a constant $C>0$ such that for all $R\gg 1$ and for all $u\in H^2_{\phi, \phi^{-4+2s} \rho (\Omega)}$, we have 
%\[
%	\| u \|_{H^2_{\phi, \phi^{-4+2s} \rho} (B_{R-1})} \le C \left( \| L_g^* u \|_{L^2_{\phi^{2s} \rho} (B_{R})} + \| u \|_{L^2_{\phi^{-4+2s} \rho} (B_{R}) }\right)
%\]
%where the constant $C$ depends only on $n, s$, the $L^\infty$-norm of the coefficients $L_g^*$, and the lower bound of ellipticity of the operator $L_g^*$ in $B_{R}$.  
%\end{lemma}

%\begin{lemma}\label{le:unweighted}
%There is a constant $C>0$ such that for any $w\in H^1_{\phi, \rho} (B_R)$, we have 
%\[
%\| w \phi\rho^{1/2} \|_{H^1(B_R)} \le C\| w \|_{H^1_{\phi,\rho}(B_R)}.
%\]
%\end{lemma} 

%}
In the next two propositions, we derive the coercivity estimate. The first proposition concerns only an exterior region, but it is the key estimate. 
\begin{proposition}[Coercivity estimate on an exterior region]\label{prop:exterior estimate}
	Let  $(M, g)$ be ALH with respect to a reference metric $\mb$. There exists constants $R_0, C>0$ such that for all $R>R_0$ and for any function $u\in \C^\infty_c(M)$ (not necessarily vanishing on the boundary $S_R$), the following estimate holds 
	\[
		\|u\|_{H^2_{\rho}(M\setminus B_R)}\le C\|L^{*}_{g}u\|_{L^{2}_{\rho}(M\setminus B_R)}
	\]
	where  $C$ depends on $n,\delta$. 
\end{proposition}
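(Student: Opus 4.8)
The plan is to establish the coercivity estimate for $L_g^*$ on the exterior region $M\setminus B_R$ by reducing the problem, via the asymptotics of the reference metric, to a weighted estimate for the \emph{model} operator on a genuine ALH end, where an explicit radial computation can be carried out. First I would recall that $L_g^* u = (-\Delta_g u)\,g + \nabla^2_g u - u\,\Ric_g$, so that the principal part involves the full Hessian $\nabla^2 u$ together with $(\Delta u) g$; taking the trace gives $\tr_g L_g^* u = -(n-1)\Delta_g u - u R_g$, and the trace-free part recovers $\nabla^2 u$ up to lower order. Hence controlling $\|L_g^* u\|_{L^2_\rho}$ controls both $\|\Delta_g u\|_{L^2_\rho}$ and $\|\nabla^2 u\|_{L^2_\rho}$ modulo a zeroth-order term $\|u\|_{L^2_\rho}$ (using $R_g \to -n(n-1)$, so the coefficient is bounded). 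The real content is therefore a \emph{Poincaré-type inequality}: one must absorb $\|u\|_{H^2_\rho(M\setminus B_R)}$ into $\|\Delta_g u\|_{L^2_\rho(M\setminus B_R)}$ plus the boundary contribution, and crucially the weight $\rho = r^{-2q+n-\delta}$ must be chosen so that no boundary term on $S_R$ obstructs this (note $u$ need not vanish on $S_R$).

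The key step is the weighted integration-by-parts estimate. Writing $\rho = r^{-2q+n-\delta}$ and integrating $|\nabla u|^2 \rho$ over $M\setminus B_R$, I would integrate by parts to get $\int |\nabla u|^2 \rho = -\int u\,\Delta_g u\,\rho - \int u\,\nabla u\cdot\nabla\rho + \int_{S_R} u\,\nu(u)\,\rho\,d\sigma$. Using Cauchy–Schwarz with a small parameter on the first two terms and the fact that $|\nabla \rho|_g \le C\rho$ (since $\rho$ is a negative power of $r$ and $|\nabla r|_g \sim \sqrt{r^2+k}/r$ is bounded, so $|\nabla\rho|/\rho \sim |{-2q+n-\delta}|\,\sqrt{r^2+k}/r \le C$), one gets a Hardy/Poincaré inequality of the form $\int_{M\setminus B_R}(u^2 + |\nabla u|^2)\rho \le C\int_{M\setminus B_R}(\Delta_g u)^2 \rho + (\text{boundary on }S_R) + (\text{error from }g-\mb)$. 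The sharp constant here depends on the exponent $-2q+n-\delta$; this is exactly where the lower bound on $\delta$ in \eqref{eq:d}, namely $\delta > \tfrac{2(\sqrt{n^2+n-2}-n)}{n-2}$, enters — it is the threshold that makes the zeroth-order coefficient (coming from $-uR_g$, i.e. the $+n(n-1)$ term, combined with the weight derivative terms) land on the favorable side, so that the quadratic form $\int (u^2 + |\nabla u|^2 + |\nabla^2 u|^2)\rho$ is genuinely dominated. I would carry this out first for the exact model metric $\mb$ on the end written in geodesic form $dt^2 + (\text{warping})^2 h$ (so $r\sim e^t$, $\rho \sim e^{(-2q+n-\delta)t}$), where $\Delta_{\mb}$ and $\nabla^2_{\mb}$ have explicit expressions and the estimate becomes a one-variable weighted inequality in $t$ plus the spectral decomposition on $(N,h)$; the constant-curvature-at-infinity structure makes the leading terms exactly computable.

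Next I would handle the boundary term on $S_R$: because $u\in\C^\infty_c$ is arbitrary on $S_R$, the term $\int_{S_R} u\,\nu(u)\,\rho$ is not automatically signed. The remedy is to choose the form of the inequality so this term can be absorbed using a trace inequality on the collar $S_R\times[R, 2R]$ (or rather, to prove the estimate with a $\int_{S_R}(u^2 + |\nu(u)|^2)$ term on the right and then note that for $u$ supported away from a neighborhood... — actually, since we may shrink $M\setminus B_R$, the cleanest route is to prove the estimate on $M\setminus B_{R}$ controlling the $S_{R}$ boundary data by $\|L_g^* u\|_{L^2_\rho(B_{2R}\setminus B_R)}$ via interior elliptic estimates, or simply to observe that the boundary term, being at $r=R$, contributes with weight $\rho(R) = R^{-2q+n-\delta}$ and can be dominated after integrating the differential inequality outward). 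Finally, the passage from $\mb$ to $g$: since $g - \mb \in \C^{2,\alpha}_{-q}(M\setminus K)$ with $q>0$, the difference $L_g^* - L_{\mb}^*$ has coefficients that are $O(r^{-q})$ relative to the model, so for $R$ large enough this perturbation is absorbed into the left side (standard for $R_0$ sufficiently large). \textbf{The main obstacle} I anticipate is pinning down the sharp weighted Poincaré/Hardy constant in the model computation — keeping careful track of how the three contributions (the $(\Delta u)^2$ term, the weight-derivative cross terms $u\,\nabla u\cdot\nabla\rho$, and the zeroth-order curvature term $u^2 R_g$) combine across the spectral modes of $(N,h)$, and verifying that the inequality $\delta > \tfrac{2(\sqrt{n^2+n-2}-n)}{n-2}$ is precisely what guarantees positivity of the resulting quadratic form uniformly in the eigenvalues of $\Delta_h$; the lowest mode (constants on $N$) is typically the delicate case and must be checked by hand.
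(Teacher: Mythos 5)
Your overall architecture --- reduce to the reference metric, prove a weighted integration-by-parts estimate on the end, and absorb the $O(r^{-q})$ perturbation by taking $R_0$ large --- matches the paper. But the core of the argument has a genuine gap: the boundary term on $S_R$. You correctly observe that $\int_{S_R} u\,\nu(u)\,\rho\,d\sigma$ has no sign, but none of your proposed remedies works. A trace inequality on a collar bounds $\int_{S_R}u^2$ by the $H^1$-norm of $u$ on that collar, which is part of what you are trying to prove, so it is circular without a small factor. Interior elliptic estimates on an annulus $B_{2R}\setminus B_R$ bound $u$ by $\|L_g^*u\|$ \emph{plus} a lower-order term $\|u\|_{L^2}$ on a larger set; that lower-order term cannot be dropped (indeed $L_g^*$ may have nontrivial kernel), so this does not eliminate the boundary data either. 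The paper's device is the heart of the proof and is absent from your proposal: it starts from the manifestly nonnegative boundary integral $\int_{S_R}(\beta u-\nu(u))^2 r^a\,d\sigma\ge 0$ with $a=-2q+n-\delta$, and converts every resulting boundary integral into a volume integral over $M\setminus B_R$ via explicit Green-type identities for $r^a$ (using $\Delta r^a=a(a-1+n)r^a$). The free parameter $\beta$ is then tuned ($\beta=a/2$ when $-2\le a<-\delta$, $\beta=-1$ when $-n-\delta<a<-2$) so that the coefficients of $\int u^2 r^a$ and $\int|\nabla u|^2 r^a$ come out strictly negative; the lower bound on $\delta$ in \eqref{eq:d} is precisely what makes the relevant polynomial $c_1(a,\tfrac a2)$ negative at $a=-\delta$.

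A second, related gap: your integration by parts $\int|\nabla u|^2\rho=-\int u\Delta u\,\rho-\int u\,\nabla u\cdot\nabla\rho+\int_{S_R}u\,\nu(u)\,\rho$ followed by Cauchy--Schwarz bounds $\int|\nabla u|^2\rho$ in terms of $\int u^2\rho$, $\int(\Delta u)^2\rho$, and the boundary term; it does \emph{not} bound $\int u^2\rho$, which is the quantity that actually needs to be controlled and is the Poincar\'e content you name but do not supply. The paper obtains it from the separate identity $\int_{S_R}u^2\tfrac1a\nu(r^a)\,d\sigma=\int_{M\setminus B_R}\big(-(a-1+n)u^2-2u\,\nu(u)\big)r^a\,d\mu$ combined with the completed square above. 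Also note that no spectral decomposition on $(N,h)$ is needed or used: the paper works with $Tu=\nabla^2u-u\,\mb$ (equivalent to $L^*_{\mb}$ up to decaying terms), the argument is purely radial and uniform over $N$, and the $\delta$-threshold arises from an explicit quadratic inequality in $a$ rather than from a lowest-mode analysis.
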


\begin{proof}
It suffices to obtain the desired estimate for $g$ identical to the reference metric $ \mb = \frac{1}{r^2+k} dr^2 + r^2 h$ on $M \setminus B_R$. Once it is obtained for the reference metric, the estimate for a general ALH metric $g$ can be derived automatically (by enlarging $R_0$ if necessary) since the error terms from the difference of $g$ and $\mb$ are negligible. In fact, we can further assume $k=0$ since the difference from the presence $k$ is also negligible. For the rest of the proof, all the geometric quantities (covariant derivatives, volume forms, \dots) are computed with respect to $\mb$.  

Define the differential operator
\[
	T u= \nabla^2 u - u \mb.
\]
We can rewrite  $Tu= L_{\mb}^* u - \frac{1}{n-1} (\tr \, L_{\mb}^* u) \mb + (-\Ric_{\mb} + \frac{1}{n-1} R_{\mb} \mb + \mb)u$. Since $-\Ric_{\mb} + \frac{1}{n-1} R_{\mb} \mb + \mb$ goes to zero at infinity,  we just need to show that there exist constants $R_0, C>0$ such that for all $R> R_0$,
\[
		\|u\|_{H^2_{\rho}(M\setminus B_R)}\le C\|Tu\|_{L^{2}_{\rho}(M\setminus B_R)}.
\]
Furthermore, because $\|\nabla^2 u\|_{L^2_\rho(M\setminus B_R)}$ can be bounded from above by $\|Tu\|_{L^{2}_{\rho}(M\setminus B_R)}$ and a constant multiple of $\| u \|_{L^{2}_{\rho}(M\setminus B_R)}$, it suffices to derive the following $H^1$-estimate:
\[
	\|u\|_{H^1_{\rho}(M\setminus B_R)}\le C\|Tu\|_{L^{2}_{\rho}(M\setminus B_R)}.
\] 
Recall the definition of $H^2_{\rho}(M\setminus B_R)$ in \eqref{eq:norm}. We can re-express the desired estimate as 
\begin{align}
 \label{eq:coercivity}
	\int_{M\setminus B_R} ( u^2 +|\nabla u|^2)r^{-2q+n-\delta}\, d\mu \le C\int_{M\setminus B_R} | Tu|^2 \, r^{-2q+n-\delta} \, d\mu. 
\end{align}
Set the exponent $a = -2q+n-\delta$. Then the range for $q\in (\frac{n}{2}, n)$ implies that 
\begin{align}\label{eq:a}
	-n-\delta <a<-\delta.
\end{align}

We proceed to prove \eqref{eq:coercivity}.  We denote $\nu = \tfrac{\nabla r}{r} = r\partial r$ to be the unit radial vector. Note that $\nu(r^a) = ar^a, \nabla r^a = a r^a\nu$, and $\Delta r^a = a(a-1+n) r^a $. As preparation, we compute the following boundary integrals using integration by parts or Green's formula (note the $\nu$ on $S_R$ is the unit normal pointing to infinity): 
\begin{align*}
	\int_{S_R} u^2 \tfrac{1}{a}\nu(r^a) \, d\sigma&= - \int_{M\setminus B_R} \tfrac{1}{a} u^2 \Delta r^a \, d\mu- \int_{M\setminus B_R} \nabla u^2 \cdot \tfrac{1}{a} \nabla r^a\, d\mu\\
	&= \int_{M\setminus B_R} \Big( -u^2 (a-1+n)  - 2u \nu(u)  \Big)r^a\, d\mu\\
	\int_{S_R} \Big(u^2 \nu(r^a) - \nu(u^2) r^a\Big)\, d\sigma&= -\int_{M\setminus B_R} \Big(u^2 \Delta r^a - r^a \Delta u^2\Big)\, d\mu\\
%	&=-\int_{M\setminus B_R} \Big(u^2 a (a-1+n) - 2u \Delta u-2|\nabla u|^2 \Big) r^a\\
%	&=-\int_{M\setminus B_R} \Big(u^2 a (a-1+n) - 2u (\tr  Tu) -2nu^2 - 2|\nabla u|^2 \Big) r^a\\
	&=\int_{M\setminus B_R} \Big( \big (- a(a-1+n) +2n\big) u^2  + 2 |\nabla u|^2  + 2u \tr  (Tu) \Big)r^a\, d\mu\\
	\int_{S_R} \Big( |\nabla u|^2 -u^2 \Big) \tfrac{1}{a} \nu(r^a)\, d\sigma& = -\int_{M\setminus B_R} \Big(|\nabla u|^2 - u^2 \Big) \tfrac{1}{a} \Delta r^a\, d\mu - \int_{M\setminus B_R}\nabla (|\nabla u|^2 - u^2)\cdot \tfrac{1}{a}  \nabla r^a\, d\mu\\
%	&=\int_{M\setminus B_R}\left[ \Big(u^2-|\nabla u|^2  \Big)(a-1+n)  - 2\nabla^2 u(\nabla u, \nu) + 2u \nabla u\cdot \nu)\right] r^a\\
	&=\int_{M\setminus B_R}\bigg(\Big(u^2-|\nabla u|^2  \Big)(a-1+n)  - 2\, (Tu)(\nabla u, \nu)  \bigg) r^a\, d\mu.
\end{align*}
Let $\beta$ be a real number depending on $a$ to be determined. Our estimate is based on this inequality
\begin{align}
	0&\le \int_{S_R} (\beta u - \nu(u))^2r^a\, d\sigma = \int_{S_R} \Big(\beta^2 u^2 - \beta \nu(u^2) + (\nu(u))^2 \Big)r^a \,d\sigma\notag\\
	&\le \int_{S_R} \Big((\beta^2+1)u^2  - \beta \nu(u^2)  + \big(|\nabla u|^2 - u^2\big)\Big) r^a\,d\sigma \notag \quad \quad \mbox{(using that $|\nu(u)|\le |\nabla u|$)}\\
	&=\int_{S_R} \bigg(\Big(\beta^2+1 - a\beta\Big) u^2 r^a + \beta \Big(u^2 \nu(r^a) - \nu(u^2) r^a\Big) + \Big(|\nabla u|^2 - u^2\Big)r^a\bigg)\, d\sigma. \label{eq:basic}
\end{align}
Since the polynomial $\beta^2  +1-a\beta= \left(\beta - \tfrac{a}{2} \right)^2 + 1-\frac{a^2}{4}\ge 0$ provided $ -2 \le a \le 2$. Together with the assumption \eqref{eq:a} for $a$, we separate the cases into   $-2 \le a<-\delta$ and $-n-\delta<a<-2$.

\vspace{10pt}
\noindent {\bf Case 1:}  $-2 \le a< -\delta$. Note that we have $\beta^2 + 1 -a\beta \ge 0$ in this case. Use \eqref{eq:basic}, substituting $r^a = \frac{1}{a} \nu (r^a)$, and  replace the boundary integrals on $S_R$ by our previous computations:
\begin{align*}
	0&\le\int_{S_R} \Big(\beta^2+1 - a\beta\Big) u^2 \tfrac{1}{a}\nu(r^a) + \beta \Big(u^2 \nu(r^a) - \nu(u^2) r^a\Big) + \Big(|\nabla u|^2 - u^2\Big)\tfrac{1}{a} \nu(r^a) \, d\sigma\\
	&= \int_{M\setminus B_R}\Big(  \beta\big(2n-\beta(a-1+n)\big) u^2 + \big( 2\beta - (a-1+n) \big) |\nabla u|^2 \Big)r^a \, d\mu\\
	&\quad + \int_{M\setminus B_R}\Big( -2 (\beta^2 + 1 - a\beta )  u\nu(u) + \big( 2\beta u  \tr \, (Tu)   - 2\, (Tu) (\nabla u, \nu) \big) \Big) r^a\, d\mu \\
	&\le \int_{M\setminus B_R}  \Big(\beta \big(2n-\beta(a-1+n)\big) + \beta^2 + 1 -a\beta \Big) u^2 r^a\, d\mu \\
	&\quad + \int_{M\setminus B_R} \Big( 2\beta - (a-1+n)+ \beta^2 + 1 -a\beta \Big) |\nabla u|^2 r^a \, d\mu\\
	&\quad  + \int_{M\setminus B_R} \Big( 2 \beta u  \tr  (Tu)   - 2\, (Tu)(\nabla u, \nu) \Big)r^a\, d\mu
\end{align*}
where we use the Cauchy-Schwarz inequality for the term $-2 u\nu(u) \le |u|^2 + |\nabla u|^2$ and that $\beta^2+1-a\beta\ge 0$. We denote the ``coefficients'' of $u^2 r^a$ and $|\nabla u|^2 r^a$ by, respectively:
\begin{align*}
	c_1(a, \beta) &:=  \beta\big(2n-\beta(a-1+n) \big) + \beta^2  +1-a\beta\\
	c_2(a, \beta)&:= 2\beta - (a-1+n)+ \beta^2+1-a\beta.
\end{align*}
Let $\beta=\frac{a}{2}$ so that $ \beta^2  -a\beta+1$ takes the minimum value $1-\tfrac{a^2}{4}$. We verify that $c_1, c_2$ are negative constants depending on $n, \delta$:   \begin{align*}
 	c_1(a, \tfrac{a}{2})&= \tfrac{a}{2} \left(2n - \tfrac{a}{2} (a-1+n)\right) + 1 - \tfrac{a^2}{4}\\
	 &= an - \tfrac{a^2}{4} (a+n) + 1\\
	 &\le an - \tfrac{a^2}{4} (n-2) + 1\\
 &	= - \tfrac{n-2}{4} \left( a - \tfrac{2n}{n-2}\right)^2  + \tfrac{n^2}{n-2} +1.
 \end{align*}
The right hand side, as a polynomial of $a\in [-2, -\delta)$, has an upper bound at $a=-\delta$. Evaluating the polynomial at $a=-\delta$, one can verify that $- \frac{n-2}{4} \left( -\delta - \frac{2n}{n-2}\right)^2  + \frac{n^2}{n-2} +1<0$, provided that $\delta$ stays in the  range~\eqref{eq:d}. The estimate for $c_2$ is obvious since we  have
 \begin{align*}
 	c_2(a, \tfrac{a}{2})&= a - (a-1+n) + 1 - \tfrac{a^2}{4}= 2-n - \tfrac{a^2}{4}\le 2-n.  
 \end{align*}

\vspace{10pt}
\noindent {\bf Case 2:} $-n-\delta< a< -2$. We will consider $\beta$ so that $\beta^2  -a\beta+1\le 0$ (e.g., we will set $\beta=-1$ shortly), and  thus the first integrand in \eqref{eq:basic} can be dropped: 
\begin{align*}
0&\le \int_{S_R} \beta \Big(u^2 \nu(r^a) - \nu(u^2) r^a\Big) + \Big(|\nabla u|^2 - u^2\Big)r^a\, d\sigma\\
	&= \int_{M\setminus B_R}  \Big( \beta \big(-a(a-1+n) + 2n\big) + (a-1+n) \Big) u^2 r^a\, d\mu\\
	&\quad + \int_{M\setminus B_R} \Big( 2\beta - (a-1+n) \Big) |\nabla u|^2 r^a \, d\mu+ \int_{M\setminus B_R} \Big( 2 \beta u  \tr (Tu)   - 2\, (Tu)(\nabla u, \nu) \Big)r^a \, d\mu.
\end{align*}
We denote the ``coefficients'' of $u^2 r^a$ and $|\nabla u|^2 r^a$ by, respectively,
\begin{align*}
	c_3(a, \beta) &:=  \beta \big(-a(a-1+n) + 2n\big) + (a-1+n)\\
	c_4(a, \beta)&:= 2\beta - (a-1+n).
\end{align*}
Let $\beta=-1$. Then 
\begin{align*}
	c_3(a, -1) &:=  -(-a(a-1+n) + 2n) + (a-1+n)\\
	&=(a+1)(a-1+n)-2n\\
	c_4(a,- 1)&:= -2 - (a-1+n)\\
	&=-a-n-1.
\end{align*}
One can verify that for  $-n-\delta<a<-2$, both $c_3$ and $c_4$ are negative constants that can depend on $n, \delta$. 

\vspace{8pt}

To complete the proof for both cases, we find a constant $\epsilon:=\epsilon(n,\delta)>0$ such that 
\begin{align*}
0 &\le -2\epsilon \int_{M\setminus B_R}  (u^2 + |\nabla u|^2) r^a \,d\mu + \int_{M\setminus B_R} \Big( 2 \beta u  \tr (Tu)   - 2\, (Tu)(\nabla u, \nu) \Big)r^a \, d\mu.
\end{align*}
Noticing that by Cauchy-Schwarz, there exists a positive constant $C(\epsilon)$ such that 
\begin{align*}
	2 \beta u ( \tr  (Tu) ) & \le \epsilon u^2 + C(\epsilon) |Tu|^2\\
	 - 2Tu(\nabla u, \nu) &\le \epsilon |\nabla u|^2 +C(\epsilon) |Tu|^2.
\end{align*}
Combining the above inequalities, we obtain the desired estimate~\eqref{eq:coercivity}:
\[
	 \int_{M\setminus B_R}  (u^2 + |\nabla u|^2) r^a\, d\mu\le C \int_{M\setminus B_R} |Tu|^2 r^a\, d\mu. 
\]

\end{proof}

\begin{proposition}[Coercivity estimate]\label{prop:coercivity}
Let $(M, g)$ be ALH. There exists a positive constant $C$ such that for all $u\in H^2_\rho(M)$
\begin{align}\label{eq:Basic estimate on M}
	\| u \|_{H^2_\rho(M)}\le C\| L^*_g u \|_{L^2_\rho(M)}.
\end{align}
\end{proposition}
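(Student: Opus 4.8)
The plan is to reduce the estimate \eqref{eq:Basic estimate on M} to three localized ingredients — the exterior coercivity of Proposition~\ref{prop:exterior estimate}, a boundary analogue of it in a thin collar of $\Sigma$, and interior elliptic regularity on the compact part of $M$ — and then to replace the ``no-kernel'' hypothesis used by Corvino~\cite{Corvino:2000td} by a compactness-and-unique-continuation argument. As a preliminary step I would note that Proposition~\ref{prop:exterior estimate} extends from $\C^\infty_c(M)$ to all of $H^2_\rho(M)$ by density: since $g$ is ALH, $v\mapsto L_g^* v$ is bounded from $H^2_\rho(M)$ into $L^2_\rho(M)$, so the estimate over $M\setminus B_R$ passes to limits; the same remark applies to the collar estimate below.

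Next I would establish a collar coercivity estimate: there exist $\epsilon_0,C>0$ such that, with $\Omega_\epsilon=\{x\in M: d(x)<\epsilon\}$,
\[
\|u\|_{H^2_\rho(\Omega_{\epsilon_0})}\le C\,\|L_g^* u\|_{L^2_\rho(\Omega_{\epsilon_0})}\qquad\text{for all }u\in H^2_\rho(M).
\]
This is the boundary counterpart of the interior coercivity lemma in \cite{Corvino:2000td}, proved by the same mechanism as Proposition~\ref{prop:exterior estimate}: in Fermi coordinates about $\Sigma$ one integrates $u^2\rho$, $|\nabla u|^2\rho$, $|\nabla^2 u|^2\rho$ by parts against $L_g^* u=\nabla^2u-(\Delta u)\,g-u\Ric_g$, and, since $\rho=e^{-4/d}$ near $\Sigma$, the resulting terms carrying $\nabla\log\rho=4\nabla d/d^{2}$ and $\Delta\log\rho$ dominate and — for this exponent — carry the favorable sign, so no condition on $\ker L_g^*$ is needed. (If $M$ is boundaryless this step is vacuous.)

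The heart of the proof is then a contradiction argument. Suppose \eqref{eq:Basic estimate on M} fails, so there are $u_k\in H^2_\rho(M)$ with $\|u_k\|_{H^2_\rho(M)}=1$ and $\|L_g^* u_k\|_{L^2_\rho(M)}\to0$. Fix $R>R_0$. The (extended) Proposition~\ref{prop:exterior estimate} and the collar estimate give $\|u_k\|_{H^2_\rho(M\setminus B_R)}\to0$ and $\|u_k\|_{H^2_\rho(\Omega_{\epsilon_0})}\to0$, hence $\|u_k\|_{H^2_\rho(B_R\setminus\Omega_{\epsilon_0})}\to1$. On the compact region $B_{R+1}\setminus\Omega_{\epsilon_0/2}$, which keeps a definite distance from $\Sigma$ so that $\rho$ is pinched between positive constants, the standard interior estimate for the injective-elliptic operator $L_g^*$ gives $\|u_k\|_{H^2(B_R\setminus\Omega_{\epsilon_0})}\le C\big(\|L_g^* u_k\|_{L^2(B_{R+1}\setminus\Omega_{\epsilon_0/2})}+\|u_k\|_{L^2(B_{R+1}\setminus\Omega_{\epsilon_0/2})}\big)$; the left side stays bounded below while the first term on the right tends to $0$, so $\|u_k\|_{L^2(B_{R+1}\setminus\Omega_{\epsilon_0/2})}$ stays bounded below. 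After passing to a subsequence (and a diagonal argument over a compact exhaustion of $\Int M$), Rellich's theorem yields $u_k\to u_\infty$ strongly in $L^2_{\mathrm{loc}}(\Int M)$ and weakly in $H^2_{\mathrm{loc}}(\Int M)$, with $L_g^* u_\infty=0$ on $\Int M$ and $u_\infty\not\equiv0$. But $\|u_k\|_{H^2_\rho(M\setminus B_R)}\to0$ forces $u_\infty=0$ on the nonempty open set $\{r>R\}$, so by unique continuation for $L_g^*$ — since $L_g^* u_\infty=0$ forces $\nabla^2 u_\infty=u_\infty\,(\Ric_g-\tfrac{R_g}{n-1}g)$, any solution vanishing on a nonempty open subset of the connected manifold $\Int M$ vanishes identically (compare the ODE along geodesics underlying Lemma~\ref{lemma:linear}) — we get $u_\infty\equiv0$, a contradiction. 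Hence \eqref{eq:Basic estimate on M} holds.

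I expect the collar estimate to be the main obstacle: in contrast with the situation at infinity, near $\Sigma$ the logarithmic weight $\log\rho$ and its derivatives are unbounded, so one must carefully identify which integration-by-parts terms dominate and check that the exponent in $\rho=e^{-4/d}$ makes their signs cooperate. Once that estimate together with Proposition~\ref{prop:exterior estimate} localizes any putative ``near-kernel'' to the compact bulk of $M$, unique continuation cleanly does the work of the no-kernel assumption of \cite{Corvino:2000td}, and the remaining compactness step is routine.
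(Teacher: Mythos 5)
There is a genuine gap, and it sits exactly where you predicted the main difficulty would be: the ``collar coercivity estimate''
$\|u\|_{H^2_\rho(\Omega_{\epsilon_0})}\le C\|L_g^*u\|_{L^2_\rho(\Omega_{\epsilon_0})}$, asserted for all $u\in H^2_\rho(M)$ with no lower-order term, is false. Take $(M,g)$ to be a static ALH manifold whose static potential does not vanish near $\Sigma$, e.g.\ the Birmingham--Kottler manifold $\big([1,\infty)\times N,\ r^{-2}dr^2+r^2h\big)$ with $V=r$, so $V=1$ on $\Sigma$. Let $\chi$ be a cutoff equal to $1$ on $\Omega_{\epsilon_0}$ and supported near $\Sigma$; then $u=\chi V$ lies in $H^2_\rho(M)$ (the weight decays super-polynomially at $\Sigma$, so such functions are in the completion), $L_g^*u=0$ on $\Omega_{\epsilon_0}$, and your estimate would force $V\equiv 0$ there. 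The structural reason is that the mechanism of Proposition~\ref{prop:exterior estimate} does not transplant to $\Sigma$: at infinity the kernel elements of $L_g^*$ grow linearly and are excluded by the polynomial weight $r^{-2q+n-\delta}$ for $\delta$ in the range~\eqref{eq:d}, whereas near $\Sigma$ the kernel elements are bounded smooth functions and the weight $e^{-4/d}$ cannot see them --- no exponent makes the $\nabla\log\rho$ and $\Delta\log\rho$ terms carry a sign that defeats the kernel. (This is also why Corvino's compact-domain theorem genuinely requires the no-kernel hypothesis.) Without the collar estimate your contradiction argument fails: the normalized sequence $u_k$ may concentrate its weighted $H^2_\rho$-mass in shrinking collars around $\Sigma$, the strong $L^2_{\mathrm{loc}}(\Int M)$ limit is then zero, and no contradiction is reached.

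The paper's treatment of the collar is weaker but correct: it uses the weighted \emph{elliptic} estimate $\|u\|_{H^2_\rho(B_R)}\le C\big(\|L_g^*u\|_{L^2_\rho(B_R)}+\|u\|_{L^2_\rho(B_R)}\big)$ from \cite[Lemma 5.1]{Corvino:2020ty} (following \cite{Corvino:2000td}), which keeps the lower-order term, and removes that term by compactness rather than coercivity: since $u\mapsto u\rho^{1/2}$ is bounded $H^1_\rho(B_R)\to H^1(B_R)$, Rellich gives a subsequence converging in $L^2_\rho(B_R)$, and the global estimate with the lower-order term then shows $u_i$ is Cauchy in $H^2_\rho(M)$. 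The limit $u$ satisfies $\|u\|_{H^2_\rho(M)}=1$ and $L_g^*u=0$, and is killed not by unique continuation from the exterior but by Lemma~\ref{lemma:linear} together with Remark~\ref{remark:dual}: a nontrivial solution of $L_g^*u=0$ must grow linearly in a cone, which is incompatible with membership in $H^2_\rho(M)$. Your unique-continuation remark is correct in itself (solutions satisfy $\nabla^2u=u\,(\Ric_g-\tfrac{R_g}{n-1}g)$ and are determined by their $1$-jet along geodesics), but it can only be deployed once a nontrivial limit exists, and producing one is precisely the role of the compactness step near $\Sigma$ that your collar estimate was meant to replace. Your extension of Proposition~\ref{prop:exterior estimate} to $H^2_\rho(M)$ by density and your use of it on the exterior are fine.
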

\begin{proof}
We  \emph{claim} that there exist constants $R_0, C>0$ such that  for $R> R_0$ and for all $u\in H^2_\rho(M)$,  the following estimate holds:
\begin{align}\label{eq:elliptic}
\| u \|_{H^2_\rho(M)}  \le C \left(\| L_g^* u \|_{L^2_\rho(M)}+ \| u \|_{L^2_\rho (B_R)}\right)
\end{align}
We just need to derive the estimate for $u\in \C^\infty_c(M)$. Let $R\ge R_0$ where $R_0$ is from Proposition~\ref{prop:exterior estimate}. By \eqref{eq:exterior} and applying   Proposition~\ref{prop:exterior estimate} in the second line below:
\begin{align*}
	\| u \|_{H^2_\rho(M)} &\le \| u \|_{H^2_\rho(B_R)} + \| u \|_{H^2_\rho(M\setminus B_R)}\\
	&\le \| u \|_{H^2_\rho(B_R)} + C \| L_g^* u \|_{L^2_\rho(M\setminus B_R)}.
\end{align*}	
For the norm in $B_R$, we have the standard elliptic estimate
\[
	 \| u \|_{H^2_\rho(B_R)} \le C \left(\| L_g^* u \|_{L^2_\rho(B_R)} + \| u \|_{L^2_\rho (B_R)}\right).
\]
See \cite[Lemma 5.1]{Corvino:2020ty}, which follows the proof of \cite[Proposition 3.1-3.2, Theorem 3]{Corvino:2000td}. (Technically speaking,  \cite[Lemma 5.1]{Corvino:2020ty} does not directly apply since our weight function $\rho$ does not decay to zero toward the ``outer'' boundary $S_R$, but the estimate toward $S_R$ is a more standard estimate because the weight function $\rho$ is positive there.) Combining the previous two inequalities and enlarging the constant $C$,  we have the following estimate:
\begin{align*}
\| u \|_{H^2_\rho(M)}  \le C \left(\| L_g^* u \|_{L^2_\rho(M)}+ \| u \|_{L^2_\rho (B_R)}\right)
\end{align*}
where we use \eqref{eq:exterior} to combine the integrals of $L_g^* u$ on $B_R$ and on $M\setminus B_R$. 

To prove the theorem, we argue by contradiction. Suppose the estimate \eqref{eq:Basic estimate on M} does not hold for all $u\in H^2_\rho(M)$. Then there exists a sequence $u_i\in H^2_\rho(M)$, which we normalize to make $\| u_i \|_{H^2_\rho(M)}=1$, such that 
\begin{align} \label{eq:limit}
	 \| L_g^* u_i \|_{L^2_\rho(M)} \le \frac{1}{i} \| u_i \|_{H^2_\rho(M)} \to 0 \quad \mbox{ as } i\to \infty.
\end{align}

It is direct to verify that there is $C>0$ such that $\| u_i \rho^{1/2} \|_{H^1(B_R)} \le C\| u_i \|_{H^1_\rho(B_R)}$ (see, e.g. \cite[Proposition 2.10]{Corvino:2020ty}). By the assumption  $ \| u_i \|_{H^2_\rho(M)}=1$ and Rellich compactness theorem (with respect to the usual, unweighted norms), after passing to a subsequence, $u_i \rho^{1/2}$ converges to some $f$ in $L^2(B_R)$. It implies that $\| u_i - f \rho^{-1/2} \|_{L^2_\rho(B_R)} \to 0$  as $i\to \infty$. We can now use \eqref{eq:elliptic} to show that the sequence $u_i$ is a Cauchy sequence in $H^2_\rho (M)$:
\begin{align*}
	\| u_i - u_j \|_{H^2_\rho(M)}& \le C \left(\| L_g^* (u_i - u_j) \|_{L^2_\rho(M)}+ \| u_i -u_j \|_{L^2_\rho (B_R)}\right)\\
	&\le C \left(\| L_g^* u_i\|_{L^2_\rho(M)} + \| L_g^* u_j \|_{L^2_\rho(M)}+\| u_i -u_j \|_{L^2_\rho (B_R)}\right) \to 0 \quad \mbox{ as } i, j\to \infty.
\end{align*}
 Denote by $u\in H^2_\rho(M)$ the limit of the sequence $u_i$. So we have that $\| u \|_{H^2_\rho(M)}=1$ and $u$ is a weak solution to $L_g^* u=0$ by \eqref{eq:limit}. By elliptic regularity, $u\in \C^2_{\mathrm{loc}}(M)$ and by Lemma~\ref{lemma:linear}, either $u$ is identically zero or $u$ grows linearly in a cone. The latter cannot occur since $u\in H^2_\rho(M)$ (see Remark~\ref{remark:dual}), so $u$ must vanish identically, but it contradicts that $\| u \|_{H^2_\rho(M)}=1$.
\end{proof}

Once the coercivity estimate is derived, we can use the variational argument as in \cite{Corvino:2000td} for any given $f$ to construct a weak solution $h$ to $L_g h=f$.  Given $f\in L^2_{\rho^{-1}}(M)$, we define the functional $\mathcal{G}: H^2_\rho(M)\to \mathbb{R}$ by
\[
		\mathcal{G}(u) = \int_M \left(\tfrac{1}{2} \rho |L_g^* u|^2 - fu \right)\, d\mu_g.
\]
\begin{theorem}\label{thm:weak-solution}
Let $(M, g)$ be ALH. For any $f\in L^2_{\rho^{-1}}(M)$, $\mathcal{G}$ has a global minimizer $u\in H^2_\rho(M)$, and $u$ weakly solves
\[
	L_g ( \rho L_g^* u) = f\quad \mbox { in M}
\]
with  the estimate 
\begin{align}\label{eq:u}
\| u \|_{H^2_\rho(M)}\le 2C \| f \|_{L^2_{\rho^{-1}}(M)}
\end{align}
 where $C$ is the constant from Proposition~\ref{prop:coercivity}.  
\end{theorem}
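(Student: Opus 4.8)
The plan is to apply the direct method of the calculus of variations to the convex functional $\mathcal{G}$ on the Hilbert space $H^2_\rho(M)$, using the coercivity estimate from Proposition~\ref{prop:coercivity} as the essential ingredient that makes the argument work without any no-kernel assumption. First I would check that $\mathcal{G}$ is well-defined and finite on $H^2_\rho(M)$: the term $\int_M \tfrac12 \rho |L_g^* u|^2\,d\mu_g = \tfrac12 \|L_g^* u\|_{L^2_\rho(M)}^2$ is finite since $L_g^*$ involves at most second derivatives of $u$ which are controlled by the $H^2_\rho$-norm, and the linear term $\int_M fu\,d\mu_g$ is finite and continuous by Cauchy-Schwarz in the dual pairing $L^2_{\rho^{-1}}(M)\times L^2_\rho(M)$, giving $|\int_M fu\,d\mu_g|\le \|f\|_{L^2_{\rho^{-1}}(M)}\|u\|_{L^2_\rho(M)}\le \|f\|_{L^2_{\rho^{-1}}(M)}\|u\|_{H^2_\rho(M)}$.

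Next I would establish coercivity and weak lower semicontinuity of $\mathcal{G}$. By Proposition~\ref{prop:coercivity}, $\|u\|_{H^2_\rho(M)}\le C\|L_g^* u\|_{L^2_\rho(M)}$, so
\[
\mathcal{G}(u)\ge \tfrac{1}{2C^2}\|u\|_{H^2_\rho(M)}^2 - \|f\|_{L^2_{\rho^{-1}}(M)}\|u\|_{H^2_\rho(M)},
\]
which tends to $+\infty$ as $\|u\|_{H^2_\rho(M)}\to\infty$; hence any minimizing sequence is bounded in $H^2_\rho(M)$ and, after passing to a subsequence, converges weakly to some $u\in H^2_\rho(M)$. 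The map $u\mapsto L_g^* u$ is bounded linear from $H^2_\rho(M)$ to $L^2_\rho(M)$, so $L_g^* u_i \rightharpoonup L_g^* u$ weakly in $L^2_\rho(M)$, and since the $L^2_\rho$-norm is weakly lower semicontinuous, $\liminf \tfrac12\|L_g^* u_i\|_{L^2_\rho}^2\ge \tfrac12\|L_g^* u\|_{L^2_\rho}^2$; the linear term passes to the limit by weak convergence paired against the fixed functional $f\in(H^2_\rho(M))^*$. Therefore $\mathcal{G}(u)\le \liminf \mathcal{G}(u_i) = \inf \mathcal{G}$, so $u$ is a global minimizer.

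Then I would derive the Euler-Lagrange equation: for any $\psi\in C^\infty_c(M)$, the function $t\mapsto \mathcal{G}(u+t\psi)$ is a smooth (quadratic) function of $t$ with a critical point at $t=0$, and computing $\frac{d}{dt}\big|_{t=0}$ gives
\[
\int_M \big(\rho\, L_g^* u \cdot L_g^*\psi - f\psi\big)\,d\mu_g = 0,
\]
which is precisely the statement that $u$ weakly solves $L_g(\rho L_g^* u) = f$ in $M$ (integrating by parts twice formally, but the weak form above is the rigorous meaning); one should note the boundary terms that would arise upon integration by parts correspond to natural boundary conditions automatically satisfied in the weak formulation, which is the reason this approach bypasses the no-kernel hypothesis. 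Finally, to get the estimate \eqref{eq:u}, I would test the minimality inequality $\mathcal{G}(u)\le \mathcal{G}(0)=0$, yielding $\tfrac12\|L_g^* u\|_{L^2_\rho(M)}^2 \le \int_M fu\,d\mu_g \le \|f\|_{L^2_{\rho^{-1}}(M)}\|u\|_{H^2_\rho(M)}$; combining with Proposition~\ref{prop:coercivity} gives $\tfrac{1}{2C^2}\|u\|_{H^2_\rho(M)}^2\le \|f\|_{L^2_{\rho^{-1}}(M)}\|u\|_{H^2_\rho(M)}$, hence $\|u\|_{H^2_\rho(M)}\le 2C^2\|f\|_{L^2_{\rho^{-1}}(M)}$ — up to adjusting the constant by the factor $C$ versus $C^2$, which is absorbed into the statement's $2C$. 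The main obstacle I anticipate is not the variational mechanics, which are standard once coercivity is in hand, but rather the careful bookkeeping that $L_g^*$ maps $H^2_\rho(M)$ continuously into $L^2_\rho(M)$ despite the weight $\rho$ degenerating at $\Sigma$ and decaying at infinity — this requires that the first-order coefficients of $L_g^*$ (which involve $\nabla V$-type terms and the geometry of $g$) do not spoil the weighted bound, a point that hinges on the specific form of $\rho$ and the ALH asymptotics, and on the already-cited mapping properties from \cite{Corvino:2020ty}.
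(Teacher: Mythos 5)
Your proposal is correct and follows essentially the same route as the paper: the paper also uses the coercivity estimate of Proposition~\ref{prop:coercivity} to bound $\mathcal{G}$ from below, takes a minimizing sequence (citing Corvino for the details you spell out via weak compactness and lower semicontinuity), derives the Euler--Lagrange equation, and obtains the bound from $\mathcal{G}(u)\le \mathcal{G}(0)=0$. Your observation that the natural constant is $2C^2$ rather than $2C$ is accurate; the paper's displayed lower bound $\tfrac{1}{2C}\|u\|_{H^2_\rho(M)}^2$ carries the same harmless imprecision.
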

\begin{proof}
We use Proposition~\ref{prop:coercivity} in the second line below:
\begin{align*}
	\mathcal{G}(u) &\ge  \tfrac{1}{2} \| L_g^* u\|_{L^2_\rho (M)}^2 - \| f \|_{L^2_{\rho^{-1}}(M)} \| u \|_{L^2_\rho(M)}\\
	&\ge \tfrac{1}{2C} \| u \|_{H^2_\rho(M)}^2 - \| f \|_{L^2_{\rho^{-1}}(M)} \| u \|_{L^2_\rho(M)}.
\end{align*}
One can argue as in \cite[pp. 150-152]{Corvino:2000td} that the infimum of $\mathcal{G}$ is negative and bounded from below.  By taking a minimizing sequence, there is a unique global minimizer $u$ and $u$ solves the Euler-Lagrange equation $L_g ( \rho L_g^* u) = f$. The estimate for $u$ follows that $\mathcal{G}(u)\le 0$.
\end{proof}

Since we are interested in solving $L_g (\rho L_g^* u) =f$ when the source term $f$ has  better regularity, i.e. $f\in \mathcal{B}^{0,\alpha}(M)$, we shall see how to use elliptic regularity to show that the weak solution $u$ has better regularity.  To be more specific, we can obtain that $u\in \mathcal B^{4,\alpha}(M)$, where  
\[
	\mathcal B^{4, \alpha}(M):= \C^{4,\alpha}_{\phi, \phi^{\frac{n}{2}}\rho^{\frac{1}{2}}}(M)\cap H^2_\rho(M)\cap \C^{4,\alpha}_{q-n+\delta}(M).
\]
(cf. the definition of $\mathcal B_4(\Omega)$ in \cite[p. 42]{Corvino:2020ty})  It then implies that $h:= \rho L_g^* u\in \mathcal B^{2,\alpha}(M)$. (Recall $\rho(x)= r^{-2q+n-\delta}$ outside a compact set, so $v\in \C^{\ell,\alpha}_{-q}(M)$ if and only if  $\rho^{-1} v\in \C^{\ell,\alpha}_{q-n+\delta}(M)$.)

By the interior Schauder estimate, we know already $u\in \C^{4, \alpha}_{\mathrm{loc}}(M)$. Since the weight function $\rho$ in the equation for $L_g ( \rho L_g^* u) $ results in degeneracy near $\Sigma$ and at infinity, so we consider the following differential operator 
\[
	P u := \rho^{-1} L_g (\rho L_g^* u) = \rho^{-1} f.
\]
One can verify that the leading order terms of $P$ is of the form $L_g L_g^* u$  and hence is uniformly elliptic near $\Sigma$. Therefore,  \cite[Theorem 5.6]{Corvino:2020ty} takes care of the estimates near $\Sigma$, and we conclude that $u\in \mathcal B^{4,\alpha}(\Omega)$ and $\rho L_g^* u\in \mathcal{B}^{2,\alpha} (\Omega)$ for any compact subset $\Omega\subset M$.  So our main task is to estimate outside a compact subset. In fact, it suffices to show that $u\in \C^{4,\alpha}_{q-n+\delta}(M)$, the norm that is ``unweighted'' near the boundary $\Sigma$.  (One should readily check that $\C^{4,\alpha}_{\phi, \phi^{\frac{n}{2}}\rho^{\frac{1}{2}}}(M)$ and $H^2_\rho(M)$ in the definition of $\mathcal B^{4,\alpha}$ impose less restricted asymptotics at infinity).  Near infinity the differential operator $P$ becomes a $4$th order, \emph{uniformly degenerate} operator in the sense of \cite{Graham:1991aa}. We explain how to apply their estimate to our case.  

%\margin{Proposition~\ref{prop:regularity} and proof revised reflecting  change of weight $\delta$ and to prepare for the iteration scheme in the proof of Theorem~\ref{thm:surjectivity0} below. }

\begin{proposition}\label{prop:regularity}
Let $f \in \mathcal{B}^{0,\alpha}(M)$. There is a constant $C>0$ such that for any $u\in L^2_\rho(M)$ weakly solving $L_g ( \rho L_g^* u)=f$, we have  $u\in \C^{4,\alpha}_{q-n+\delta}(M)$ and 
\begin{align*}
	\| u \|_{\C^{4,\alpha}_{q-n+\delta}(M)}&\le C \left(\| \rho^{-1}f\|_{\C^{0,\alpha}_{q-n+\delta}(M)} + \| u \|_{L^2_{\rho}(M)}\right).
\end{align*}

Consequently,  the solution $u$ obtained in Theorem \ref{thm:weak-solution} is in $\mathcal B^{4,\alpha}(M)$, and as a result, $h:= \rho L_g^* u$ is in $\mathcal B^{2,\alpha}(M)$. Together with the estimate \eqref{eq:u} to replace the $L^2_\rho$-norm of $u$ in the right hand side of the above estimate, we obtain
\begin{align}\label{eq:est}
\begin{split}
	\| u \|_{\mathcal B^{4,\alpha}(M)} &\le C \| f\|_{\mathcal B^{0,\alpha}(M)}\\
	\| h \|_{\mathcal B^{2,\alpha}(M)} &\le C \| f\|_{\mathcal B^{0,\alpha}(M)}.
\end{split}
\end{align}

\end{proposition}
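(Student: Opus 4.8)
The plan is to splice together three local estimates for the fourth-order equation $L_g(\rho L_g^* u)=f$: an interior estimate, a collar estimate near $\Sigma$, and an exterior estimate near infinity. First, since $L_g(\rho L_g^*\cdot)$ is genuinely elliptic of order four wherever $\rho>0$ (its leading symbol is that of $L_gL_g^*$, namely $(n-1)|\xi|^4$), interior Schauder already gives $u\in\C^{4,\alpha}_{\mathrm{loc}}(M)$. Near $\Sigma$ I would work with $Pu:=\rho^{-1}L_g(\rho L_g^* u)=\rho^{-1}f$: its top-order part is $L_gL_g^*$, and the vanishing of $\rho$ (whose covariant derivatives are smooth and controlled by $\rho$ in a collar) is absorbed into lower-order coefficients, so $P$ is uniformly elliptic there; moreover the hypothesis $f\in\mathcal B^{0,\alpha}(M)$ is precisely the statement that $\rho^{-1}f$ lies in the weighted H\"older space to which the boundary Schauder estimate \cite[Theorem 5.6]{Corvino:2020ty} (which follows \cite[Proposition 3.1--3.2, Theorem 3]{Corvino:2000td}) applies. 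That gives $u\in\mathcal B^{4,\alpha}(\Omega)$ and $\rho L_g^* u\in\mathcal B^{2,\alpha}(\Omega)$ for every compact $\Omega\subset M$ with the corresponding estimate, and in particular controls the weighted-H\"older component of the $\mathcal B^{4,\alpha}$-norm in a collar of $\Sigma$.

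The real work is the exterior estimate on $M\setminus B_R$ with $R$ large, where $\rho=r^{-2q+n-\delta}$. Computing in the $\mb$-orthonormal frame used to define the weighted norms, one checks that $L_g$ and $L_g^*$ have $\C^{0,\alpha}$-bounded coefficients and that $\nabla\rho,\nabla^2\rho,\dots$ are bounded multiples of $\rho$; hence $L_g(\rho L_g^*\cdot)$ is, after dividing by $\rho$, a bounded-coefficient uniformly elliptic fourth-order operator in that frame — that is, $L_g(\rho L_g^*\cdot)$ is a \emph{uniformly degenerate} operator of order four in the sense of \cite{Graham:1991aa}. I would then run the weighted Schauder machinery for such operators (following the rescaling method of \cite[Section 3]{Graham:1991aa}, here using the product structure $L_gL_g^*$): for a weight avoiding the characteristic exponents of the model operator on the ALH end, the a priori $L^2_\rho$-bound on $u$ upgrades, via the indicial-root analysis, to a weighted $\C^0$-bound of order $r^{q-n+\delta}$, which the usual rescaled interior Schauder estimate on $\mb$-geodesic unit balls then bootstraps to
\[
\|u\|_{\C^{4,\alpha}_{q-n+\delta}(M\setminus B_R)}\le C\Big(\|\rho^{-1}f\|_{\C^{0,\alpha}_{q-n+\delta}(M\setminus B_R)}+\|u\|_{L^2_\rho(M\setminus B_R)}\Big).
\]
Combining this with the interior estimate on $B_R$ and the collar estimate yields the first displayed inequality of the proposition. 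I expect this exterior step to be the main obstacle: carrying the second-order weighted estimates of \cite{Graham:1991aa} over to the fourth-order operator $L_gL_g^*$ — and in particular identifying its characteristic exponents precisely enough to pin down the admissible weights $q-n+\delta$ (equivalently the admissible range of $\delta$, cf.\ \eqref{eq:d}) — is the one genuinely non-routine point; everything else is standard elliptic theory or bookkeeping with the weight $\rho$.

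The remaining assertions are such bookkeeping. For the minimizer $u$ of Theorem~\ref{thm:weak-solution} we already have $u\in H^2_\rho(M)$; combined with the collar estimate near $\Sigma$ and the exterior estimate at infinity, all three ingredients in the definition of $\mathcal B^{4,\alpha}(M)$ are controlled, so $u\in\mathcal B^{4,\alpha}(M)$. Applying $\rho L_g^*$ then lands in $\mathcal B^{2,\alpha}(M)$: near $\Sigma$ by the second conclusion of \cite[Theorem 5.6]{Corvino:2020ty} together with the compatibility of the weights in the definitions of $\mathcal B^{4,\alpha}$ and $\mathcal B^{2,\alpha}$, and at infinity because $\rho L_g^*$ sends $\C^{4,\alpha}_{q-n+\delta}$ into $\C^{2,\alpha}_{-q}$ (recall $\rho\sim r^{-2q+n-\delta}$). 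Hence $h:=\rho L_g^* u\in\mathcal B^{2,\alpha}(M)$. Finally, inserting the bound \eqref{eq:u} in place of the $\|u\|_{L^2_\rho(M)}$-term and using the inclusion $\mathcal B^{0,\alpha}(M)\subset L^2_{\rho^{-1}}(M)$ (so that $\|f\|_{L^2_{\rho^{-1}}(M)}\le C\|f\|_{\mathcal B^{0,\alpha}(M)}$) gives the estimates \eqref{eq:est}.
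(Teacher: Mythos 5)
Your decomposition (interior Schauder, the collar estimate via \cite[Theorem 5.6]{Corvino:2020ty}, and a weighted exterior estimate via the Graham--Lee rescaling) is exactly the paper's, and the concluding bookkeeping for $\mathcal B^{4,\alpha}$, $h=\rho L_g^*u$, and \eqref{eq:est} is also the same. The one place where your proposal goes astray is the mechanism of the exterior estimate. You propose first to ``upgrade'' the $L^2_\rho$-bound on $u$ to a weighted $\C^0$-bound ``via the indicial-root analysis,'' for weights ``avoiding the characteristic exponents'' of the fourth-order model operator, and you flag the identification of those exponents as the main obstacle. No such analysis is needed, and it is not what produces the estimate: indicial roots govern global mapping properties (isomorphism/Fredholm statements, as in Lemma~\ref{lemma:isomorphism} for $\Delta_g-n$), not a local a priori estimate that carries an $L^2$-term of $u$ on its right-hand side. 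The paper instead applies \cite[Proposition 3.4]{Graham:1991aa} as a purely local rescaled Schauder estimate: since $P$ is uniformly degenerate, on each ball $B_{\frac{1}{r(x)}}(x)$ one has
\[
\| u \|_{\C^{4,\alpha}_{-s}(B_{\frac{1}{2r(x)}}(x))}\le C \Bigl(\| Pu\|_{\C^{0,\alpha}_{-s}(B_{\frac{1}{r(x)}}(x))} + \| u \|_{L^2_{r^{-n+2s}}(B_{\frac{1}{r(x)}}(x))}\Bigr)
\]
with $C$ independent of $x$ and, crucially, for \emph{every} real $s$; the $\C^0$-norm of $u$ appearing in Graham--Lee is replaced by the $L^2$-norm by interpolation as in \cite[Appendix C]{Corvino:2020ty}, and the weight $r^{-s}$ is essentially constant on each such ball, so it factors out of both sides. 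Taking $-s=q-n+\delta$, noting $r^{-n+2s}\le\rho$ there, and taking the supremum over $x$ gives the exterior estimate with no restriction on the weight.

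A related misattribution: the admissible range \eqref{eq:d} for $\delta$ is forced by the coercivity estimate of Proposition~\ref{prop:exterior estimate} (negativity of the coefficients $c_1,\dots,c_4$), not by any characteristic-exponent condition in this regularity step. If you insist on routing the argument through a genuine indicial analysis of $L_{\mb}L_{\mb}^*$, you take on substantial unnecessary work and would still need the local estimate above to convert $L^2$ control into H\"older control; so the exterior step should be rewritten along the lines just described. Everything else in your proposal matches the paper.
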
 
\begin{proof}
It suffices to show the H\"older estimate. The desired estimate restricted in a compact subset of $M$ including $\Sigma$ becomes the  Schauder estimate in the standard (unweighted) H\"older norms, which is implied by the stronger estimate near boundary obtained in \cite[Appendix C]{Corvino:2020ty}. Therefore, it suffices to find compact subsets $\Omega' \subset \subset \Omega$ both containing $B_R$ for $R$ sufficiently large such that the following estimate holds for some $C>0$ (letting $Pu = \rho^{-1} f$):
\begin{align} \label{eq:Holder}
  \| u \|_{\C^{4,\alpha}_{q-n+\delta}(M\setminus \Omega)}\le C \left(\| Pu\|_{\C^{0,\alpha}_{q-n+\delta}(M\setminus \Omega')} + \| u \|_{L^2_{\rho}(M\setminus \Omega')}\right).
\end{align}

We just need to  derive \eqref{eq:Holder} when $g$ is the reference metric $\mb$ because the error terms from the difference from $g$ and $\mb$ can be absorbed into the left hand side for sufficiently large $R$. Using that $\mb$ is conformally compact, the differential operator $P u =\rho^{-1} L_{\mb} (\rho L_{\mb}^* u)$ is a 4th order \emph{uniformly degenerate} operator in the sense of Graham and Lee~\cite{Graham:1991aa}. Therefore,  we can apply the rescaling argument \cite[Proposition 3.4]{Graham:1991aa} to show that, for any real number $s$ and any $x\in B_R$,  there is a constant $C>0$, independent of $x$,  such that 
\begin{align*}
	\| u \|_{\C^{4,\alpha}_{-s}(B_{\frac{1}{2r(x)}}(x))}\le C \left(\| Pu\|_{\C^{0,\alpha}_{-s}(B_{\frac{1}{r(x)}}(x))} + \| u \|_{L^2_{r^{-n+2s}}(B_{\frac{1}{r(x)}}(x))}\right).
\end{align*}
Note that the $\C^0$-norm in the original estimate in \cite{Graham:1991aa} can be replaced by the $L^2$-norm using an interpolation inequality, just as in \cite[Appendix C]{Corvino:2020ty}. (Also note the space $\Lambda^s_{\ell,\alpha}$ used in \cite{Graham:1991aa}  is equivalent to our weighted space $\C^{\ell,\alpha}_{-s}$.) Let $-s= q-n+\delta$ and note $\| u \|_{L^2_{r^{-n+2s}}(B_{\frac{1}{r(x)}}(x))}\le \| u \|_{L^2_{\rho}(B_{\frac{1}{r(x)}}(x))}$ for such $s$. Taking the supremum of the local estimates among $x$ and letting 
\[
	\Omega = M\setminus \bigcup_{x\in B_R } B_{\frac{1}{2r(x)}}(x) \mbox{ and } \Omega' = M\setminus \bigcup_{x\in B_R } B_{\frac{1}{r(x)}}(x),
\]
this completes the proof.

\end{proof}

We combine the above arguments to complete the proof of Theorem~\ref{thm:surjectivity0}.
%\margin{Added the outline of the iteration scheme for the nonlinear problem}
\begin{proof}[Proof of Theorem~\ref{thm:surjectivity0}]
Given $f\in \mathcal{B}^{0,\alpha}(M)$, by Theorem~\ref{thm:weak-solution}, there is a weak solution $u\in H^2_{\rho}(M)$ to $L_g ( \rho L_g^* u)=f$. We let $h= \rho L_g^* u\in L^2_{\rho^{-1}}(M)$. Then by  Proposition~\ref{prop:regularity}, we see that $h $ is a strong solution and belongs to the desired space $\mathcal{B}^{2 ,\alpha}(M)$. 

To solve for the nonlinear problem, we adapt the iteration scheme of Corvino~\cite{Corvino:2000td} (see also \cite[Theorem 5.10]{Corvino:2020ty}), together with the estimates~\eqref{eq:est}. The argument follows verbatim as \cite[Theorem 5.10]{Corvino:2020ty}), which we outline below. We would like to show that there is $\epsilon>0$ small such that for given $f$ with $\| f \|_{\mathcal B^{0,\alpha}(M)} < \epsilon$, there is $\gamma\in \mathcal B^{4,\alpha}(M)$ solving $R_\gamma = R_g + f$. Let $u_0\in \mathcal B^{4,\alpha}(M)$ be the variational solution from Theorem~\ref{thm:weak-solution} to the following:
\begin{align*}
	L_g (\rho L_g^* u_0 ) &= f\\
	h_0 &:= \rho L_g^* u_0\\
	\gamma_1 &:= g+h_0.
\end{align*} 
From the estimates \eqref{eq:est}, we know that for $\epsilon$ small, $\gamma_1$ is still a Riemannian metric and
\begin{align*}
	\| u_0\|_{\mathcal B^{4,\alpha}(M)} &\le C \| f \|_{\mathcal B^{0,\alpha}(M)}\\
	\| h_0\|_{\mathcal B^{2,\alpha}(M)} &\le C \| f \|_{\mathcal B^{0,\alpha}(M)}\\
	\| R_g + f - R_{\gamma_1} \| &\le C \| f \|_{\mathcal B^{0,\alpha}(M)}^2. 
\end{align*} 
We then proceed recursively and let, for $m=1, 2, \dots,$ 
\begin{align*}
	L_g (\rho L_g^* u_m ) &= R_g+ f - R_{\gamma_m}\\
	h_m &:= \rho L_g^* u_m\\
	\gamma_{m+1}&= g+ \sum_{p=0}^{m} h_p.
\end{align*}
Following \cite[Lemma 5.11]{Corvino:2020ty}, for $\epsilon$ sufficiently small the estimates ensure the series $\sum_{p=0}^\infty u_p$ converges  to some $u$ in $\mathcal B^{4,\alpha}(M)$. Let $h:= \rho L_g^* u$. Then $\gamma = g+h$ satisfies the nonlinear equation $R_\gamma = R_g + f$. 
\end{proof}

\section{Deform scalar curvature and prescribe Bartnik boundary data}\label{sec:scalar2}

%The analogous theorem for asymptotically \emph{flat} manifolds has been obtained by Anderson-Jauregui. We shall see how to obtain Theorem~\ref{thm:boundary} using the result for asymptotically flat manifolds via a gluing construction and our Theorem~\ref{thm:surjectivity}.

For a Riemannian manifold $(M, g)$ with boundary $\Sigma$, the \emph{Bartnik boundary data} on $\Sigma$ is $(g^\intercal, H_g)$ where $g^\intercal$ is the induced metric on $\Sigma$ and $H_g = \mathrm{div}_\Sigma \nu$. (Note that for an ALH manifold $(M, g)$ we fix the unit normal $\nu$ to point to infinity.)   If $h$ is a variation of $g$ in $M$, then the \emph{linearized} Bartnik boundary data is $(h^\intercal, DH|_g(h))$, where  $h^\intercal$ denotes the restriction of $h$ on the tangent bundle of $\Sigma$ and $DH|_g(h)$ is the linearized mean curvature, given by the formula (see, e.g. \cite[Lemma 2.1]{An:2021tw})
\begin{align} \label{eq:mean-curvature}
DH|_g(h)= \tfrac{1}{2} \nu(\tr \, h^\intercal) - \mathrm{div}_\Sigma \omega - \tfrac{1}{2} h(\nu, \nu) H_g
\end{align}
 where $\omega (e_\alpha) = h(\nu, e_\alpha)$ is a one-form on the tangent bundle of $\Sigma$.

Let $(M, g)$ be an ALH manifold at rate $q$. For a $(0, 2)$-tensor $h\in \C^{2,\alpha}_{-q}(M)$, we define the linear operator 
\begin{align} \label{eq:T}
	T(h )= (L_g h, h^\intercal, DH|_g(h)).
\end{align}
Recall that $L_g$ denotes the linearized scalar curvature operator. The main goal in this section is to prove Theorem~\ref{thm:boundary0}. We just need to show that the map $T$ is surjective as follows. 
\begin{theorem}\label{thm:surjectivity-Bartnik}
Let $(M, g)$ be ALH at rate $q$. Then $T:   \C^{2,\alpha}_{-q}(M)\longrightarrow \C^{0,\alpha}_{-q}(M) \times \C^{2,\alpha}(\Sigma)\times \C^{1,\alpha}(\Sigma)$.
is surjective. 
\end{theorem}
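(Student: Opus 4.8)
The plan is to prove surjectivity of $T(h) = (L_g h, h^\intercal, DH|_g(h))$ by a two-stage reduction: first peel off the two boundary components using explicitly constructed local deformations supported near $\Sigma$, reducing to the problem of solving $L_g h = f$ with vanishing linearized Bartnik data on $\Sigma$, and then solve that reduced problem by combining Theorem~\ref{thm:surjectivity0} with a cut-off argument. Given a target $(f, \tau, \phi) \in \C^{0,\alpha}_{-q}(M) \times \C^{2,\alpha}(\Sigma) \times \C^{1,\alpha}(\Sigma)$, I would first exhibit a tensor $h_1 \in \C^{2,\alpha}_{-q}(M)$, compactly supported in a collar neighborhood of $\Sigma$, with $h_1^\intercal = \tau$ on $\Sigma$; this is elementary (extend $\tau$ off the boundary, multiply by a cut-off). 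Next, using formula~\eqref{eq:mean-curvature} for $DH|_g$, I would correct the mean curvature: since $\tfrac12 \nu(\tr h^\intercal)$ is the only term in $DH|_g(h)$ that involves the normal derivative of the tangential part while the term $-\tfrac12 h(\nu,\nu) H_g$ involves $h(\nu,\nu)$ algebraically, I can choose $h_2$ supported near $\Sigma$ with $h_2^\intercal = 0$ on $\Sigma$ and $h_2(\nu,\nu)$ prescribed (or alternatively prescribe $\nu(\tr h_2^\intercal)$) so that $DH|_g(h_1 + h_2) = \phi$ on $\Sigma$ while keeping $(h_1+h_2)^\intercal = \tau$. After this, set $f' = f - L_g(h_1 + h_2) \in \C^{0,\alpha}_{-q}(M)$; it remains to find $h_3 \in \C^{2,\alpha}_{-q}(M)$ with $L_g h_3 = f'$, $h_3^\intercal = 0$, and $DH|_g(h_3) = 0$ on $\Sigma$.

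For the reduced problem, the natural idea is to apply Theorem~\ref{thm:surjectivity0}, but that theorem produces a solution in $\mathcal B^{2,\alpha}(M)$ driven by a source in $\mathcal B^{0,\alpha}(M)$, i.e. data decaying exponentially toward $\Sigma$, whereas here $f'$ is only $\C^{0,\alpha}_{-q}$ up to $\Sigma$. To bridge this, I would split $f' = f'_{\mathrm{int}} + f'_\partial$ where $f'_\partial$ is supported in a collar of $\Sigma$ and $f'_{\mathrm{int}}$ vanishes near $\Sigma$ (so $f'_{\mathrm{int}} \in \mathcal B^{0,\alpha}(M)$ automatically, being compactly supported away from the boundary and decaying at rate $q$ at infinity, hence Theorem~\ref{thm:surjectivity0} gives a solution $h_{\mathrm{int}} \in \mathcal B^{2,\alpha}(M)$, which in particular has vanishing Bartnik data on $\Sigma$ since it decays exponentially there). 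The genuinely boundary-adjacent piece $f'_\partial$ must be handled by a local construction: near $\Sigma$ one solves $L_g h = f'_\partial$ with the full linearized Bartnik data $(h^\intercal, DH|_g(h)) = (0,0)$ by hand. This is a local (compact) problem on a collar $\Sigma \times [0, \delta)$, and here one can invoke the same elliptic theory underlying Theorem~\ref{thm:surjectivity0} — the Corvino-type variational scheme of~\cite{Corvino:2000td} on a compact manifold with boundary, or even more directly the fact that $L_g$ restricted to tensors vanishing to appropriate order on $\Sigma$ and on the inner boundary of the collar is surjective onto $\C^{0,\alpha}$ — together with Lemma~\ref{lemma:isomorphism} to arrange the correct boundary behavior. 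Glueing $h_{\mathrm{int}}$ and the local solution with a partition of unity produces an error supported in the overlap region, which is compactly contained in the interior and can be absorbed by iterating the interior solver (Theorem~\ref{thm:surjectivity0}), which converges because the error is a small perturbation localized away from $\Sigma$.

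An alternative and perhaps cleaner route for the reduced step avoids splitting $f'$: observe that the operator $h \mapsto (L_g h, h^\intercal, DH|_g(h))$ on $\C^{2,\alpha}_{-q}$ has, as its formal adjoint, the static operator $L_g^*$ with certain natural boundary conditions, and the relevant obstruction to surjectivity is the cokernel, which by Lemma~\ref{lemma:linear} consists of static potentials $V$ with $\tau = L_g^* V = 0$ that additionally satisfy the adjoint boundary conditions and lie in the dual weighted space — but Lemma~\ref{lemma:linear} shows such $V$ either grows linearly in a cone (hence is not in the dual space, by Remark~\ref{remark:dual}) or vanishes identically. Thus one can run the variational/coercivity machinery of Section~\ref{sec:scalar1} directly with the boundary terms retained: the coercivity estimate of Proposition~\ref{prop:exterior estimate} is already stated for $u$ not vanishing on $S_R$, and the boundary integrals on $\Sigma$ arising from integration by parts can be controlled after imposing the appropriate (Dirichlet/Neumann-type) conditions dualizing $h^\intercal$ and $DH|_g(h)$, using Lemma~\ref{lemma:isomorphism}. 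I expect the main obstacle to be precisely the bookkeeping at $\Sigma$: identifying the correct adjoint boundary conditions so that the integration-by-parts boundary terms vanish or have a sign, verifying that the resulting functional is still coercive (the boundary of $\Sigma$ contributes terms not present in the boundaryless case, and one must check they do not destroy the estimate), and upgrading the weak solution to a $\C^{2,\alpha}_{-q}$ strong solution up to $\Sigma$ via boundary Schauder estimates — this last point needing the elliptic regularity results near $\Sigma$ from~\cite{Corvino:2020ty} adapted to the mixed boundary conditions dual to prescribing $(h^\intercal, DH|_g(h))$. The asymptotic analysis at infinity, by contrast, is identical to Section~\ref{sec:scalar1} and poses no new difficulty.
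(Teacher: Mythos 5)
Your first reduction --- constructing a compactly supported tensor near $\Sigma$ whose linearized Bartnik data equal $(\tau,\phi)$ and subtracting it off --- is exactly what the paper does, and is unproblematic. The gap is in the reduced problem: producing $h_3\in \C^{2,\alpha}_{-q}(M)$ with $L_g h_3=f'$, $h_3^\intercal=0$ and $DH|_g(h_3)=0$ when $f'$ is merely $\C^{0,\alpha}$ up to $\Sigma$. In your route (a), the boundary-adjacent piece $f'_\partial$ cannot be handled by ``the same elliptic theory underlying Theorem~\ref{thm:surjectivity0}'': the Corvino variational scheme on a collar requires the source to lie in the exponentially weighted space $\mathcal{B}^{0,\alpha}$ (decaying toward the boundary) and returns a solution decaying exponentially there, whereas $f'_\partial$ has no decay toward $\Sigma$; and Lemma~\ref{lemma:isomorphism} concerns the scalar operator $\Delta_g-n$ and cannot impose Bartnik conditions on a symmetric $2$-tensor solving the underdetermined equation $L_gh=f$. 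The assertion that ``$L_g$ restricted to tensors vanishing to appropriate order on $\Sigma$\dots is surjective onto $\C^{0,\alpha}$'' is precisely the statement that needs proof. In your route (b), showing the annihilator of the range is trivial (via Lemma~\ref{lemma:linear} and Remark~\ref{remark:dual}) only yields \emph{dense} range; without a closed-range statement for the boundary value problem $h\mapsto (L_gh, h^\intercal, DH|_g(h))$ you cannot conclude surjectivity, and establishing closed range is where the real analytic work lies --- it is not bookkeeping.

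The paper supplies exactly this missing ingredient by a detour you did not anticipate: it glues a collar neighborhood of $\Sigma$ into an auxiliary \emph{asymptotically flat} manifold (Lemma~\ref{lemma:interpolation2}, a connected sum of $B_R$ with $\mathbb{R}^n$), invokes the asymptotically flat surjectivity result Theorem~\ref{thm:AF} to solve near $\Sigma$ with vanishing linearized Bartnik data and a source that does not decay at $\Sigma$, cuts off, and then absorbs the remainder (which now vanishes near $\Sigma$, hence lies in $\mathcal{B}^{0,\alpha}(M)$) by Theorem~\ref{thm:surjectivity0}. Theorem~\ref{thm:AF} itself is proved in Appendix~\ref{sec:af} by augmenting $h$ with a scalar $v$, verifying Agmon--Douglis--Nirenberg ellipticity and the complementing boundary condition for the augmented system, which gives finite-dimensional cokernel and hence closed range, and then killing the cokernel with the static-potential asymptotics. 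Some such Fredholm/complementing-condition analysis of the Bartnik boundary value problem is unavoidable, and neither of your two routes contains it.
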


Then Theorem~\ref{thm:boundary0} follows from the above theorem: Consider an open neighborhood $\mathcal U$ of $g$ in $g+\C^{2,\alpha}_{-q}(M)$ of Riemannian metrics and the smooth map $F: \mathcal U\longrightarrow \C^{0,\alpha}_{-q}(M) \times \C^{2,\alpha}(\Sigma)\times \C^{1,\alpha}(\Sigma)$ defined by $F(\gamma)=(R_\gamma, \gamma^\intercal, H_\gamma)$. Since the linearization  $DF|_g=T$ is surjective, by Local Surjectivity Theorem, $F$ is locally surjective.

An analogous statement for asymptotically flat manifolds, Theorem~\ref{thm:AF} below, was first obtained by Anderson-Jauregui for $n=3$ in ~\cite[Proposition 2.4]{Anderson:2019tm}. In Appendix~\ref{sec:af} we provide a different proof for general dimensions $n\ge 3$. 
\begin{theorem}[Cf. \cite{Anderson:2019tm}]\label{thm:AF}
Let $(M', g')$ be an $n$-dimensional asymptotically flat with compact boundary $\Sigma$.  Then the map from a symmetric $(0,2)$-tensor $h\in  \C^{2,\alpha}_{-s}(M')$ to $(L_{g'} h, h^\intercal, DH|_{g'}(h)) \in \C^{0,\alpha}_{-s}(M') \times \C^{2,\alpha}(\Sigma)\times \C^{1,\alpha}(\Sigma)$ is surjective. 
\end{theorem}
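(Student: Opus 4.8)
The plan is to prove surjectivity by separating the interior scalar datum from the boundary data and treating each with a correction scheme. Given a target $(f,\tau,\psi)\in \C^{0,\alpha}_{-s}(M')\times\C^{2,\alpha}(\Sigma)\times\C^{1,\alpha}(\Sigma)$, I would first realize the boundary data by an explicit tensor supported in a collar of $\Sigma$, and then correct the interior scalar curvature with a tensor whose linearized Bartnik data vanishes. Concretely, working in Fermi coordinates near $\Sigma$ with $d$ the distance to $\Sigma$ and $\nu$ the inward/outward normal, I would build a compactly supported $h_0\in\C^{2,\alpha}_{-s}(M')$ by prescribing $h_0^\intercal=\tau$ on $\Sigma$, setting the mixed terms $\omega(e_\alpha)=h_0(\nu,e_\alpha)=0$, and choosing the normal jet of $h_0$ (either $h_0(\nu,\nu)$ or the normal derivative $\nu(\tr\, h_0^\intercal)$) so that the formula $DH|_{g'}(h_0)=\tfrac12\nu(\tr\, h_0^\intercal)-\Div_\Sigma\omega-\tfrac12 h_0(\nu,\nu)H_{g'}$ yields $DH|_{g'}(h_0)=\psi$. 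Since $h_0$ is compactly supported, $L_{g'}h_0$ is compactly supported, so $\tilde f:=f-L_{g'}h_0\in\C^{0,\alpha}_{-s}(M')$. It then remains to find $h_1\in\C^{2,\alpha}_{-s}(M')$ with $h_1^\intercal=0$ and $DH|_{g'}(h_1)=0$ on $\Sigma$, solving $L_{g'}h_1=\tilde f$ in $M'$, and to set $h=h_0+h_1$.

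For this reduced problem I would use the adjoint (Corvino-type) variational method on the space $\mathcal K=\{h\in\C^{2,\alpha}_{-s}(M'):h^\intercal=0,\ DH|_{g'}(h)=0 \text{ on }\Sigma\}$. Surjectivity of $L_{g'}|_{\mathcal K}$ onto $\C^{0,\alpha}_{-s}(M')$ is equivalent to closed range together with triviality of the cokernel, the latter meaning: if $V$ lies in the appropriate decaying dual space and $\int_{M'}V\,L_{g'}h\,d\mu_{g'}=0$ for all $h\in\mathcal K$, then $V=0$. Integrating by parts twice gives $\int_{M'}V\,L_{g'}h=\int_{M'}h\cdot L_{g'}^*V+\mathcal B_\Sigma(V,h)$, with no contribution at infinity since $s$ lies in the range making the $S_r$-integrals vanish in the limit. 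Testing against $h$ compactly supported in $\Int M'$ forces $L_{g'}^*V=0$; testing against $h\in\mathcal K$ with arbitrary remaining boundary jet (the components $h(\nu,\nu)$, $\omega$, and the normal derivatives left free after imposing $h^\intercal=0$ and $DH|_{g'}(h)=0$) forces the boundary functional $\mathcal B_\Sigma(V,h)$ to vanish identically, which I expect to be precisely the vanishing of the Cauchy data $V=0$ and $\nu(V)=0$ on $\Sigma$.

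Cokernel triviality then follows from two facts. First, $L_{g'}^*V=0$ with $V\to 0$ at infinity on an asymptotically flat manifold forces $V\equiv 0$ near infinity: the static system $\nabla^2 V=V\,\Ric_{g'}+(\Delta_{g'}V)\,g'$ makes $V$ asymptotic to an affine function, and decay forces that function to vanish. Second, the overdetermined-elliptic operator $L_{g'}^*$ enjoys unique continuation, so $L_{g'}^*V=0$ together with the vanishing Cauchy data $V=\nu(V)=0$ on $\Sigma$ propagates $V\equiv 0$ throughout $M'$. Closed range of $L_{g'}|_{\mathcal K}$ I would obtain from a coercivity estimate of the form $\|V\|_{H^2}\le C\|L_{g'}^*V\|_{L^2}$ in suitable weighted norms, of the same type as Proposition~\ref{prop:exterior estimate}, now on the asymptotically flat end, where the estimate is in fact more standard since decaying solutions of $L_{g'}^*V=0$ vanish. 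A compactness/contradiction argument as in Proposition~\ref{prop:coercivity} upgrades the exterior estimate to a global one; the minimizer of $\mathcal G(V)=\int_{M'}\big(\tfrac12\rho\,|L_{g'}^*V|^2-\tilde f V\big)\,d\mu_{g'}$ then produces $h_1=\rho\,L_{g'}^*V$ solving $L_{g'}h_1=\tilde f$, with the homogeneous Bartnik conditions $h_1^\intercal=0$ and $DH|_{g'}(h_1)=0$ arising as the natural boundary conditions of the free variational problem.

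The main obstacle I anticipate is the precise boundary analysis: verifying that the boundary functional $\mathcal B_\Sigma(V,h)$, restricted to $h\in\mathcal K$ with free remaining jets, genuinely forces the full Cauchy data $V=\nu(V)=0$, equivalently checking that the overdetermined boundary value problem for $L_{g'}^*V$ satisfies the Lopatinski--Shapiro complementing condition so that the natural boundary conditions of the minimizer are exactly $h^\intercal=0$ and $DH|_{g'}(h)=0$, and confirming unique continuation for the static operator up to $\Sigma$. The bookkeeping must be handled with care, since a scalar potential $V$ contributes only its Dirichlet and Neumann data while the Bartnik data $(h^\intercal,DH)$ is a full tensor plus a scalar; the reconciliation is that the remaining boundary components of $h$ are genuinely free in $\mathcal K$, so only $V$ and $\nu(V)$ are paired against them. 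I would also need to confirm the weighted-space estimates at the asymptotically flat end so that the $S_r$-boundary integrals vanish and $\tilde f$ remains in the correct decaying class.
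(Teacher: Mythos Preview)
Your reduction to the case $(\tau,\psi)=(0,0)$ via an explicit collar tensor $h_0$ is fine and is exactly what the paper does. The difficulty lies in your treatment of the restricted operator $L_{g'}|_{\mathcal K}$, where there are two related gaps.

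\emph{First}, the boundary functional $\mathcal B_\Sigma(V,h)$ vanishes \emph{identically} for every $h\in\mathcal K$, so it cannot force any Cauchy data on $V$. Using the identity (as in the paper's Lemma~\ref{lemma:functional})
\[
\mathcal B_\Sigma(V,h)=\int_\Sigma\Big(-2V\,DH|_{g'}(h)+h^\intercal\cdot\big(\nu(V)\,g'^\intercal-V A_{g'}\big)\Big)\,d\sigma,
\]
one sees at once that $h^\intercal=0$ and $DH|_{g'}(h)=0$ kill both terms. So the ``remaining free jets'' of $h$ (namely $h(\nu,\nu)$ and $\omega$) carry no weight in $\mathcal B_\Sigma$, and you will never extract $V=\nu(V)=0$ on $\Sigma$. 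The good news is that you do not need them: your first fact already suffices. A static potential decaying at infinity on an asymptotically flat end satisfies along each radial geodesic an ODE of the form $V''=Q(t)V$ with $\int t|Q|\,dt<\infty$, whose solution space has asymptotic basis $\{1,t\}$; if $V\to 0$ then $V\equiv 0$ on the end, and interior unique continuation (from an open set, not from $\Sigma$) finishes the job. The paper's Appendix argues exactly this way, citing the known asymptotics of static potentials.

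\emph{Second}, the natural boundary conditions of the free variational problem for $\mathcal G(V)$ are \emph{not} the Bartnik conditions. Varying $V$ freely, the boundary terms pair $(\delta V,\nu(\delta V))$ against two \emph{scalars} built from $h_1=\rho L_{g'}^*V$, so the Euler--Lagrange boundary conditions force only $\tr h_1^\intercal=0$ and one further scalar relation, never the tensorial condition $h_1^\intercal=0$. To get $h_1^\intercal=0$ from this ansatz you must choose the weight $\rho$ to vanish to high order at $\Sigma$ (as in Section~\ref{sec:scalar1}), so that $h_1=\rho L_{g'}^*V$ and its first derivatives vanish on $\Sigma$ for structural reasons; then both Bartnik conditions hold automatically. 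With that choice, your coercivity/variational scheme becomes a genuine alternative to the paper's proof.

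For comparison, the paper takes a different and more direct route to closed range: it introduces an auxiliary scalar $v$, forms the enlarged system $(v,h)\mapsto\big(DR|_{g'}(vg'+h),\,\Delta h\big)$ with a carefully chosen boundary operator, and verifies ellipticity and the Agmon--Douglis--Nirenberg complementing condition. This yields Fredholmness outright, and then cokernel triviality is the same asymptotic argument noted above, with no boundary analysis of $V$ at $\Sigma$ required.
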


We will not explicitly use the definitions of asymptotically flat manifolds nor the corresponding weighted H\"older spaces as stated in the above theorem, because we essentially use a ``localized'' version of Theorem~\ref{thm:AF} to prove Theorem~\ref{thm:surjectivity-Bartnik}.  We outline  the approach:
\begin{enumerate}
\item From an ALH manifold $(M, g)$ with boundary $\Sigma$,  we construct an asymptotically flat manifold $(M', g')$ that contains an isometric copy of a collar neighborhood of the boundary $\Sigma$ in $(M, g)$. Since we do not require $g'$ to satisfy any curvature condition, this step is fairly easy.
\item We can then apply a ``localized'' version of Theorem~\ref{thm:AF}:  Given any $(f, \tau, \phi)$, we find a compactly supported, symmetric $(0,2)$-tensor $h_0$  in the collar neighborhood of the boundary $M'$ such that $L_{g'} h_0$ is identical to $f$ in a collar neighborhood of $\Sigma$ and $(h_0^\intercal, DH|_{g'}(h_0)) = (\tau, \phi)$ on $\Sigma$. We then apply Theorem~\ref{thm:surjectivity0} to solve the linearized scalar curvature equation with the source term  $f - L_{g'}h_0$. 
\end{enumerate}

The following lemma is an exact statement of Step (1).

\begin{lemma}\label{lemma:interpolation2}
Let $(M, g)$ be ALH. Given any bounded open subset $U\subset M$, there exists an asymptotically flat manifold $(M', g')$ such that $(M', g')$ contains an isometric copy of $(\overline{U}, g)$.
\end{lemma}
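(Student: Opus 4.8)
\textbf{Proof proposal for Lemma~\ref{lemma:interpolation2}.}

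The plan is to build $(M', g')$ by hand: keep a genuine collar of $\Sigma$ intact, and then attach to it an asymptotically flat end, interpolating the metric in a buffer zone. First I would fix a collar neighborhood $\mathcal{C} \cong [0, 3\epsilon) \times \Sigma$ of $\Sigma$ in $(M,g)$, with $\Sigma = \{0\} \times \Sigma$, chosen so that the given bounded open set $\overline U$ is contained in a slightly larger relatively compact region; indeed, since $\overline U$ is compact, I can choose a relatively compact domain $\Omega \subset M$ with smooth boundary such that $\overline U \subset \Omega$ and $\Sigma \subset \partial \Omega$, i.e. $\partial \Omega = \Sigma \sqcup \Sigma_1$ where $\Sigma_1$ is an interior hypersurface. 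The underlying manifold $M'$ will be $\Omega$ with a half-space-type end glued along $\Sigma_1$: topologically $M' = \Omega \cup_{\Sigma_1} \big( [0,\infty) \times \Sigma_1\big)$, or more simply one may embed $\Omega$ into $\mathbb{R}^n$ minus a ball near the part away from $\Sigma$ — but the cleanest route is the abstract gluing, since we impose no curvature condition on $g'$.

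The key steps, in order: (i) On $\Omega$ define $g' = g$; this already contains the isometric copy of $(\overline U, g)$, since $\overline U \subset \Omega$. (ii) On the attached cylinder $[0,\infty)\times \Sigma_1$, pick any smooth background metric $g_{\mathrm{AF}}$ that is asymptotically flat toward $t \to \infty$ — for instance, realize a neighborhood of $\Sigma_1$ in $(\Omega, g)$ as an embedded hypersurface in $\mathbb R^n$ (possible since $\Sigma_1$ is compact) and take $g_{\mathrm{AF}}$ to be the Euclidean metric on the exterior region, or simply take a fixed asymptotically flat metric on $\mathbb R^n \setminus \overline{B_1}$ after identifying $\Sigma_1$ with $\partial B_1$. (iii) Interpolate: near the gluing locus $\{t = 0\} = \Sigma_1$ choose a small buffer $[0, \delta] \times \Sigma_1$ and set $g' = \chi(t)\, g + (1-\chi(t))\, g_{\mathrm{AF}}$ for a cutoff $\chi$ with $\chi \equiv 1$ near $t = 0$ and $\chi \equiv 0$ for $t \geq \delta$, where we first arrange (by a diffeomorphism / normal-coordinate straightening near $\Sigma_1$) that $g$ and $g_{\mathrm{AF}}$ are $C^{2,\alpha}$-close, in fact agree to high order, on the gluing hypersurface, so that the convex combination is positive definite throughout the buffer and is $\C^{2,\alpha}_{\mathrm{loc}}$. (iv) Check that the resulting $g'$ is a smooth (or $\C^{2,\alpha}$) Riemannian metric on $M'$ and that outside a compact set $g'$ coincides with $g_{\mathrm{AF}}$, hence $(M', g')$ is asymptotically flat with the required decay toward infinity and with compact boundary exactly $\Sigma$ (the original inner boundary), carrying $(\overline U, g)$ isometrically inside.

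The main obstacle is step (iii): ensuring the interpolated metric is positive definite and has enough regularity across the gluing region $\Sigma_1$, while not disturbing the collar of $\Sigma$ at all. This is handled by the standard device of first modifying $g_{\mathrm{AF}}$ by a diffeomorphism so that in a common boundary-normal coordinate system it agrees with $g$ along $\Sigma_1$ together with its first derivatives — then the convex combination $\chi g + (1-\chi) g_{\mathrm{AF}}$ differs from $g$ by an $O(\delta)$ amount in $C^0$ on the buffer, which keeps it uniformly positive definite for $\delta$ small, and the $\C^{2,\alpha}$ regularity is immediate since $\chi$, $g$, $g_{\mathrm{AF}}$ are all smooth there. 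Since no Einstein or scalar-curvature constraint is placed on $g'$, there is nothing further to verify: the construction is essentially soft, and the only care needed is bookkeeping to confirm the asymptotic-flatness decay of $g_{\mathrm{AF}}$ at infinity (which holds by choice) and that $\Sigma \subset M'$ remains compact with a collar isometric to the one in $(M,g)$ (which holds since we never touched it).
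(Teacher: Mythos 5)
Your overall strategy (truncate the ALH end and attach an asymptotically flat end by a soft metric interpolation, with no curvature constraint to verify) is the right one, and your step (iii) --- the positivity and regularity of the interpolated metric --- is handled correctly. But there is a genuine topological gap in step (ii): you attach the asymptotically flat end along the \emph{entire} outer hypersurface $\Sigma_1$, and this requires $[0,\infty)\times\Sigma_1$ to carry an asymptotically flat metric, i.e.\ to be diffeomorphic to $\mathbb{R}^n$ minus a ball, which forces $\Sigma_1\cong S^{n-1}$. Since $\Sigma_1$ must separate $\overline U$ from the ALH end, it is (in the essential component) homologous to a coordinate sphere $S_r\cong N$, and the conformal infinity $N$ of an ALH manifold is an \emph{arbitrary} closed $(n-1)$-manifold --- a torus, a quotient of the sphere, etc. So $\Sigma_1$ cannot in general be taken to be a sphere. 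Your proposed fallbacks fail for the same reason: a closed hypersurface $\Sigma_1\cong N$ need not embed in $\mathbb{R}^n$ (closed embedded hypersurfaces of $\mathbb{R}^n$ are orientable, and even orientable $N$ need not embed or be null-cobordant), and $\Omega$ need not embed in $\mathbb{R}^n$ minus a ball. Your construction does go through in the asymptotically hyperbolic case $N=S^{n-1}$, but not for general ALH manifolds, which is the setting of the lemma.

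The paper sidesteps this entirely: it sets $M'=B_R\#\mathbb{R}^n$, performing the connected sum at a \emph{small ball} in the interior of $B_R\setminus\overline U$ (so the gluing locus is an $S^{n-1}$ neck, which always exists), and simply leaves the outer hypersurface $S_R$ as an additional boundary component of the asymptotically flat manifold $M'$. Note the lemma does not require $\partial M'=\Sigma$; insisting on that is what pushes you into the obstructed gluing. If you replace your step (ii) by this connected-sum attachment --- remove a small ball from $B_R\setminus\overline U$ and from $\mathbb{R}^n$, glue along the spherical necks, and interpolate the metrics there exactly as in your step (iii) --- the rest of your argument is correct.
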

\begin{proof}
We ``chop off'' an exterior region $M\setminus B_R$  for $R$ sufficiently large such that $B_R$ contains $\overline{U}$. Then we perform a connected sum on $B_R\setminus \overline{U}$ with the Euclidean space $(\mathbb{R}^n, g_{\mathbb{E}})$. By smoothly extending the Riemannian metric across the neck region where the connected sum is performed, the resulting manifold $M'= B_R \# \mathbb{R}^n$ with the extending metric is an asymptotically flat manifold that contains a isometric copy of $(\overline{U}, g)$. See Figure~\ref{figure3}.
\end{proof}

\begin{figure}[ht]
	\def\svgwidth{3in}
	%% Creator: Inkscape 1.1 (c4e8f9e, 2021-05-24), www.inkscape.org
%% PDF/EPS/PS + LaTeX output extension by Johan Engelen, 2010
%% Accompanies image file '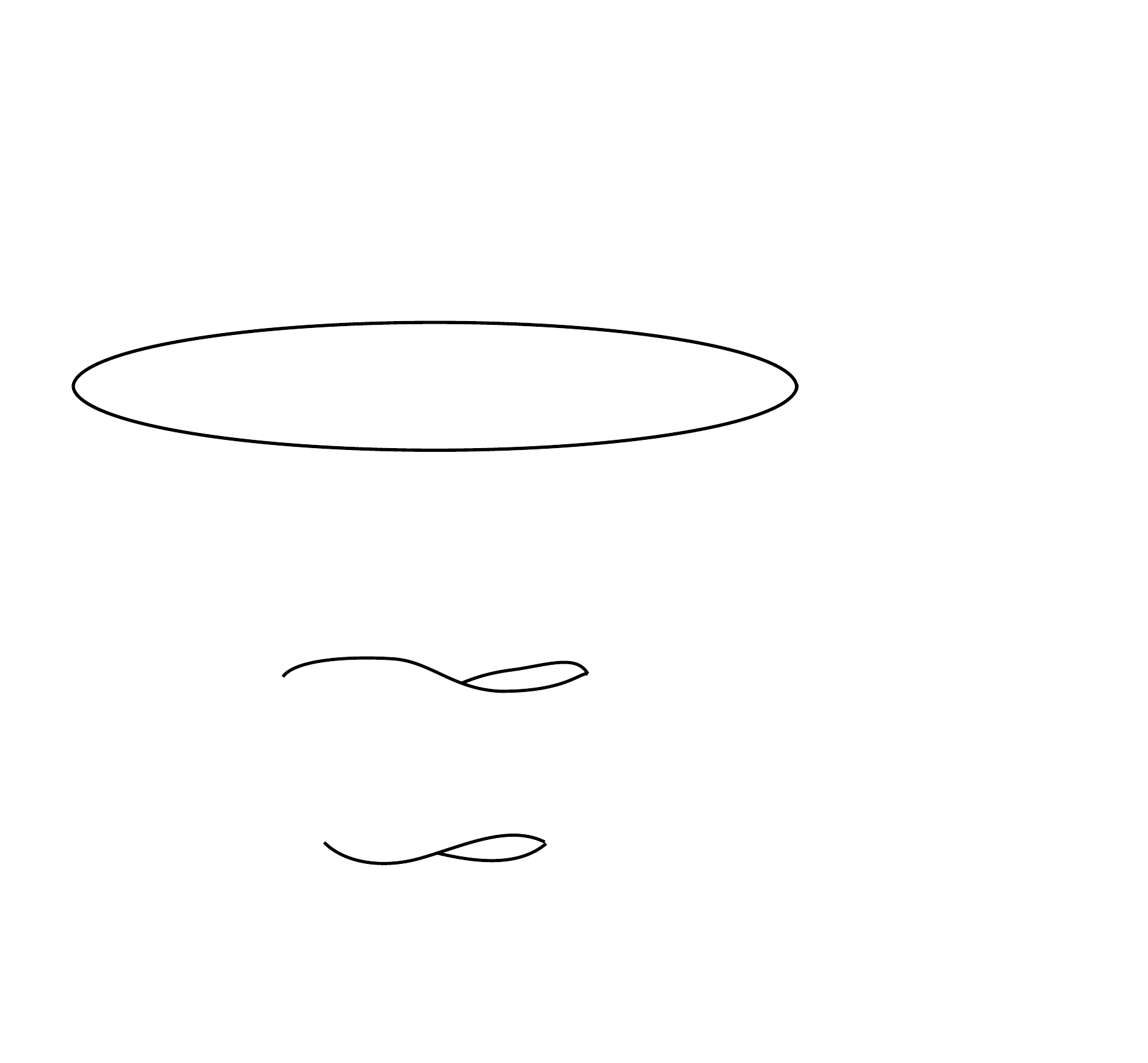' (pdf, eps, ps)
%%
%% To include the image in your LaTeX document, write
%%   \input{<filename>.pdf_tex}
%%  instead of
%%   \includegraphics{<filename>.pdf}
%% To scale the image, write
%%   \def\svgwidth{<desired width>}
%%   \input{<filename>.pdf_tex}
%%  instead of
%%   \includegraphics[width=<desired width>]{<filename>.pdf}
%%
%% Images with a different path to the parent latex file can
%% be accessed with the `import' package (which may need to be
%% installed) using
%%   \usepackage{import}
%% in the preamble, and then including the image with
%%   \import{<path to file>}{<filename>.pdf_tex}
%% Alternatively, one can specify
%%   \graphicspath{{<path to file>/}}
%% 
%% For more information, please see info/svg-inkscape on CTAN:
%%   http://tug.ctan.org/tex-archive/info/svg-inkscape
%%
\begingroup%
  \makeatletter%
  \providecommand\color[2][]{%
    \errmessage{(Inkscape) Color is used for the text in Inkscape, but the package 'color.sty' is not loaded}%
    \renewcommand\color[2][]{}%
  }%
  \providecommand\transparent[1]{%
    \errmessage{(Inkscape) Transparency is used (non-zero) for the text in Inkscape, but the package 'transparent.sty' is not loaded}%
    \renewcommand\transparent[1]{}%
  }%
  \providecommand\rotatebox[2]{#2}%
  \newcommand*\fsize{\dimexpr\f@size pt\relax}%
  \newcommand*\lineheight[1]{\fontsize{\fsize}{#1\fsize}\selectfont}%
  \ifx\svgwidth\undefined%
    \setlength{\unitlength}{523.12171275bp}%
    \ifx\svgscale\undefined%
      \relax%
    \else%
      \setlength{\unitlength}{\unitlength * \real{\svgscale}}%
    \fi%
  \else%
    \setlength{\unitlength}{\svgwidth}%
  \fi%
  \global\let\svgwidth\undefined%
  \global\let\svgscale\undefined%
  \makeatother%
  \begin{picture}(1,0.92559549)%
    \lineheight{1}%
    \setlength\tabcolsep{0pt}%
    \put(0.49140704,0.17876875){\makebox(0,0)[lt]{\lineheight{1.25}\smash{\begin{tabular}[t]{l}$\Sigma$\end{tabular}}}}%
    \put(0,0){\includegraphics[width=\unitlength,page=1]{fig3_connected_sum_3.pdf}}%
    \put(0.53510867,0.26820039){\makebox(0,0)[lt]{\lineheight{1.25}\smash{\begin{tabular}[t]{l}$U$\end{tabular}}}}%
    \put(0,0){\includegraphics[width=\unitlength,page=2]{fig3_connected_sum_3.pdf}}%
    \put(-0.00289354,0.66766013){\makebox(0,0)[lt]{\lineheight{1.25}\smash{\begin{tabular}[t]{l}$\partial B_R$\end{tabular}}}}%
    \put(0,0){\includegraphics[width=\unitlength,page=3]{fig3_connected_sum_3.pdf}}%
  \end{picture}%
\endgroup%

	\caption{$M'=B_R\# \mathbb{R}^n$ in Lemma~\ref{lemma:interpolation2}}
	\label{figure3}
\end{figure}

In the following proposition, we carry out Step (2) for the zero Bartnik boundary data.
\begin{proposition}\label{prop:surjectivity 2}
Let $(M, g)$ be ALH. Then the linearized scalar curvature $L_g: \big\{ h\in \C^{2,\alpha}_{-q}(M) : h^\intercal =0, DH|_g(h)=0  \mbox{ on } \Sigma\big \}\to \C^{0,\alpha}_{-q}(M)$ is surjective. 
\end{proposition}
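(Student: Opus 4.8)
The plan is to carry out Step (2) of the outline preceding the statement: use Lemma~\ref{lemma:interpolation2} to move the boundary portion of the problem into an asymptotically flat manifold, kill the (linearized) Bartnik data there with a localized form of Theorem~\ref{thm:AF}, and then clean up in the interior with Theorem~\ref{thm:surjectivity0}.

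First I would fix a bounded collar neighborhood $U$ of $\Sigma$ and, by Lemma~\ref{lemma:interpolation2}, an asymptotically flat manifold $(M',g')$ containing an isometric copy of $(\overline U,g)$. Given $f\in\C^{0,\alpha}_{-q}(M)$, I would extend $f|_U$ to a compactly supported $\C^{0,\alpha}$ function $\tilde f$ on $M'$ and apply Theorem~\ref{thm:AF} to obtain $h'\in\C^{2,\alpha}_{-s}(M')$ with $\big(L_{g'}h',(h')^\intercal,DH|_{g'}(h')\big)=(\tilde f,0,0)$. To make this solution compactly supported near $\Sigma$, I would multiply by a cutoff $\chi$ that is $\equiv 1$ on a smaller collar $U'$ of $\Sigma$ and is supported in $U$, and transplant $\chi h'$ to $M$ (extended by zero), calling it $h_0$. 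Since $\chi\equiv 1$ near $\Sigma$ and since $(\cdot)^\intercal$ and $DH|_g$ depend only on the $1$-jet of the tensor along $\Sigma$ (formula \eqref{eq:mean-curvature}), $h_0$ still satisfies $h_0^\intercal=0$ and $DH|_g(h_0)=0$ on $\Sigma$; since $g'=g$ on $U$ and $\nabla\chi=0$ on $U'$, we get $L_g h_0=f$ on $U'$; and $h_0\in\C^{2,\alpha}_{-q}(M)$ since it is compactly supported.

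Next I would set $f_1:=f-L_g h_0\in\C^{0,\alpha}_{-q}(M)$, which vanishes on the collar $U'$. I would check that such an $f_1$ lies in $\mathcal B^{0,\alpha}(M)$: the portion of the $\mathcal B^{0,\alpha}$-norm concentrated near $\Sigma$ is zero because $f_1$ vanishes there, while near infinity the defining conditions amount to the $\C^{0,\alpha}_{-q}$ bound together with the inclusion $\C^{0,\alpha}_{-q}(M\setminus B_R)\subset L^2_{\rho^{-1}}(M\setminus B_R)$ (valid since $\delta<1$, as noted after \eqref{eq:d}) and the pointwise estimate $|f_1|\lesssim r^{-q}\lesssim\rho^{1/2}$. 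Then Theorem~\ref{thm:surjectivity0} yields $h_1\in\mathcal B^{2,\alpha}(M)\subset\C^{2,\alpha}_{-q}(M)$ with $L_g h_1=f_1$, and since $h_1$ and $\nabla_g h_1$ decay exponentially toward $\Sigma$ they vanish on $\Sigma$, hence $h_1^\intercal=0$ and $DH|_g(h_1)=0$ by \eqref{eq:mean-curvature}. Finally $h:=h_0+h_1$ solves $L_g h=L_g h_0+f_1=f$ with $h^\intercal=0$ and $DH|_g(h)=0$ on $\Sigma$, which is the claimed surjectivity.

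The step I expect to require the most care is the localization of Theorem~\ref{thm:AF}: that theorem furnishes a decaying, not compactly supported, solution on $M'$, so I must cut off and verify that the cutoff neither spoils the zero Bartnik data nor destroys the identity $L_g h_0=f$ in a genuine neighborhood of $\Sigma$. Both survive because the Bartnik data and the identity $L_g(\chi h')=L_g h'$ are local statements valid wherever $\chi\equiv 1$; the remainder is routine bookkeeping with the weighted norms.
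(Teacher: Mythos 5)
Your proof is correct and follows essentially the same route as the paper's: build an asymptotically flat manifold containing a collar of $\Sigma$ via Lemma~\ref{lemma:interpolation2}, solve with zero linearized Bartnik data there using Theorem~\ref{thm:AF}, cut off to get a compactly supported $h_0$ with $L_g h_0=f$ near $\Sigma$, and absorb the remainder $f-L_gh_0\in\mathcal B^{0,\alpha}(M)$ with Theorem~\ref{thm:surjectivity0}. The only differences are cosmetic (you extend $f|_U$ to all of $M'$ where the paper prescribes a source $f_0$ supported in an inner collar, and you spell out the membership $f-L_gh_0\in\mathcal B^{0,\alpha}(M)$ a bit more explicitly).
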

\begin{proof}
Given $f\in \C^{0,\alpha}_{-q}(M)$, we will find $h\in \C^{2,\alpha}_{-q}(M)$ with  $h^\intercal =0, DH|_g(h)=0 $ on $\Sigma$ such that $L_g h = f$.  

Fix two nonempty bounded open subsets $U_1, U_2\subset M$  with $\Sigma\subset U_1$ and $\overline{U_1} \subset U_2$. By Lemma~\ref{lemma:interpolation2}, there exists an asymptotically flat manifold $(M', g')$ that contains an isometric copy of $(\overline{U_2}, g)$. We still denote the isometric image in $(M', g')$ by $(\overline{U_2}, g)$ and thus $g'=g$ on $\overline{U_2}$.   Let  $f_0$ be a function supported in $U_1$ such that $f_0=f$ in a collar neighborhood of $\Sigma$. With respect to the asymptotically flat manifold $(M', g')$, Theorem~\ref{thm:AF} says that there exist $h_0$ satisfying 
\begin{align}
	L_{g'} (h_0) &= f_0\quad \mbox{ in } M' \notag\\
	h_0^\intercal&=0,\quad DH|_{g'}(h_0)=0\quad \mbox{ on }\Sigma. \label{eq:Bartnik}
\end{align}
 Since $h_0$ may not be compactly supported, we multiply $h_0$ by a smooth bump function $\eta$ satisfying $\eta  = 1$ in $U_1$  and $\eta = 0$ outside $U_2$. Since $g'=g$ in $\overline{U_2}$, with respect to the ALH metric $g$, we have
\[
	L_{g} (\eta h_0) = \left\{ \begin{array}{ll} f_0 & \mbox{ in } U_1 \\ 0 & \mbox{ outside } U_2 \end{array} \right..
\]	
Therefore, we can view $L_g(\eta h_0)$ as a scalar function on the ALH manifold $(M, g)$ with support in $\overline{U_2}$, which we smoothly extend to be zero outside $\overline{U_2}$. Therefore, $f- L_{g} (\eta h_0)$ vanishes in a collar neighborhood of $\Sigma$, and thus $f- L_{g} (\eta h_0)\in \mathcal{B}^{0,\alpha} (M)$. By Theorem~\ref{thm:surjectivity0}, there exists $h_1\in \mathcal{B}^{2,\alpha}(M)$ such that 
\[
L_g (h_1) = f - L_g(\eta h_0 ).
\]
 That is,  we obtain a solution $h_1+\eta h_0$ solving $L_g (h_1+ \eta h_0) = f$. It is direct to see that $h_1+\eta h_0 \in  \C^{2,\alpha}_{-q}(M)$ and has vanishing linearized Bartnik boundary data. It completes the proof.

\end{proof}

We combine the above results to prove the main theorem in this section, Theorem~\ref{thm:surjectivity-Bartnik}.%\margin{{\color{red} Theorem 4.5 is a repetition of Theorem 4.1. Maybe we can remove it?}}

\begin{proof}[Proof of Theorem~\ref{thm:surjectivity-Bartnik}]
We will show that for arbitrarily fixed $(\tau, \phi) \in \C^{2,\alpha}(\Sigma)\times \C^{1,\alpha}(\Sigma)$ and for a given $f\in \C^{0,\alpha}_{-q}(M)$, there is $h\in \C^{2,\alpha}_{-q}(M)$ solving 
\begin{align}\label{eq:Bartnik-boundary}
\begin{split}
	&L_g (h) = f\quad \mbox{ in } M\\
	&\left \{ \begin{array}{l} h^\intercal = \tau \\ DH|_g(h) = \phi \end{array} \right. \quad \mbox{ on } \Sigma.
\end{split}
\end{align}
The following argument is standard (see, e.g. Corollary 2.6 in \cite{Anderson:2019tm}). Let $\{ e_0=\nu, e_1,\dots, e_{n-1}\}$ be a local orthonormal frame near $\Sigma$. We can find a compactly supported $(0,2)$-tensor $\hat{\tau}$ such that  for $\alpha, \beta=1,\dots, n-1$, $\hat{\tau}_{\alpha \beta} = \tau_{\alpha \beta}$, $\hat{\tau}_{\alpha 0}=0, \hat{\tau}_{00}=0$, and  $\nu( \sum_\alpha \hat{\tau}_{\alpha \alpha} ) =2\phi$  on $\Sigma$.  By construction and using \eqref{eq:mean-curvature},
\[
	(\hat{\tau}^\intercal, DH|_g(\hat{\tau})) = (\tau, \phi). 
\]
For this $\hat{\tau}$, we can find $h_0\in \C^{2,\alpha}_{-q}(M)$ with $(h_0^\intercal, DH|_g(h_0) )=0$ on $\Sigma$ that satisfies the equation $L_g (h_0)=-L_g (\hat{\tau})+f$ in $M$  by Proposition~\ref{prop:surjectivity 2}. We then let 
\[
	h =h_0+ \hat{\tau}.
\]
\end{proof}

\section{Mass minimizers are static}\label{sec:static}

We will apply  Theorem~\ref{thm:boundary0} to characterize a mass minimizer  and prove Theorem~\ref{thm:static1} and Theorem~\ref{thm:positive}.  We let $(\M, \mb)$ be a static, reference metric with a static potential $V_0$. We let $(M, g)$ be an ALH manifold with respect to $(\M, \mb)$. Denote by $\mathcal{U}$ an open neighborhood of $g$ in $\C^{2,\alpha}_{-q}(M)$ that contains only Riemannian metrics. 

Let $V$ be a scalar function such that $V-V_0\in \C^2_{1-q}(M)$. Define the functional $\mathcal{F}_V$ for $\gamma\in \mathcal{U}$ by 
\begin{equation}\label{eq:functional F}
	\mathcal{F}_V(\gamma)=m(\gamma, V)-\int_M V\big(R_\gamma+n(n-1)\big)\,d\mu_g,
\end{equation}
where we recall the definition of the mass integral $m(\gamma, V)$ in \eqref{eq:mass2}. 

The above functional does not obviously take finite values since, without further assuming $r (R_\gamma + n(n-1) )\in L^1(M)$,  $m(\gamma, V)$ and the volume integral may individually diverge. The following lemma gives an alternative expression~\eqref{eq:alternative} for $\mathcal{F}_V$, from which one can see that $\mathcal{F}_V$ does take finite values (see the proof of Lemma \ref{lemma:massfinite}). 
\begin{lemma}[{Cf. \cite[Lemma 4.1]{Huang:2020tm}} and \cite{Anderson:2013vx}]\label{lemma:functional}
The functional $\mathcal{F}_V:\mathcal{U}\to \mathbb{R}$ can be alternatively expressed as 
\begin{align}\label{eq:alternative}
\begin{split}
	\mathcal{F}_V(\gamma) &= \int_M\Big(V\big( L_g e- R_\gamma - n(n-1)\big)  -e\cdot L_g^* V \Big)\, d\mu_g\\
	&\quad + \int_{\Sigma} \left[V\big(\mathrm{div}\, e (\nu) -\nu (\tr  \, e) \big)+ (\tr  \,e )\,  \nu (V) - e(\nabla V, \nu)\right] \, d\sigma_g
\end{split}
\end{align}
where $e= \gamma -\mb$ (we extend $\mb$ to be a smooth Riemannian metric everywhere on $M$), the unit normal~$\nu$ with respect to $g$ points to infinity, and the geometric operators  are all with respect to $g$. 

Furthermore, the first variation of $\mathcal{F}_V$ at $g$ is given by, for any symmetric $(0,2)$-tensor $h$,
\begin{align}\label{eq:first}
	D\mathcal{F}_V|_g(h)=-\int_M h \cdot L_g^* V\, d\mu_g + \int_\Sigma \Big( -V\big(2DH|_g(h)+A_g\cdot h^\intercal\big) + \nu(V) \,\tr  h^\intercal \Big)\,d\sigma.
\end{align}
\end{lemma}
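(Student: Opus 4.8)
The plan is to establish the two displayed identities in Lemma~\ref{lemma:functional} by essentially unwinding the definition of $m(\gamma, V)$ against Green's formula, exactly as in the proof of Lemma~\ref{lemma:massfinite}, and then to differentiate the resulting expression in $\gamma$ at $\gamma = g$.

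\textbf{Step 1: the alternative formula \eqref{eq:alternative}.} I would start from the identity \eqref{eq:mass} already derived in the proof of Lemma~\ref{lemma:massfinite}, namely
\[
	m(\gamma, V) = \int_M V L_g e \, d\mu_g - \int_M e\cdot L_g^* V\, d\mu_g + \int_\Sigma \left[V\big((\mathrm{div}\, e)(\nu) - \nu(\tr e)\big) + (\tr e)\nu(V) - e(\nabla V, \nu)\right] d\sigma_g,
\]
with $e = \gamma - \mb$ and all operators taken with respect to $g$ (where $\mb$ is extended to a global smooth metric). Subtracting $\int_M V(R_\gamma + n(n-1))\, d\mu_g$ from both sides and recognizing the left side as $\mathcal{F}_V(\gamma)$ immediately yields \eqref{eq:alternative}; the finiteness of each term is exactly the content verified at the end of the proof of Lemma~\ref{lemma:massfinite} (using $L_g e = R_\gamma + n(n-1) + O(r^{-2q})$, that $V$ grows at most linearly by Lemma~\ref{lemma:linear}, the $L^1$ assumption \eqref{eq:L1}, and $L_g^* V = O(r^{1-q})$). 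So this step is essentially bookkeeping.

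\textbf{Step 2: the first variation \eqref{eq:first}.} Here I would differentiate \eqref{eq:alternative} along a path $\gamma = g + th$ at $t = 0$. Write $e = e_0 + th$ where $e_0 = g - \mb$. The volume integrand is $V(L_g e - R_\gamma - n(n-1)) - e\cdot L_g^* V$; its $t$-derivative is $V(L_g h - DR|_g(h)) - h\cdot L_g^* V = -h\cdot L_g^* V$, since $DR|_g = L_g$ and the first-order terms cancel (there is no boundary term here because the cancellation is pointwise). For the boundary integral over $\Sigma$, the geometric operators ($\mathrm{div}$, $\tr$, $\nabla$, $d\sigma_g$, $\nu$) are all fixed with respect to $g$ and do not vary, so the $t$-derivative of $\int_\Sigma [V((\mathrm{div}\, e)(\nu) - \nu(\tr e)) + (\tr e)\nu(V) - e(\nabla V, \nu)]\, d\sigma_g$ is obtained by simply replacing $e$ with $h$:
\[
	\int_\Sigma \left[V\big((\mathrm{div}\, h)(\nu) - \nu(\tr h)\big) + (\tr h)\nu(V) - h(\nabla V, \nu)\right]\, d\sigma_g.
\]
So $D\mathcal{F}_V|_g(h) = -\int_M h\cdot L_g^* V\, d\mu_g + (\text{the above boundary integral})$, and it remains to rewrite that boundary integral in the stated form involving $DH|_g(h)$, $A_g\cdot h^\intercal$, and $\tr h^\intercal$.

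\textbf{Step 3: rewriting the boundary integral — the main obstacle.} This is the one genuinely computational step. I would decompose $h$ near $\Sigma$ in the frame $\{\nu, e_\alpha\}$, writing $\tr h = h(\nu,\nu) + \tr h^\intercal$, and expand $(\mathrm{div}\, h)(\nu) = \nabla_\nu(h(\nu,\nu)) + \sum_\alpha \nabla_{e_\alpha} h(e_\alpha, \nu) - (\text{Christoffel corrections})$, using the second fundamental form $A_g$ (recall $A_g$ is the tangential part of $\nabla\nu$) to convert the normal-covariant-derivative corrections into contractions of $A_g$ with $h^\intercal$ and into $H_g\, h(\nu,\nu)$ terms. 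The one-form $\omega(e_\alpha) = h(\nu, e_\alpha)$ and its surface divergence $\mathrm{div}_\Sigma\omega$ will appear, and after grouping, the combination $\tfrac12 \nu(\tr h^\intercal) - \mathrm{div}_\Sigma\omega - \tfrac12 h(\nu,\nu)H_g$ should be recognized as $DH|_g(h)$ via formula \eqref{eq:mean-curvature}; the coefficient $2$ in front of $DH|_g(h)$ in \eqref{eq:first} reflects that two copies of this combination arise (one from $V(\mathrm{div}\,h)(\nu)$-type terms and one from $-V\nu(\tr h)$). The leftover terms should assemble into $-V A_g\cdot h^\intercal$ and $\nu(V)\tr h^\intercal$; an integration by parts over the closed manifold $\Sigma$ (no boundary of $\Sigma$) moves a derivative off $\omega$ onto $V$ where needed, and the $h(\nabla V,\nu) = \nu(V)h(\nu,\nu) + \sum_\alpha e_\alpha(V)\,\omega(e_\alpha)$ split is used to cancel the $h(\nu,\nu)$-terms. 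I expect the careful tracking of the $h(\nu,\nu)$ and $\omega$ contributions — making sure every such term cancels so that only $h^\intercal$ and its $\nu$-derivative survive — to be where the work lies; this mirrors the corresponding computation in \cite[Lemma 4.1]{Huang:2020tm} and the boundary analysis of Anderson--Jauregui~\cite{Anderson:2019tm}, so I would follow those references for the precise grouping.
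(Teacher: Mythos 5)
Your proposal is correct and follows essentially the same route as the paper: substitute the integration-by-parts identity \eqref{eq:mass} from the proof of Lemma~\ref{lemma:massfinite} into the definition of $\mathcal{F}_V$ to get \eqref{eq:alternative}, differentiate linearly in $e$ (with all $g$-dependent operators fixed) to get the raw first variation, and then rewrite the boundary integrand via the standard identities expressing $V(\mathrm{div}\,h(\nu)-\nu(\tr h))$ and $(\tr h)\nu(V)-h(\nabla V,\nu)$ in terms of $DH|_g(h)$, $A_g\cdot h^\intercal$, $\tr h^\intercal$, and $\omega$, followed by an integration by parts on $\Sigma$ to remove the $\omega$ terms. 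The paper simply cites \cite[Proposition 3.1]{Anderson:2013vx} for the boundary identities you propose to derive in Step 3, so the two arguments coincide.
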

\begin{proof}
By \eqref{eq:mass}, we have 
\begin{align*}
	m(\gamma, V) &= \int_{M} V L_g e\, d\mu_g - \int_{M} e\cdot L_g^* V \, d\mu_g\\
	&\quad  + \int_{\Sigma} \left[V\big(\mathrm{div}\, e (\nu) -\nu (\tr  \, e) \big)+ (\tr  \,e )\,  \nu (V) - e(\nabla V, \nu)\right] \, d\sigma_g.
\end{align*}
Substituting the above formula for $m(\gamma, V)$ into \eqref{eq:functional F} gives the desired expression~\eqref{eq:alternative}. 

From \eqref{eq:alternative}, we compute the first variation: 
\begin{align}
\label{eq:first2}
	D\mathcal{F}_V|_g(h)=-\int_M h \cdot L_g^* V\, d\mu_g +  \int_{\Sigma} \left[V\big(\mathrm{div}\, h (\nu) -\nu (\tr  \, h) \big)+ (\tr  \,h )\,  \nu (V) - h(\nabla V, \nu)\right] \, d\sigma_g.
\end{align}
One can re-express the boundary integrands as follows (see e.g. \cite[Proposition 3.1]{Anderson:2013vx}): 
\begin{align*}
	V(\mathrm{div} h(\nu) - \nu(\tr \, h ) )&= V(-2DH|_g(h) - A_g\cdot h^\intercal -  \mathrm{div}_\Sigma \omega)\\
	(\tr \, h) \, \nu (V) - h(\nabla V, \nu) &= (\tr \, h^\intercal ) \nu(V) - \omega ((\nabla V)^\intercal)
\end{align*}
where $\omega$ is the one-form on the tangent bundle of $\Sigma$ defined by $\omega(\cdot) = h(\nu, \cdot)$.  Substituting the right hand sides above into  \eqref{eq:first2} and applying integration by parts to eliminate the term involving $\omega$ gives the stated first variation formula.

\end{proof}

We prove Theorem~\ref{thm:static1}. We restate the statement and also spell out the assumption~($\star$); the neighborhood $\mathcal{U}_\star$ in the statement is defined in \eqref{eq:neighborhood}.

\begin{manualtheorem}{\ref{thm:static1}}
Let $(\M, \mb)$ be a static, reference metric with a static potential $V_0$. Let $(M,g)$ be an ALH manifold  with respect to $(\M, \mb)$, having nonempty boundary $\Sigma$. Suppose
\begin{itemize}  
\item[($\star$)] There is a neighborhood $\mathcal{U}_{\star}$ of $g$ such that for any $\gamma\in \mathcal{U}_{\star}$ with $R_\gamma = R_g$ in $M$, we have $m(\gamma, V_0)\ge m(g, V_0)$. 
\end{itemize}
 	Then $(M, g)$ is static with a static potential $V$ satisfying $V -V_0=O(r^{-d})$ for some number $d>0$.
\end{manualtheorem}

%Let $(M,g)$ be an ALH manifold of type $\kappa=1,0,$ or $-1$ with compact nonempty boundary $\Sigma:=\partial M$. Consider the following Banach space of symmetric $(0,2)$-tensors
%\begin{equation}\label{eq:banach space}
%	\mathcal{B}=\{g+h:h\in \mathcal{B}_2(M)\text{ is a symmetric }(0,2)\text{-tensor}\}.
%\end{equation}
%Let $\mathcal{U}\subset\mathcal{B}$ be an open neighborhood of $g$ consisting of positive definite tensors.

%By \cite[Lemma 4.2]{Huang:2020tm} (cf. \cite[Lemma 3.3]{Qing:2003}), we can obtain the eigenfunction $f_0:M\to\mathbb{R}$ such that
%\begin{align*}
%	&\Delta f_0=nf_0\text{ in }M,\\
%	&f_0=0\text{ on }\Sigma,
%\end{align*}
%and $f_0=\sqrt{r^2+\kappa}+O^{0,\alpha}(r^{1-q}).$ Now we define the functional $\mathcal{F}$ on $\mathcal{U}$ by
%\begin{equation}\label{eq:functional F}
%	\mathcal{F}(\gamma)=m(\gamma)-\int_M (R_\gamma+n(n-1))f_0\,d\mu_g.
%\end{equation}

%By \cite[Lemma 4.1]{Huang:2020tm}, it follows that 
%the map $\mathcal{F}:\mathcal{U}\rightarrow\mathbb{R}$ is well-defined on $\mathcal{U}$. Moreover, the linearization $D\mathcal{F}|_g$ at the metric $g$ of the functional $\mathcal{F}$ is given by $D\mathcal{F}|_g:\mathcal{B}_2(M)\rightarrow\mathbb{R}$ and
%\begin{equation}
%	D\mathcal{F}|_g(h)=-\int_M \langle h, L_g^* f_0\rangle_g\, d\mu_g.
%\end{equation}
%Note that the boundary term above vanishes since $h$ decays in an exponential order near $\Sigma$.

\begin{proof}
We extend $V_0$ to be defined everywhere in $(M, g)$, and define  the functional $\mathcal{F}_{V_0}$ as in \eqref{eq:functional F}. The subscript $V_0$ will be omitted for the rest of proof.

Let $\mathcal U$ be a small open neighborhood of $g$ in $g+\C^{2, \alpha}(M)$. The assumption~($\star$)  implies that the functional $\mathcal{F}: \mathcal U\to \mathbb R$ achieves a local minimum at $g$ subject to the constraints $R_\gamma = R_g$ in $M$ and $(\gamma^\intercal, H_\gamma) = (g^\intercal, H_g)$ on $\Sigma$. Theorem~\ref{thm:boundary0}  implies  that the linearization of the constraints
\[
	T:   \C^{2,\alpha}_{-q}(M)\longrightarrow \C^{0,\alpha}_{-q}(M) \times \C^{2,\alpha}(\Sigma)\times \C^{1,\alpha}(\Sigma)
\] 
is surjective, where recall that $T(h) = (L_gh, h^\intercal, DH|_g(h))$ as defined in \eqref{eq:T}. 

We can then apply the Method of Lagrange Multiplier (see, e.g. \cite[Theorem D.1]{Huang:2020um}) and find a ``Lagrange multiplier'' $(\lambda, \mu, \nu) \in \big( \C^{0,\alpha}_{-q}(M) \big)^* \times \big(\C^{2,\alpha}(\Sigma)\big)^* \times \big(\C^{1,\alpha}(\Sigma)\big)^*$ so that 
\[
D\mathcal{F}|_g(h) = \lambda (L_gh) + \mu(h^\intercal) + \nu(DH|_g(h)) \quad \mbox{ for any }  h\in  \C_{-q}^{2,\alpha}(M).
\]
Taking $h\in \C^\infty_c(\Int M)$ yields
\[
D\mathcal{F}|_g(h) = \lambda (L_gh) \quad \mbox{ for any }  h\in  \C^\infty_c(M).
\]
Together with the first variation formula of $\mathcal{F}$ in \eqref{eq:first}, %we have
%\[
%	-\int_M h\cdot L_g^*V_0\, d\mu_g=\int_M \lambda\, L_g h \, d\mu_g\quad \mbox{ for any }  h\in  \C^\infty_c(M).
%\]
it follows that $\lambda$ is a weak solution (as a distribution) to the equation
	\[
		L_g^*\lambda =-L_g^* V_0\in  \C^{0,\alpha}_{1-q}(M).
	\]
	By elliptic regularity, $\lambda\in \C^{2,\alpha}_{\text{loc}}(M)$, see e.g. \cite[Lemma 6.33]{Folland:1999}.  By Lemma~\ref{lemma:linear}, we have either $\lambda$ grows linearly in a cone or $\lambda=O(r^{-d})$ for some $d>0$. But  $\lambda$ cannot grow linearly in a cone $U$ because  $\lambda\in \big(\C^{0,\alpha}(M)\big)^*$ and by Remark~\ref{remark:dual}.
	
%	By choosing a function $f\in \mathcal{B}^{0,\alpha}(M)\subset L^2_{\rho^{-1}}(M)$ satisfying $f=O(r^{-q-\epsilon})$ for $0<\epsilon<\frac{n-q}{2}$, we have
%  \[
 %   \int_M \lambda f\, d\mu_g\ge C\int_{U} r^{1-q-\epsilon}r^{n-2} \, drd\sigma_g\ge C\int_U r^{-1+\epsilon}\, drd\sigma_g=\infty.
 % \]
 % Therefore, $\lambda$ cannot grow linearly in a cone.\margin{There was an inclusion error between $\mathcal{B}$ and $L^2_{\rho^{-1}}$, so I fixed this part. It's a little different from what the referee suggested, because the suggestion doesn't seem to work precisely.}} 
 To conclude, we show that $V:=V_0+\lambda$ is a static potential of $(M,g)$.

  %By Remark~\ref{remark:dual}, $\lambda$ cannot grow linearly in  a cone. 
\end{proof}

We next prove Theorem~\ref{thm:positive}. The previous theorem already shows a mass minimizer must be static, but in order to characterize a static metric, we would like to obtain a \emph{positive} static potential with additional boundary properties. We recall the statement and also spell out the assumption $(\star\star)_{H_0}$.

\begin{manualtheorem}{\ref{thm:positive}}
Let $(\M, \mb)$ be a static, reference metric with a static potential $V_0>0$ near infinity.	Let $(M,g)$ be an ALH manifold  with respect to $(\M, \mb)$, having nonempty boundary $\Sigma$, and let $H_0$ be a function (can be constant) such that $H_g\le H_0$.  Suppose that 
\begin{itemize}  
\item[$(\star\star)_{H_0}$] There is an open subset $\mathcal{U}$ of $g$ in $g+\C^{2,\alpha}_{-q}(M)$ such that for any $\gamma\in \mathcal{U}$ with $R_\gamma = R_g$ in $M$ and $H_\gamma \le H_0$ on $\Sigma$,  we have $m(\gamma, V_0)\ge m(g, V_0)$.
\end{itemize}
Then the following holds:
\begin{enumerate}
\item $(M, g)$ is static with a static potential $V$ satisfying $V -V_0 =O(r^{-d})$ for some number $d>0$.\label{item:static}
\item  $VA_g= \nu(V)g^\intercal$ on $\Sigma$.  \label{item:umbilic}
\item The static potential $V>0$ everywhere in $M$.\label{item:positive}
\item  $\Sigma$ has mean curvature $H_g =H_0$.   \label{item:constant}
\end{enumerate}
%Consequently $\Sigma$ is umbilic and $H_0$ must be constant.
\end{manualtheorem}
\begin{remark}\label{remark:AJ}
Our proof of $V>0$ is inspired by the variational argument in \cite[Theorem 2.10]{Anderson:2019tm} for asymptotically flat manifolds,  some of which also originated from the rigidity proof of \cite{Schoen:1979wh}. The proof in \cite{Anderson:2019tm} uses that the scalar curvature map is a submersion, which seems unknown yet in our setting. So we use an alternative argument by conformal transformation, stated as \emph{Claim} inside the following proof. 
\end{remark}
\begin{remark}\label{remark:positive}
For applications in the next section, we will mainly be concerned with $H_0 = n-1$. Then the above conclusions imply that $V= \nu(V)$  and $A_g = g^\intercal$ on $\Sigma$. 
\end{remark}
\begin{proof}[Proof of Theorem \ref{thm:positive}]
Since $(\star\star)_{H_0}$ implies $(\star)$ in Theorem~\ref{thm:static1}, Item~\eqref{item:static} follows. In particular, we have $R_g = -n(n-1)$. We first prove a general fact  that we can vary $g$ to get a family of metrics of the same constant scalar curvature and prescribe the variation of the Bartnik boundary data. \\

\noindent {\bf Claim:} Given an arbitrary pair of a symmetric $(0,2)$-tensor $\tau\in \C^{2,\alpha}(\Sigma)$ and a scalar function~$\phi\in \C^{1,\alpha}(\Sigma)$ on~$\Sigma$,  there is a differentiable family of metrics $g(t)$, for $|t|$ small, such that 
\begin{align}\label{eq:Bartnik}
\begin{split}
	g(0)&=g, \quad R_{g(t)} = R_g =  -n(n-1)\quad  \mbox{ in } M\\
	w^\intercal &= \tau,\quad \mbox{ and } \quad DH|_g(w) = \phi \quad \mbox{ on } \Sigma
\end{split}
\end{align}
where $w=g'(0)$. As a consequence of Lemma~\ref{lemma:functional}, the first variation of the mass integral for $g(t)$ is given by  
\begin{align}\label{eq:mass-decreasing}
	\left. \frac{d}{d t}\right|_{t=0} m(g(t), V_0) &=\int_\Sigma \Big( -2V\phi + \tau \cdot \big(-VA_g+ \nu(V) g^\intercal\big) \Big)\,d\sigma_g.
\end{align}

\noindent\emph{Proof of Claim.} By Theorem~\ref{thm:boundary0},  there is a symmetric $(0,2)$-tensor  $h\in \C^{2,\alpha}_{-q}(M)$ such that 
\[
(L_g h, h^\intercal, DH|_g(h)) = (0,\tau, \phi).
\] 
Define $\hat{g}(t) = g+th$. For each $t$, we would like to solve for a scalar function $u$ with $u - 1\in \C^{2,\alpha}_{-q}(M)$  and $\nu_{\hat{g}(t)}(u)=0$ on $\Sigma$ such that $ g(t) := u^{\frac{4}{n-2}} \hat{g}(t)$ has constant scalar curvature $-n(n-1)$. By conformal transformation formula, it is equivalent to solving the following system for $u\in 1+\C^{2,\alpha}_{-q}(M)$
\begin{align}\label{eq:conformal}
\begin{split}
	\Delta_{\hat{g}(t)} u- \tfrac{n-2}{4(n-1)} R_{\hat{g}(t)} u &= \tfrac{n(n-2)}{4} u^{\frac{n+2}{n-2}} \quad \mbox{ in } M\\
	\nu_{\hat{g}(t)}(u)&=0 \quad \mbox{ on } \Sigma.
\end{split}
\end{align}
It is clear that at $t=0$, $u\equiv 1$ is a solution.  We show how to obtain $u(t)$ for $|t|$ small by the inverse function theorem: Fix $t$ and rewrite the above system as the map $T: 1+\C^{2,\alpha}_{-q}(M)\to \C^{0,\alpha}_{-q}(M)\times \C^{1,\alpha}(\Sigma)$
\[
T(u) = \left(\Delta_{\hat{g}(t)} u- \tfrac{n-2}{4(n-1)} R_{\hat{g}(t)} u - \tfrac{n(n-2)}{4} u^{\frac{n+2}{n-2}},\, \nu_{\hat{g}(t)}(u)\right).
\]
Denote the linearization of $T$ at $u=1$ by $DT|_1: \C^{2,\alpha}_{-q}(M)\to \C^{0,\alpha}_{-q}(M)\times \C^{1,\alpha}(\Sigma)$ and compute
\begin{align*}
DT|_1(v) &= \left(\Delta_{\hat{g}(t)} v - \Big( \tfrac{n-2}{4(n-1)} R_{\hat{g}(t)} + \tfrac{n(n+2)}{4}\Big)v, \, \nu_{\hat{g}(t)}(v)\right).
\end{align*}
 Since  $DT|_1$ converges to $(\Delta_g v -n v, \nu_g (v))$ as $t\to 0$ and the latter map is an isomorphism by Lemma~\ref{lemma:isomorphism} and Remark~\ref{rmk:Neumann}, we conclude that for $|t|$ sufficiently small, each $DT|_1$ is an isomorphism and thus the corresponding map $T$ is a local diffeomorphism at $1$ by the inverse function theorem. Therefore, there is $u(t)$ solving~\eqref{eq:conformal}. The family of solutions $u(t)$ is differentiable in $t$ by  smooth dependence, so is $g(t) = u(t)^{\frac{4}{n-2}} \hat{g}(t)$. 
 
 We have shown $g(t)$ has constant scalar curvature $-n(n-1)$. We now compute variations of the Bartnik boundary data in \eqref{eq:Bartnik}. We begin by computing $u'(0)$.  Differentiating \eqref{eq:conformal} in $t$ at $t=0$ and using $u(0)=1$, $\left.\frac{\partial}{\partial t} \right|_{t=0}R_{\hat{g}(t)} = L_gh=0$, we get 
\begin{align*}
	\Delta_{g} u'(0)- n u'(0)&= 0\quad \mbox{ in } M\\
	\nu_{g} (u'(0)) &= 0\quad \mbox{ on } \Sigma.
\end{align*}
Thus $u'(0)$ is identically zero (see Lemma~\ref{lemma:isomorphism}). It is then direct to see that 
\[
	w:=g'(0) = \hat{g}'(0) = h.
\]
In particular, $w^\intercal= h^\intercal = \tau$ on $\Sigma$. To compute the mean curvature, we apply conformal transformation formula and use $\nu_{\hat{g}(t)}(u(t))=0$:
\[
	H_{g(t)} = u(t)^{\frac{-2}{n-2}} \left( H_{\hat{g}(t)} + \tfrac{2(n-1)}{n-2} u(t)^{-1} \nu_{\hat{g}(t)} (u(t)) \right) = u(t)^{\frac{-2}{n-2}}  H_{\hat{g}(t)}.
\] 
Differentiating the mean curvature identity in $t$ gives $DH|_g(w) = DH|_g(h) = \phi $. 

Let $\mathcal{F}_V$ be defined as \eqref{eq:functional F}. Since $g(t)$ has constant scalar curvature $-n(n-1)$ and $V$ is asymptotic to $V_0$, we have 
\[
	\mathcal{F}_V(g(t)) = m(g(t), V) = m(g(t), V_0).
\] 
Then the first variation formula \eqref{eq:first} says
\begin{align*}
	 \left. \frac{d}{d t}\right|_{t=0} m(g(t), V_0) &= \left. \frac{d}{d t}\right|_{t=0} \mathcal{F}_V(g(t)) = D\mathcal{F}_V|_g(w)\\
	&=\int_\Sigma \Big( -V\big(2DH|_g(w)+A_g\cdot w^\intercal\big) + \nu(V) \,\tr \, w^\intercal \Big)\,d\sigma_g\\
	&= \int_\Sigma \Big( -V\big(2\phi +A_g\cdot \tau\big) + \nu(V) \,\tr \, \tau \Big)\,d\sigma_g,
\end{align*}
where we use that $L_g^*V=0$ in the second line and $(w^\intercal, DH|_g(w) )= (\tau, \phi)$ in the third line.
Rearranging the integrands gives \eqref{eq:mass-decreasing}.

 \qed

The rest of the proof will proceed as follows: If, to get a contradiction, the desired conclusions do not hold, we can choose suitable $\tau$ and $\phi$ in \eqref{eq:Bartnik} to make \eqref{eq:mass-decreasing} strictly negative and also to ensure $H_{g(t)}< H_0$ for $t>0$. Then we get 
 contradiction to the assumption~$(\star\star)_{H_0}$.

 \vspace{10pt}
\noindent{\bf Item \eqref{item:umbilic}:} We prove $VA_g=\nu(V) g^\intercal $ on $\Sigma$. Suppose on contrary that $-VA_g+\nu(V) g^\intercal$ is not identically zero. We can find $\tau$ on $\Sigma$ such that 
\[
\int_\Sigma  \tau\cdot \big(-VA_g+\nu(V)g^\intercal \big)\, d\sigma_g<0.
\]
Then let $\phi<0$ on $\Sigma$ with $|\phi|$ sufficiently small such that \eqref{eq:mass-decreasing} is also negative: 
\[
	\left. \frac{d}{d t}\right|_{t=0} m(g(t), V_0)=\int_\Sigma \Big(-2V\phi + \tau\cdot \big(-VA_g+\nu(V)g^\intercal \big) \Big)\, d\sigma_g<0.
\]
Let $g(t)$ be a family of metrics as in \eqref{eq:Bartnik} for the above choice of $(\tau, \phi)$.  Therefore, for $t>0$, $H_{g(t)}< H_g\le H_0$ (because $\phi<0$) and $m(g(t), V_0) < m(g, V_0)$. It contradicts $(\star\star)_{H_0}$.

 \vspace{10pt}
\noindent{\bf Item \eqref{item:positive}:} We prove $V>0$ in $M$.  Note that $V>0$ near infinity (because $V_0>0$ near infinity by assumption). We first show that $V\ge 0$ on $\Sigma$, and thus   $V>0 $ in $\Int M$ by applying strong maximum principle to $\Delta_g V - nV=0$. Suppose, to get contradiction, that $V<0$ somewhere in $\Sigma$. We can find a function $\phi < 0$ on $\Sigma$ and 
\[
	\int_\Sigma -2  V\phi \, d\sigma_g <0.
\] 
We let $g(t)$ be the family of metrics from \eqref{eq:Bartnik} with $\tau = 0$ and $\phi$ as chosen. A similar argument as above gives contradiction to $(\star\star)_{H_0}$. To show that $V>0$ on $\Sigma$, we note that if $V(p)=0$ for some $p\in \Sigma$, then $\nu(V)(p)=0$ by Item~\eqref{item:umbilic}, and the static equation implies that $V(\gamma(t))=0$ is along the normal geodesic $\gamma(t)$ in $\Int M$  emanating from $p$, which violates that $V>0$ in $\Int M$.

\vspace{10pt}

 \noindent{\bf Item \eqref{item:constant}:} We prove that $\Sigma$ has mean curvature $H_g =H_0$. Suppose on the contrary $H_g<H_0$  somewhere. Choose a function $\phi$ on $\Sigma$ such that  $\phi <0$ on the set where $H_g = H_0$ and the zero set of $\phi$ is contained in the subset where $H_g <H_0$ (in other words,  $\{ p\in \Sigma: H_g(p) = H_0(p) \}\subsetneq \mathrm{supp} (\phi^-)$), and 
  \begin{align*}
 &\int_\Sigma -2V\phi \, d\sigma_g < 0 \quad \mbox{(note that $V >0$ from Item~\eqref{item:positive})},
 \end{align*}
Note that $\phi$ is necessarily positive somewhere in the subset where $H_g<H_0$, in order for the above integral condition to hold. Let $g(t)$ be a family of metrics as in \eqref{eq:Bartnik} for $\tau=0$ and the above choice of $ \phi$. The conditions on $\phi$ ensure that $H_{g(t)} < H_0$ for $t>0$ small.  We again get a contradiction.

\end{proof}

A main conclusion of Theorem~\ref{thm:positive} is to show positivity of a static potential. We note in the next lemma that under the given special boundary condition, one may be able to obtain positivity of $V$ in $\Int M$. The proof is a direct generalization of an argument for asymptotically flat manifolds (see, e.g. \cite[Section 1]{Huang:2018ue}). Note that this result is not used elsewhere in the paper.

\begin{lemma}\label{lem:nonnegative static potential}
	Let $(M,g)$ be a  ALH manifold with boundary $\Sigma$. Suppose $(M, g)$ is static with a static potential $V$ satisfying $V>0$ near infinity. Suppose $\Sigma$ is a locally outermost,  locally area-minimizing, minimal hypersurface. Then  $V>0$   in $ \Int M$ and $V=0$ on $\Sigma$.
\end{lemma}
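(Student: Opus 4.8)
The plan is to prove the two conclusions in the stated order: first that $V=0$ on $\Sigma$, and then that $V>0$ in $\Int M$, the latter being a short maximum‑principle consequence of the former. Throughout I use the static system \eqref{eq:static-system}, namely $\nabla_g^2V=V(\Ric_g+ng)$ and $\Delta_gV=nV$ (so $V\in\C^\infty_{\mathrm{loc}}(M)$ by elliptic regularity), together with the two structural facts about $\Sigma$: it is minimal, $H_g=0$, and it is stable (since it is locally area‑minimizing), i.e.
\[
   Q(\varphi):=\int_\Sigma\Big(|\nabla_\Sigma\varphi|^2-\big(|A_g|^2+\Ric_g(\nu,\nu)\big)\varphi^2\Big)\,d\sigma\ \ge\ 0\qquad\text{for all }\varphi\in\C^\infty(\Sigma).
\]
As a preliminary identity, decomposing $\Delta_gV=\Delta_\Sigma V+H_g\,\nu(V)+\nabla_g^2V(\nu,\nu)$ along $\Sigma$ and using $H_g=0$, $\Delta_gV=nV$, and $\nabla_g^2V(\nu,\nu)=V(\Ric_g(\nu,\nu)+n)$ yields $\Delta_\Sigma V+\Ric_g(\nu,\nu)V=0$ on $\Sigma$. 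I will assume $\Sigma$ is connected; otherwise one argues on each component.

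To prove $V\equiv0$ on $\Sigma$, suppose not. If $V|_\Sigma$ changes sign, then by the boundary identity $V|_\Sigma$ is a sign‑changing eigenfunction of $-\Delta_\Sigma-\Ric_g(\nu,\nu)$ with eigenvalue $0$, so the bottom eigenvalue of that operator is negative; since $-|A_g|^2\le0$, the bottom eigenvalue of the stability operator $-\Delta_\Sigma-|A_g|^2-\Ric_g(\nu,\nu)$ is then also negative, contradicting stability. Hence $V|_\Sigma$ has a strict sign; replacing $V$ by $-V$ if necessary (the remaining argument is purely local near $\Sigma$, and $-V$ is again a static potential) we may assume $V>0$ on $\Sigma$, and therefore $V>0$ on a collar neighborhood of $\Sigma$. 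Now flow $\Sigma$ toward infinity with normal speed $V$: let $\Sigma_t$, $t\in[0,\epsilon)$, be the hypersurfaces defined by $\partial_tF_t=V\nu_t$, $F_0=\operatorname{id}$; these foliate a collar of $\Sigma$ and are isotopic to $\Sigma$. From the standard evolution formula $\partial_tH_{\Sigma_t}=-\Delta_{\Sigma_t}V-\big(|A_t|^2+\Ric_g(\nu_t,\nu_t)\big)V$, eliminating $\Delta_{\Sigma_t}V$ exactly as in the preliminary identity (now on $\Sigma_t$, where the term $H_{\Sigma_t}\nu_t(V)$ survives), the Ricci terms cancel and one is left with the pointwise linear ODE along the flow lines
\[
   \partial_tH_{\Sigma_t}=H_{\Sigma_t}\,\nu_t(V)-|A_t|^2\,V,\qquad H_{\Sigma_0}=0 .
\]
Since the forcing term $-|A_t|^2V$ is $\le0$, comparison gives $H_{\Sigma_t}\le0$ for all $t\in[0,\epsilon)$. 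At this point the two geometric hypotheses finish the argument: by the first variation of area, $\tfrac{d}{dt}|\Sigma_t|=\int_{\Sigma_t}H_{\Sigma_t}\,V\,d\sigma\le0$, so $|\Sigma_t|\le|\Sigma|$ for small $t$; on the other hand, for small $t$ the surface $\Sigma_t$ is homologous to $\Sigma$ inside a fixed small neighborhood, so local area‑minimality gives $|\Sigma|\le|\Sigma_t|$. Hence $|\Sigma_t|$ is constant, so $\int_{\Sigma_t}H_{\Sigma_t}V\,d\sigma=0$, and since $V>0$ and $H_{\Sigma_t}\le0$ we get $H_{\Sigma_t}\equiv0$ for all small $t>0$: a one‑parameter family of minimal hypersurfaces lying strictly on the infinity side of $\Sigma$, contradicting that $\Sigma$ is locally outermost. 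Thus $V\equiv0$ on $\Sigma$.

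With $V\equiv0$ on $\Sigma$ and $V>0$ near infinity, the set $W=\{V<0\}$ is open with compact closure, and $V=0$ on $\partial W$ (which lies in $(\{V=0\}\cap\Int M)\cup\Sigma$). On $W$ we have $\Delta_gV=nV<0$, so $V$ cannot attain a negative minimum in the interior of $W$; since it vanishes on $\partial W$, this forces $W=\varnothing$, i.e. $V\ge0$ on $M$. Then $V\ge0$ solves $\Delta_gV-nV=0$, so by the strong maximum principle $V$ cannot vanish at an interior point unless $V\equiv0$; as $V>0$ near infinity, we conclude $V>0$ in $\Int M$.

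I expect the substance to be bookkeeping rather than a new idea: carefully setting up the normal flow $\Sigma_t$ and recording the evolution equation for $H_{\Sigma_t}$, and making the precise sense of ``locally outermost'' and ``locally area‑minimizing'' explicit enough that the collar foliation $\{\Sigma_t\}$ is an admissible family of competitors and genuinely contradicts those hypotheses. This is the ALH analogue of the argument in \cite[Section 1]{Huang:2018ue}; the only place the ALH structure enters is the hypothesis $V>0$ near infinity (consistent with Lemma~\ref{lemma:linear}), which is used only in the last step.
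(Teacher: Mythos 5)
Your argument is essentially the paper's: the same boundary identity $\Delta_\Sigma V+\Ric(\nu,\nu)V=0$ obtained from the static system and $H_g=0$, the same use of stability to give $V|_\Sigma$ a sign, the same Galloway flow $\partial_t\Sigma_t=V\nu$ with $\partial_t(H_{\Sigma_t}/V)=-|A_t|^2$, the same contradiction with the locally area-minimizing and locally outermost hypotheses, and the same strong-maximum-principle conclusion in the interior. The only point to tighten is the dichotomy ``$V|_\Sigma$ changes sign'' versus ``$V|_\Sigma$ has a strict sign'': this omits the case $V|_\Sigma\ge 0$, not identically zero, but vanishing somewhere. That case is excluded either by the strong maximum principle for $\Delta_\Sigma V+\Ric(\nu,\nu)V=0$ on the closed manifold $\Sigma$ (a nonnegative solution with a zero vanishes identically), or, as in the paper, by noting that stability forces the bottom eigenvalue of $-\Delta_\Sigma-\Ric(\nu,\nu)$ to be $\ge 0$, so a nontrivial $V$ in its kernel must be a ground state and hence nowhere zero; with that one line added, your proof is complete.
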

\begin{proof}
We describe how the  arguments in  \cite[Section 1]{Huang:2018ue} for asymptotically flat manifolds apply in our setting. The static equation for $V$ and that $\Sigma$ has zero mean curvature imply  
\[
	0 = \Delta_g V - nV = \Delta_\Sigma V + \nabla^2 V(\nu, \nu) - nV= \Delta_\Sigma V + \Ric(\nu, \nu) V. 
\]	
Since $\Sigma$ is in particular a stable minimal hypersurface, the stability implies that the least eigenvalue of $-\Delta_\Sigma u - (|A_g|^2+ \Ric(\nu, \nu))u$ is non-negative, and thus either $V$ is its first eigenfunction (thus $V$ has no zeros on $\Sigma$ and the second fundamental form $A_g=0$ on $\Sigma$) or $V$ is identically zero on $\Sigma$.  We will rule out the former case:  we may without loss of generality assume $V>0$ on $\Sigma$, as the case $V<0$ on $\Sigma$ can be argued similarly. By Galloway's monotonicity formula \cite[Lemma 3]{Galloway:1993tu}, consider the family hypersurfaces $\{\Sigma_t\}$ with $\Sigma_0=\Sigma$ and $\frac{\partial}{\partial t} \Sigma_t = V \nu$ where $\nu$ points to infinity. Then $\frac{\partial}{\partial t} \left( \frac{H_{\Sigma_t}}{V}\right) = -|A_g|^2$  on $\Sigma_t$ for all $t\ge 0$ small. The locally area-minimizing property implies that $\Sigma_t$, for $t>0$, form a foliation of totally geodesic hypersurfaces, but that contradicts the locally outermost property. 
To conclude, we show that  $V\equiv 0$ on $\Sigma$, and thus $V>0$ in $\Int M$ by the strong maximum principle. 

\end{proof}

\section{Static uniqueness and rigidity of the positive mass theorems}\label{sec:uniqueness}

The goal in this section is to characterize a mass minimizer  and prove Theorem~\ref{thm:uniqueness}, which then implies Theorem~\ref{thm:rigidity1} and Theorem~\ref{thm:rigidity0}. By Theorem~\ref{thm:positive}, it suffices to establish static uniqueness, Corollary~\ref{corollary:static} below. There have been various static uniqueness results by, for example, \cite{Wang:2005aa,Galloway:2015tj,Chrusciel:2020wg}. The boundary conditions that naturally arise in our setting seem to be different from theirs, but the proof is also based on the following fundamental identity of Y.~Shen~\cite{Shen:1997ug} and X. Wang~\cite{Wang:2005aa} (see also Lemma 4.4 and Proof of Theorem 2 in \cite{Huang:2020tm}): Let $(M, g)$ be a static,  ALH manifold with boundary $\Sigma$. 
Suppose the static potential $V >0 $ in $\Int M$. Then  
\begin{equation}\label{eq:mass formula}
	\int_{M} V^{-1} |\nabla^2 V-Vg|^2\, d\mu_g	=-\tfrac{n-2}{2}m(g, V)-\int_{\Sigma} \big(\Ric  + (n-1) g \big) (\nabla V, \nu) \, d\sigma_g
\end{equation}
where the unit normal on $\Sigma$ points to infinity. 

We shall see that boundary conditions obtained from  Theorem~\ref{thm:positive} and Remark~\ref{remark:positive} imply the boundary integral vanishes. 
\begin{lemma}\label{lemma:boundary}
Let $(M, g)$ be a static, ALH manifold with a static potential $V>0$. Suppose that $\Sigma$ is umbilic with mean curvature $(n-1)$ and that $V = \nu(V)$ on $\Sigma$. Then 
\[
	\int_{\Sigma} \big(\Ric  + (n-1) g \big) (\nabla V, \nu) \, d\sigma_g=0.
\]
As a consequence, if $m(g, V)=0$, then $V$ satisfies $\nabla^2 V-Vg=0$ in $M$.
\end{lemma}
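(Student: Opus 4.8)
The plan is to show that the integrand $(\Ric_g+(n-1)g)(\nabla V,\nu)$ is, on $\Sigma$, nothing but the surface Laplacian $-\Delta_\Sigma V$, so that its integral vanishes by the divergence theorem on the closed hypersurface $\Sigma$. The only ingredients are the static system and the two boundary hypotheses; there is no analytic subtlety.

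First I would use the static system \eqref{eq:static-system}, i.e. $\nabla^2 V = V(\Ric_g+ng)$ and $\Delta_g V = nV$, to eliminate the Ricci tensor. Since $V>0$ we get $\Ric_g+(n-1)g = V^{-1}\nabla^2 V - g$, hence on $\Sigma$
\[
(\Ric_g+(n-1)g)(\nabla V,\nu) = V^{-1}\,\nabla^2 V(\nabla V,\nu) - \nu(V).
\]
Then I would compute $\nabla^2 V(\nabla V,\nu)$ on $\Sigma$ in a local orthonormal frame $\{e_1,\dots,e_{n-1}\}$ of $T\Sigma$, extending $\nu$ by unit normal geodesics so that $\nabla_\nu\nu=0$. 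Umbilicity with $H_g=n-1$ means $A_g=g^\intercal$, i.e. $\nabla_{e_\alpha}\nu=e_\alpha$, so $\nabla^2 V(e_\alpha,\nu)=e_\alpha(\nu(V))-e_\alpha(V)$. Writing $\nabla V=(\nabla V)^\intercal+\nu(V)\nu$ and using $\nu(V)=V$ on $\Sigma$ (whence $e_\alpha(\nu(V))=e_\alpha(V)$ along $\Sigma$), the tangential contribution cancels and $\nabla^2 V(\nabla V,\nu)=\nu(V)\,\nabla^2 V(\nu,\nu)$ on $\Sigma$. For the normal second derivative I would use the standard splitting $\Delta_g V=\Delta_\Sigma V+H_g\,\nu(V)+\nabla^2 V(\nu,\nu)$ together with $\Delta_g V=nV$, $H_g=n-1$, $\nu(V)=V$, which yields $\nabla^2 V(\nu,\nu)=V-\Delta_\Sigma V$ on $\Sigma$. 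Combining, $\nabla^2 V(\nabla V,\nu)=V(V-\Delta_\Sigma V)$, and therefore
\[
(\Ric_g+(n-1)g)(\nabla V,\nu) = (V-\Delta_\Sigma V)-V = -\Delta_\Sigma V \qquad\text{on }\Sigma .
\]
Since $\Sigma$ is a closed hypersurface, $\int_\Sigma \Delta_\Sigma V\,d\sigma_g=0$, which gives the first assertion.

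For the consequence I would substitute the vanishing of the boundary integral into the Shen--Wang identity \eqref{eq:mass formula}: if in addition $m(g,V)=0$, then $\int_M V^{-1}|\nabla^2 V-Vg|^2\,d\mu_g=0$, and since $V>0$ this forces $\nabla^2 V-Vg\equiv 0$ in $M$.

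The only point requiring care is the bookkeeping of sign conventions (the direction of $\nu$, the relation $A_g(X,Y)=\langle\nabla_X\nu,Y\rangle$, and the sign of $H_g$) so that the two cancellations above occur as stated; once the static equation is used to trade $\Ric_g$ for $\nabla^2 V$, everything is elementary, and the identity \eqref{eq:mass formula} is taken as given.
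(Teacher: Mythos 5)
Your proof is correct and follows essentially the same route as the paper's: trading $\Ric$ for $\nabla^2 V$ via the static equation, using $A_g=g^\intercal$ and $\nu(V)=V$ to kill the tangential contribution (equivalently, the paper's $\Ric(e_\alpha,\nu)=0$), and using the Laplacian splitting with $H_g=n-1$ to reduce the normal--normal piece to $-\Delta_\Sigma V$, whose integral over the closed hypersurface $\Sigma$ vanishes. The final step via the Shen--Wang identity \eqref{eq:mass formula} matches the paper as well.
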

\begin{proof}
Let $\{ \nu, e_1,\dots, e_{n-1}\}$ be a local orthonormal frame near $\Sigma$ and $\alpha, \beta\in \{ 1,\dots, n-1\}$. We will show that, on $\Sigma$, 
\begin{align}\label{eq:boundary-integral}
\begin{split}
	\Ric (e_\alpha, \nu)&=0\\
	\int_\Sigma V \big(\Ric(\nu, \nu) + (n-1)\big)\, d\sigma_g &= 0.
\end{split}
\end{align}
Using that $V = \nu(V)$ and $A_g = g^\intercal$ on $\Sigma$, we compute
\begin{align*}
	0&=e_\alpha  (V-\nu(V)) = e_\alpha(V) - e_\alpha (\nu (V))\\
	&= e_\alpha(V) - \nabla^2 V(e_\alpha, \nu) - \nabla_{\nabla_{e_\alpha} \nu } V\\
	&= e_\alpha(V)  - V\Ric(e_\alpha, \nu)  - A_{\alpha \beta} e_\beta (V)\\
	&=- V\Ric(e_\alpha, \nu),
\end{align*}
where we use the static equation $\nabla^2 V = V (\Ric + ng)$. Since $V>0$, we get $\Ric(e_\alpha, \nu)=0$. Next, we use again $\nabla^2 V = V (\Ric + ng)$, $V= \nu(V)$, and $H_g = n-1$ to compute
\begin{align*}
	0 &= \Delta_g V - nV \\
	&= \Delta_\Sigma V + \nabla^2 V(\nu, \nu) + H_g \nu(V) - nV\\
	&= \Delta_\Sigma V + V \big( \Ric(\nu, \nu) + (n-1) \big).
\end{align*}
Integrating the previous identity gives the second identity in \eqref{eq:boundary-integral}. We now use \eqref{eq:boundary-integral} to compute the boundary integral:
\begin{align*}
	\int_{\Sigma} \big(\Ric  + (n-1) g \big) (\nabla V, \nu) \, d\sigma_g &= \int_{\Sigma} \nu(V) \big(\Ric  + (n-1) g \big) (\nu, \nu) \, d\sigma_g\\
	&= \int_{\Sigma} V\big(\Ric(\nu, \nu)  + (n-1) \big)\, d\sigma_g=0.
\end{align*}

To complete the proof, we use  \eqref{eq:mass formula} to get that the volume integral over $M$ is identically zero and hence $\nabla^2 V - Vg=0$. 
\end{proof}

A manifold that admits a positive function satisfying $\nabla^2 V-Vg=0$ can be characterized as in \cite{Tashiro:1965uj,Kanai:1983ut} for complete manifolds without boundary and in \cite[Proposition 4.1]{Galloway:2020to} for manifolds with boundary. We show how to extend their argument for the boundary conditions in our applications. 

\begin{lemma}[Cf. {\cite{Tashiro:1965uj,Kanai:1983ut,Galloway:2020to}}]\label{lemma:Obata}
Let $(\M, \mb)$ be a reference manifold with the conformal infinity $(N, h)$ of type $k$. Let $(M, g)$ be ALH with respect to $(\M, \mb)$  with possibly nonempty boundary $\Sigma$. Suppose there is a function $V$ satisfying $V>0$ in $\Int M$ and $\nabla^2 V = Vg$ in $M$
\begin{enumerate}
\item \label{item:complete} If $M$ is boundaryless, then $k=1$ and $(M, g)$ is isometric to the standard hyperbolic space. 
\item  \label{item:split}Suppose the boundary $\Sigma$ is nonempty, and suppose $V$ is constant on $\Sigma$. Then $(M, g)$ is isometric to the manifold $[c, \infty)\times N$ for some $c\in \mathbb{R}$ with the warped product metric of the form
\[
	dt^2 + \xi(t)^2 h
\]
where 
\[
	\xi (t) = \left\{ \begin{array}{ll} \sinh t & \mbox{ for } k=1 \\ e^t & \mbox{ for } k=0 \\ \cosh t & \mbox{ for } k=-1\end{array}\right..
\]
\item \label{item:CMC}Suppose the boundary $\Sigma$ is nonempty with mean curvature $H_\Sigma=(n-1)$, and suppose $\nu(V)>0$ on $\Sigma$. Then $k=0$, $V$ is constant on~$\Sigma$, and Item~\eqref{item:split} holds.  
\end{enumerate}
\end{lemma}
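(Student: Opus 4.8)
The plan is to analyze the level sets of $V$ and the integral curves of $\nabla V$. First, since $\nabla^2 V = Vg$ and $V>0$ in $\Int M$, the function $V$ is strictly convex along every geodesic, so it can have at most one critical point in $\Int M$; moreover $|\nabla V|^2 - V^2$ is constant along the flow of $\nabla V$ (differentiate: $\nabla_{\nabla V}(|\nabla V|^2 - V^2) = 2\nabla^2 V(\nabla V, \nabla V) - 2V\langle \nabla V, \nabla V\rangle = 0$). Let $c_0 = |\nabla V|^2 - V^2$, a global constant. For the boundaryless case~\eqref{item:complete}, the standard Tashiro–Kanai analysis applies: $V$ must attain an interior minimum (using the ALH asymptotics, $V\sim$ const$\cdot r$ near infinity, forcing $c_0>0$ would be inconsistent unless... — actually one checks $V\to\infty$ and the rescaled metric near the critical point $p_0$ of $V$ splits off as a cone, yielding $(M,g)\cong \mathbb H^n$, which forces $k=1$ since the conformal infinity must be the round sphere). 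I would simply cite \cite{Tashiro:1965uj,Kanai:1983ut} here since the argument is classical once we know $V$ has an interior minimum; the ALH hypothesis guarantees $V$ is unbounded, hence a minimum exists by convexity.

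For Item~\eqref{item:split}, suppose $V = c$ is constant on $\Sigma$. Set $t$ to be (a reparametrization of) the $V$-value, or more precisely work with arclength along the gradient flow. Because $V$ is constant on $\Sigma$ and $\nabla^2 V = Vg$, the boundary $\Sigma$ is a level set of $V$ with $\nabla V$ normal to it, and $A_\Sigma = \frac{\nabla^2 V}{|\nabla V|}\big|_{T\Sigma} = \frac{V}{|\nabla V|} g^\intercal$, so $\Sigma$ is umbilic with $H_\Sigma = (n-1)\frac{V}{|\nabla V|}$. The region $M$ is then foliated by the level sets $\Sigma_s = \{V = s\}$ for $s \ge c$ (no interior critical point can occur: a minimum of $V$ would have $V$-value $< c$, but then $\Sigma$ would not be reachable by flowing outward — one argues that since $V\to\infty$ at infinity and $V\equiv c$ on $\Sigma$, and $V$ has no interior critical point by the asymptotics forcing $c_0 \ge 0$, the flow of $\nabla V/|\nabla V|^2$ is complete toward infinity). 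Using the arclength parameter $t$ with $\partial_t = \nabla V/|\nabla V|$, the metric takes the warped product form $g = dt^2 + \xi(t)^2 h_\Sigma$ where the warping function satisfies an ODE coming from $\nabla^2 V = Vg$: writing $V = V(t)$, we get $V'' = V$, hence $V(t) = a e^t + b e^{-t}$, and $\xi'/\xi = V'/V'$... more carefully, the second fundamental form evolution gives $\xi''/\xi = 1$ (from the radial part of $\nabla^2 V = Vg$ combined with $|\nabla V|^2 = V^2 + c_0$), so $\xi(t) \in \{\sinh t, e^t, \cosh t\}$ according to the sign of $c_0 \in \{+,0,-\}$ after normalization, and matching $\Ric_g + (n-1)g$ to the reference shows $\Ric_{h_\Sigma} = k(n-2)h_\Sigma$ so $(N,h) = (\Sigma, h_\Sigma)$ and the type is $k$.

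For Item~\eqref{item:CMC}, I would show that $H_\Sigma = n-1$ together with $\nu(V) > 0$ on $\Sigma$ forces $\Sigma$ to be a level set of $V$ with the right geometry. Since $\nu(V) = |\nabla V| > 0$ on $\Sigma$ (taking $\nu = \nabla V/|\nabla V|$, which is consistent with $\nu$ pointing to infinity because $V$ is increasing toward infinity), the tangential derivative computation $e_\alpha(V) = $ (tangential part of $\nabla V$) — actually $V$ need not a priori be constant on $\Sigma$; but the umbilicity computation gives $A_\Sigma = \frac{V}{|\nabla V|}g^\intercal + (\text{correction from tangential }\nabla V)$, and $H_\Sigma = n-1$ forces $\frac{(n-1)V}{|\nabla V|} = n-1$ at points where $\nabla V \perp \Sigma$, i.e.\ $|\nabla V| = V$ there, hence $c_0 = |\nabla V|^2 - V^2 = 0$. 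Since $c_0 = 0$ globally, the level sets are $\{V = e^t\}$ and we are in the $k=0$ case; then one checks $\nabla V$ is everywhere normal to $\Sigma$ (so $V$ is constant on $\Sigma$) — this follows because $c_0 = 0$ means $V$ has no critical point, the flow is a diffeomorphism $[c,\infty)\times\Sigma \to M$, and $\Sigma$ must coincide with a single level set since $H_\Sigma$ is constant and the level sets are the only CMC-$(n-1)$ umbilic hypersurfaces in this warped product. Then Item~\eqref{item:split} applies.

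The main obstacle I anticipate is Item~\eqref{item:CMC}: extracting from the two scalar conditions $H_\Sigma = n-1$ and $\nu(V)>0$ the conclusion that $V$ is \emph{constant} on $\Sigma$, which is what lets us invoke Item~\eqref{item:split}. The delicate point is ruling out the possibility that $\nabla V$ has a nonzero tangential component along $\Sigma$; this requires carefully relating the second fundamental form of $\Sigma$ to $\nabla^2 V$ via $A_\Sigma(X,Y) = \langle \nabla_X \nu, Y\rangle$ with $\nu = \nabla V/|\nabla V|$ only when $\nabla V$ is normal, and handling the general case requires showing that the constancy of $H_\Sigma$ propagates. I expect one shows $c_0 = 0$ first (forcing $k=0$ and hence the product structure on the complement of $\Sigma$), and \emph{then} uses that in the $k=0$ warped product $dt^2 + e^{2t}h$ the only connected closed hypersurface with $H = n-1$ that bounds an ALH end is a slice $\{t = \text{const}\}$, which gives $V$ constant on $\Sigma$ a posteriori.
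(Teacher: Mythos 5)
Items \eqref{item:complete} and \eqref{item:split} you handle essentially as the paper does, by reducing to \cite{Tashiro:1965uj,Kanai:1983ut} and to \cite{Galloway:2020to}, so the real content is Item \eqref{item:CMC}, and there your argument has a genuine gap at its central step. You claim that at a point of $\Sigma$ where $\nabla V \perp \Sigma$ one has $A_\Sigma = \frac{V}{|\nabla V|} g^\intercal$, so that $H_\Sigma = n-1$ forces $|\nabla V| = V$ there and hence $c_0 := |\nabla V|^2 - V^2 = 0$. This is not correct: for $X, Y$ tangent to $\Sigma$ one has $\nabla^2 V(X,Y) = \nabla^2_\Sigma (V|_\Sigma)(X,Y) + A_\Sigma(X,Y)\,\nu(V)$, so $A_\Sigma = \big(V g^\intercal - \nabla^2_\Sigma V\big)/\nu(V)$. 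The correction involves the full tangential \emph{Hessian} of $V|_\Sigma$, not merely its gradient, and it does not vanish at a point where the tangential gradient vanishes; at such a point you only control its sign. Tracing the displayed identity gives $\Delta_\Sigma V = (n-1)\big(V - \nu(V)\big)$, so at a maximum of $V|_\Sigma$ you get $\nu(V) \ge V$ (hence $c_0 \ge 0$, since $|\nabla V| = \nu(V)$ there), and at a minimum you get $\nu(V) \le V$ (hence $c_0 \le 0$); you need \emph{both} critical points to conclude $c_0 = 0$, and your one-sided argument as written does not close. A second, related gap is your final step: you want to conclude that $V$ is constant on $\Sigma$ by invoking uniqueness of CMC-$(n-1)$ hypersurfaces ``in this warped product,'' but the warped-product structure of Item \eqref{item:split} is only available on the exterior of a level set of $V$, which does not contain $\Sigma$ unless $\Sigma$ already is a level set — so this is circular as stated. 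A clean repair within your framework: once $c_0 = 0$, combine the pointwise bound $\nu(V) \le |\nabla V| = V$ with the integrated identity $\int_\Sigma \nu(V)\, d\sigma = \int_\Sigma V\, d\sigma$ (from integrating $\Delta_\Sigma V = (n-1)(V - \nu(V))$ over the closed $\Sigma$) to force $\nu(V) \equiv V = |\nabla V|$ on $\Sigma$, hence $\nabla V$ is everywhere normal and $V$ is constant on $\Sigma$.

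For comparison, the paper proves Item \eqref{item:CMC} by a different mechanism that avoids these issues: it sets $a = \max_\Sigma V$, notes that $V^{-1}(a)$ is a regular hypersurface $\Sigma_a$ on which $V$ is constant by construction, applies Item \eqref{item:split} to the region outside $\Sigma_a$ to read off the constant mean curvature of $\Sigma_a$ ($>n-1$, $=n-1$, $<n-1$ according as $k=1,0,-1$), and then compares with $H_\Sigma = n-1$ at the interior tangency point via the maximum principle. This kills $k=1$ at once and forces $\Sigma_a = \Sigma$ when $k=0$; the case $k=-1$ requires an additional inward-flow argument showing the collar between $\Sigma$ and $\Sigma_a$ embeds in the type $k=-1$ reference manifold, which is foliated by hypersurfaces of mean curvature $< n-1$ and so cannot contain $\Sigma$. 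Your level-set/first-integral strategy is viable and arguably more self-contained once the two gaps above are fixed, but as written the key inequality $c_0=0$ and the constancy of $V$ on $\Sigma$ are not established.
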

\begin{proof}
Item~\eqref{item:complete} is from \cite{Tashiro:1965uj,Kanai:1983ut}. We just note how to exclude the cases $k=0, -1$.  A complete manifold without boundary in the case $k=0$ has a cuspidal end,  and in the case $k=-1$ has \emph{two} ALH ends, both are excluded by our assumption that $(M, g)$ has one ALH end.

Item~\eqref{item:split} is obtained in \cite[Proposition 4.1]{Galloway:2020to}. We discuss how Item~\eqref{item:CMC} can be obtained from their argument and  Item~\eqref{item:split}.  It is  shown that $V\to \infty$ as $t\to \infty$ and $V$ has no interior critical point in~\cite[Proposition 4.1]{Galloway:2020to}. Let $a= \max_\Sigma V$. Since $V$ has no interior critical point, together with the assumption that $\nu(V)>0$, we see that $V^{-1}(a)$ is a regular hypersurface. Let $\Sigma_a$ be the part of $V^{-1}(a)$ that is homologous to $\Sigma$, and let $\Omega_a$ be the open set bounded between $\Sigma_a$ and $\Sigma$.  (Here we use that $V\to \infty$ so $\Sigma$ must be separated from the infinity by $V^{-1}(a)$.) We can apply Item~\eqref{item:split} to $(M\setminus \Omega_a, g)$. In particular, its boundary $\Sigma_a$ must have constant mean curvature~$H_{\Sigma_a}$ (see Example~\ref{ex:CMC}), which according to the type~$k$,  
\[
	H_{\Sigma_a}  \left\{  \begin{array}{ll} > n- 1 & \mbox{ for } k=1\\ 
	= n-1 &\mbox{ for } k=0\\
	< n-1 &\mbox{ for } k=-1\end{array} \right..
\]
Because $\Sigma_a$ is tangent to $\Sigma$ from outside, by maximum principle, the mean curvature 
\[
H_{\Sigma_a}\le  H_\Sigma=n-1
\] with equality if and only if $\Sigma_a$ is identical to $\Sigma$. That excludes the case $k=1$. For the case $k=0$, we see that ${\Sigma_a} = \Sigma$, so the conclusion follows. For the case $k=-1$, we can apply the flow argument~\cite[Proposition 4.1]{Galloway:2020to} on $\Sigma_a$ into $\Omega_a$ and the same argument show that $\Omega_a$ is isometric to a subset of the reference manifold $(\M, \mb)$ of type $k=-1$, but that reference manifold cannot contain a closed hypersurface $\Sigma$ with mean curvature $n-1$ because it is foliated by hypersurfaces of constant mean curvature $<n-1$. We then exclude the case $k=-1$. 
\end{proof} 

\begin{corollary}[Static uniqueness for zero mass] \label{corollary:static}
Let $(\M, \mb)$ be a reference manifold with the conformal infinity $(N, h)$ of type $k$. Let $(M, g)$ be ALH with respect to $(\M, \mb)$. Suppose $(M, g)$ is static with a static potential $V>0$ in $M$ and with $m(g, V)=0$. In the case that $M$ has nonempty boundary $\Sigma$, we assume\footnote{See Remark~\ref{remark:positive} for the motivation of the boundary assumption.} that $\Sigma$ is umbilic with mean curvature $(n-1)$ and that $V = \nu(V)$ on $\Sigma$. Then we must have $k=1, 0$, and $(M, g)$ is characterized as follows:
\begin{enumerate}
\item $k=1$: $(M, g)$ is the standard hyperbolic space without  boundary.  
\item  $k=0$: $(M, g)$ is isometric to a Birmingham-Kottler manifold  $\left([1,\infty)\times N, r^{-2} dr^2 + r^2 h\right)$ whose conformal infinity $(N, h)$ is Ricci flat. (In particular, $g$ itself is Poincar\'e-Einstein.)
\end{enumerate}
\end{corollary}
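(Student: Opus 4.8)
The plan is to combine the mass formula identity \eqref{eq:mass formula} with Lemma~\ref{lemma:boundary} and Lemma~\ref{lemma:Obata}. First I would observe that by hypothesis $(M,g)$ is static with a static potential $V>0$ in $M$ and $m(g,V)=0$; when $\Sigma$ is nonempty, the boundary conditions assumed (umbilic with $A_g = g^\intercal$, equivalently $H_\Sigma = n-1$ together with umbilicity, and $V=\nu(V)$) are precisely those in Lemma~\ref{lemma:boundary}. So Lemma~\ref{lemma:boundary} applies and yields $\nabla^2 V - Vg = 0$ in $M$ (in the boundaryless case this follows directly from \eqref{eq:mass formula} with the boundary term absent, since the left side is a nonnegative integral of $V^{-1}|\nabla^2 V - Vg|^2$ and the right side is $-\tfrac{n-2}{2}m(g,V) = 0$). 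This is the ``Obata equation'' $\nabla^2 V = Vg$ with $V>0$.

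Next I would invoke Lemma~\ref{lemma:Obata} to classify $(M,g)$. In the boundaryless case, Item~\eqref{item:complete} gives $k=1$ and $(M,g)$ isometric to standard hyperbolic space, which is conclusion (1). In the case with nonempty boundary, the assumptions $H_\Sigma = n-1$ and $V=\nu(V)>0$ on $\Sigma$ (positivity of $\nu(V)$ on $\Sigma$ must be checked — it follows since $V>0$ on $\Sigma$ and $V = \nu(V)$ there) put us in the setting of Item~\eqref{item:CMC} of Lemma~\ref{lemma:Obata}. That lemma then forces $k=0$, shows $V$ is constant on $\Sigma$, and via Item~\eqref{item:split} identifies $(M,g)$ with a warped product $[c,\infty)\times N$ with metric $dt^2 + e^{2t}h$; after the coordinate change $r = e^{t-c}$ (or normalizing $c$) this is exactly $\big([1,\infty)\times N, r^{-2}dr^2 + r^2 h\big)$. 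I should double-check the normalization of the inner boundary to $\{r=1\}$, which just amounts to shifting $t$.

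It remains to argue that in case $k=0$ the conformal infinity $(N,h)$ is Ricci flat and $g$ is Poincar\'e-Einstein. Since $g = dt^2 + e^{2t}h$ on $[c,\infty)\times N$, a direct curvature computation for this warped product shows $\Ric_g = -(n-1)g + e^{-2t}\big(\Ric_h - (n-2)\cdot 0\cdot h\big)$-type term; more precisely, one computes that $\Ric_g + (n-1)g$ has only the $N$-tangential block, equal to $\Ric_h$. But $(M,g)$ is static with scalar curvature $-n(n-1)$, hence Einstein-like constraints apply: from the static system \eqref{eq:static-system} and $\nabla^2 V = Vg$ one gets $V\Ric_g = \nabla^2 V - Vng = -Vng + Vg\cdot$(correction), forcing $\Ric_g = -(n-1)g$, i.e. $g$ is Poincar\'e-Einstein; then the warped product curvature identity forces $\Ric_h = 0$. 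Thus $(N,h)$ is Ricci flat and the boundary has mean curvature $H_g = n-1$.

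The main obstacle I anticipate is not any single deep step but rather the bookkeeping in the $k=0$ case: verifying carefully that the boundary conditions feeding Lemma~\ref{lemma:boundary} (umbilic, $H_\Sigma=n-1$, $V=\nu(V)$) are consistent with and actually produce the hypotheses of Lemma~\ref{lemma:Obata}\eqref{item:CMC} — in particular that $\nu(V)>0$ on $\Sigma$, which needs $V>0$ on $\Sigma$ (a consequence of $V>0$ in $M$ by hypothesis, or of the strong maximum principle) — and then extracting the Ricci-flatness of $(N,h)$ and the Einstein property of $g$ from the warped product structure together with staticity. These are all routine once the structure is in place, but they are where a careless proof could slip. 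I would present the argument in the order: (i) apply Lemma~\ref{lemma:boundary} (or \eqref{eq:mass formula} directly if $\partial M=\emptyset$) to get $\nabla^2 V = Vg$; (ii) apply Lemma~\ref{lemma:Obata}\eqref{item:complete} or \eqref{item:CMC}+\eqref{item:split} to classify $(M,g)$; (iii) in the $k=0$ case, deduce from staticity and the warped product form that $g$ is Poincar\'e-Einstein with $(N,h)$ Ricci flat and $H_g = n-1$.
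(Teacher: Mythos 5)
Your proposal is correct and follows essentially the same route as the paper: apply \eqref{eq:mass formula} (boundaryless case) or Lemma~\ref{lemma:boundary} (boundary case) to obtain $\nabla^2 V = Vg$, note that combined with the static equation this forces $\Ric_g=-(n-1)g$, and then invoke Lemma~\ref{lemma:Obata} to classify $(M,g)$. Your extra checks (that $\nu(V)=V>0$ on $\Sigma$ so Lemma~\ref{lemma:Obata}\eqref{item:CMC} applies, and the warped-product computation giving $\Ric_h=0$) are correct and merely make explicit what the paper leaves implicit.
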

\begin{proof}
By \eqref{eq:mass formula} (if $M$ is boundaryless) or Lemma~\ref{lemma:boundary} (if $M$ has boundary), the positive static potential satisfies $\nabla^2 V- Vg=0$, which also implies that $g$ is Poincar\'e-Einstein by the static equation. The conclusion follows from Lemma~\ref{lemma:Obata}. 
\end{proof}

We now prove Theorem~\ref{thm:uniqueness}. Theorem~\ref{thm:rigidity1} and Theorem~\ref{thm:rigidity0} are direct consequences as discussed in Introduction. 

\begin{proof}[Proof of Theorem~\ref{thm:uniqueness}]
By Theorem~\ref{thm:positive}, $(M, g)$ is static with a static potential $V>0$ in $M$, and in the case that $M$ has nonempty boundary $\Sigma$, we also have that $\Sigma$ is umbilic with mean curvature $n-1$ and $V=\nu(V)$ on $\Sigma$. %{\color{red} Moreover, by Lemma~\ref{lemma:boundary} and~\eqref{eq:mass formula}, $m(g,V)\le 0$, therefore, $m(g,V)=0$ by the corresponding positive mass theorem.
The conclusion then follows from  Corollary~\ref{corollary:static}.
\end{proof}

\subsection{Miscellaneous results}

We include results of independent interest. They are not used elsewhere in the paper. 

\subsubsection{Bartnik mass minimizers}\label{sec:Bartnik}

Given a  static, reference manifold $(\M, \mb)$ with a static potential $V_0$ and a compact manifold $(\Omega, g_0)$ with boundary $\Sigma$, we define the (hyperbolic) Bartnik mass with respect to $(\M, \mb)$ to be
\[
	m_B(\Omega, g_0) = \inf \big\{ m(g, V_0):  (M, g) \mbox{ is admissible}\big \},
\]
where  $(M, g)$ is said to be \emph{admissible} if the following holds
\begin{itemize}
\item $(M, g)$ is ALH with respect to $(\M, \mb)$ and has scalar curvature $R_g\ge -n(n-1)$. 
\item The boundary $\partial M$ is diffeomorphic to $\Sigma$, and the Bartnik boundary data match along $\partial M\cong \Sigma$:
\[
	g^\intercal=g_0^\intercal, \quad  H_g = H_{g_0}. 
\]
\item $(M, g)$ satisfies a non-degeneracy condition $\mathscr{N}$ that is ``open'' among small deformations of~$g$. As an example,  the condition $\mathscr{N}$ can say that $\partial M$ is strictly outward-minimizing in $(M, g)$, in the sense that it has volume strictly less than any hypersurface enclosing it. 
\end{itemize}
We also refer the definition to \cite[Section 3]{Martin:2018vp} and \cite{Cabrera-Pacheco:2018wc}. We say that an admissible $(M, g)$ is a \emph{Bartnik mass minimizer} of $(\Omega, g_0)$ if $m(g, V_0) = m_B(\Omega, g_0)$. 
As a direct consequence of Theorem~\ref{thm:static1}, we see that a Bartnik mass minimizer must be static. A different proof with a slightly different conclusion has been given by   {\cite[Theorem 3.0.1]{Martin:2018vp}}.
\begin{corollary}[Cf. {\cite[Theorem 3.0.1]{Martin:2018vp}}]
Let $(\M, \mb)$ and $(\Omega, g_0)$ be as above. If $(M, g)$ is a \emph{Bartnik mass minimizer} of $(\Omega, g_0)$, then $(M, g)$ is static with a static potential $V$ satisfying $V -V_0=O(r^{-d})$ for some number $d>0$.
\end{corollary}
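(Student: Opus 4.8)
The plan is to deduce the statement directly from Theorem~\ref{thm:static1} by checking that the Bartnik-minimizing property forces the hypothesis $(\star)$. First I would record what admissibility of $(M,g)$ already gives us: it is ALH with respect to $(\M,\mb)$ with nonempty boundary $\Sigma=\partial M$, it has scalar curvature $R_g\ge -n(n-1)$, it matches the Bartnik boundary data $(g_0^\intercal,H_{g_0})$ along $\partial M\cong\Sigma$, and it satisfies the non-degeneracy condition $\mathscr{N}$. Since by assumption $m(g,V_0)=m_B(\Omega,g_0)$, the metric $g$ realizes the infimum defining the Bartnik mass.

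The key step is to build a neighborhood $\mathcal{U}_\star$ as in \eqref{eq:neighborhood} on which the minimization hypothesis $(\star)$ holds. Using that $\mathscr{N}$ is open among small deformations of $g$, I would choose a small open neighborhood $\mathcal{U}\subset g+\C^{2,\alpha}_{-q}(M)$ of $g$ consisting only of Riemannian metrics, each of which still satisfies $\mathscr{N}$, and set $\mathcal{U}_\star=\{\gamma\in\mathcal{U}:(\gamma^\intercal,H_\gamma)=(g^\intercal,H_g)\text{ on }\Sigma\}$. Now let $\gamma\in\mathcal{U}_\star$ with $R_\gamma=R_g$ in $M$; I claim $\gamma$ is admissible. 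Indeed, $\gamma$ is ALH with respect to $(\M,\mb)$: the condition $\gamma_{ij}-\mb_{ij}\in\C^{2,\alpha}_{-q}$ follows from $g_{ij}-\mb_{ij}\in\C^{2,\alpha}_{-q}$ together with $\gamma-g\in\C^{2,\alpha}_{-q}(M)$, and $r(R_\gamma+n(n-1))=r(R_g+n(n-1))\in L^1(M\setminus K)$ because $R_\gamma=R_g$. Moreover $R_\gamma=R_g\ge -n(n-1)$; the Bartnik boundary data of $\gamma$ equal those of $g$, hence equal $(g_0^\intercal,H_{g_0})$; and $\gamma$ satisfies $\mathscr{N}$ by the choice of $\mathcal{U}$. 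Therefore $m(\gamma,V_0)\ge m_B(\Omega,g_0)=m(g,V_0)$, which is exactly assumption $(\star)$ for this $\mathcal{U}_\star$.

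With $(\star)$ verified, I would apply Theorem~\ref{thm:static1}, which yields that $(M,g)$ is static with a static potential $V$ satisfying $V-V_0=O(r^{-d})$ for some $d>0$, completing the proof. The only delicate point, and it is a minor one, is the stability of admissibility under small deformations within $\mathcal{U}_\star$ that leave the scalar curvature unchanged; this is precisely where the openness of $\mathscr{N}$ is used, and it is the reason the definition of admissibility builds that openness into the condition $\mathscr{N}$ in the first place.
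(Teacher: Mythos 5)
Your proposal is correct and follows exactly the route the paper intends: the paper presents this corollary as a direct consequence of Theorem~\ref{thm:static1}, and your verification that the Bartnik-minimizing property yields hypothesis $(\star)$ — using that competitors in $\mathcal{U}_\star$ with $R_\gamma=R_g$ remain admissible thanks to the matched Bartnik data, the preserved scalar curvature bound and $L^1$ condition, and the openness of $\mathscr{N}$ — is precisely the (omitted) detail needed. Nothing to add.
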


\subsubsection{Static uniqueness}
We have seen in Example~\ref{ex:CMC} that the reference manifolds are foliated by hypersurfaces of constant mean curvature. We shall see that those hypersurfaces have further global properties:
\begin{itemize}
\item $k = 0$:  Since $(\M, \mb)$ is foliated by hypersurfaces with constant mean curvature $(n-1)$, the ``exterior region'' $[r, \infty)$ relative to $S_r$  cannot contain any closed hypersurface  with mean curvature strictly less than $(n-1)$. %Therefore, each $\Sigma_r$ is \emph{weakly outermost of mean curvature $(n-1)$} (see Definition \ref{def:weakly outermost}).
\item  $k = -1$: At $r=1$,  $S_{1}$ is a minimal hypersurface, and the ``exterior'' region $\M= (1,\infty)\times N$ is foliated by hypersurfaces of positive mean curvature and thus cannot contain any other minimal hypersurface. %Therefore, the boundary of $M_{-1}$ is an \emph{outermost minimal hypersurface}.
\end{itemize}
Those global properties are captured by the following ``outermost'' condition. 
\begin{definition}
Let $(M, g)$ be ALH with boundary $\Sigma$ having mean curvature $H_0$.	We say that $\Sigma$ is \emph{locally weakly outermost} if there is a collar neighborhood $U$ of $\Sigma$ such that there are no hypersurfaces in $U$ homologous to $\Sigma$ with mean curvature $<H_0$.
\end{definition}

A similar result by a similar proof  is obtained Chru\'sciel, Galloway, and Potaux in \cite[Theorem VI.7]{Chrusciel:2020wg} with the difference is that the ``locally weakly outermost'' condition here is replaced by their assumption that  $\Sigma$ is a (strongly) stable CMC hypersurface with mean curvature $n-1$ and that $V$ is constant on $\Sigma$. The point of this proposition is to find the boundary conditions entirely on $\Sigma$ but not on the static potential $V$.

\begin{proposition}
Let $(M,g)$ be ALH with boundary $\Sigma$. Suppose that $(M, g)$ is static with a static potential $V$ satisfying $V>0 $ in $\Int M$ and has the mass integral $m(g, V)=0$. Suppose the following boundary conditions hold:
\begin{align} \label{eq:BC}
\begin{split}
	&\mbox{$\Sigma$ has the mean curvature $H_g \le n-1$,  is locally weakly outermost, and}\\
	&\mbox{does not admit a metric of positive scalar curvature}.
\end{split}
\end{align}
%\begin{enumerate}
%	\item $\Sigma$ has the mean curvature $H_g \le n-1$ and $\Sigma$ is locally weakly outermost.\label{item:outermost}
%	\item $\Sigma$ does not admit a metric of positive scalar curvature.\label{item:psc}
%\end{enumerate}
Then the induced metric $h:=g^\intercal$ on $\Sigma$ is Ricci flat, and $(M,g)$ is isometric to $([1,\infty)\times \Sigma, dt^2 + e^{2t}h )$.
\end{proposition}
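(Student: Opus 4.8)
The plan is to combine the structure theory of static manifolds admitting a positive solution of the Obata-type equation with the hypotheses \eqref{eq:BC}, following the pattern of Lemma~\ref{lemma:boundary}, Lemma~\ref{lemma:Obata}, and the Shen--Wang identity \eqref{eq:mass formula}. First I would run the computation of Lemma~\ref{lemma:boundary} in the present, weaker setting. Since $(M,g)$ is static with $V>0$ in $\Int M$, the trace of the static equation gives $\Delta_g V - nV = 0$ in $M$; restricting to $\Sigma$ and using the Gauss-type splitting $\Delta_g V = \Delta_\Sigma V + \nabla^2 V(\nu,\nu) + H_g\,\nu(V)$ together with $\nabla^2 V = V(\Ric_g + ng)$, one gets on $\Sigma$
\[
	\Delta_\Sigma V + V\big(\Ric_g(\nu,\nu) + (n-1)\big) + (H_g - (n-1))\,\nu(V) = 0.
\]
The key point is then to show $\nu(V)\ge 0$ on $\Sigma$: this should follow because $\Sigma$ is locally weakly outermost and $V$ flows hypersurfaces outward via Galloway's monotonicity formula \cite{Galloway:1993tu} (as in the proof of Lemma~\ref{lem:nonnegative static potential}), so if $\nu(V)<0$ somewhere one produces, via the normal variation with speed $V$, a nearby hypersurface homologous to $\Sigma$ with mean curvature $< H_g \le n-1$, contradicting \eqref{eq:BC}. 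With $\nu(V)\ge 0$ and $H_g\le n-1$, integrating the displayed identity over $\Sigma$ and discarding the manifestly signed term yields $\int_\Sigma V\big(\Ric_g(\nu,\nu)+(n-1)\big)\,d\sigma_g \le 0$, hence the boundary integral in \eqref{eq:mass formula} satisfies $\int_\Sigma (\Ric_g + (n-1)g)(\nabla V,\nu)\,d\sigma_g$ has the favorable sign once one also controls the tangential components $\Ric_g(e_\alpha,\nu)$.

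For the tangential Ricci terms I would argue more carefully than in Lemma~\ref{lemma:boundary}, since we no longer assume $V=\nu(V)$ on $\Sigma$. Here the second Bianchi identity / the traced Codazzi equation on the hypersurface, combined with the static equation, should force the relevant divergence structure; alternatively, one can note that the Shen--Wang identity \eqref{eq:mass formula} is derived by integrating $V^{-1}|\nabla^2 V - Vg|^2$ against the full divergence, and the boundary term that actually appears only pairs $\nabla V$ with $\nu$ in the combination above. Feeding $m(g,V)=0$ and the sign of the boundary integral into \eqref{eq:mass formula} gives $\int_M V^{-1}|\nabla^2 V - Vg|^2\,d\mu_g \le 0$, hence $\nabla^2 V = Vg$ throughout $M$; the static equation then forces $\Ric_g = -(n-1)g$, so $g$ is Poincaré--Einstein. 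Moreover the boundary integral must vanish, so by strong maximum principle applied to $\Delta_\Sigma V + V(\Ric_g(\nu,\nu)+(n-1)) + (H_g-(n-1))\nu(V)=0$ one deduces $H_g = n-1$ on $\Sigma$ and $\nu(V)>0$ (if $\nu(V)$ vanished at a point together with $V>0$, the normal geodesic argument of Theorem~\ref{thm:positive}\eqref{item:positive} would propagate a contradiction).

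At this stage $(M,g)$ is ALH with respect to $(\M,\mb)$, carries $V>0$ with $\nabla^2 V = Vg$, and has boundary $\Sigma$ with $H_g = n-1$ and $\nu(V)>0$; this is exactly the hypothesis of Lemma~\ref{lemma:Obata}\eqref{item:CMC}, which yields $k=0$, $V$ constant on $\Sigma$, and $(M,g)\cong\big([c,\infty)\times N,\ dt^2 + e^{2t}h\big)$ with $\Sigma\cong\{c\}\times N$, i.e.\ $N=\Sigma$ and $h = g^\intercal$. Finally, $g$ Poincaré--Einstein with $k=0$ forces $\Ric_h = k(n-2)h = 0$, so $h$ is Ricci flat; rescaling $t$ so that $c=0$ (equivalently reparametrizing by $r=e^t$) gives the stated normal form $([1,\infty)\times\Sigma,\ r^{-2}dr^2 + r^2 h) = (\,[1,\infty)\times\Sigma,\ dt^2+e^{2t}h\,)$. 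The role of the ``does not admit a metric of positive scalar curvature'' hypothesis in \eqref{eq:BC} is to rule out the alternative in the outermost/stability dichotomy where $\Sigma$ could be a round or positively-curved cross-section; concretely, Ricci-flatness of $h$ is consistent with this, while a cross-section of positive scalar curvature would be excluded. The main obstacle I anticipate is establishing $\nu(V)\ge 0$ on $\Sigma$ rigorously from ``locally weakly outermost'' alone (without a stability hypothesis on $\Sigma$): one must set up Galloway's monotonicity for the $V$-speed normal flow and check that the flowed hypersurfaces indeed violate the outermost condition, handling the case where $\Sigma$ itself is not minimal but only has $H_g\le n-1$.
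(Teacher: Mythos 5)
Your overall skeleton (sign the boundary term in \eqref{eq:mass formula}, deduce $\nabla^2V=Vg$, finish with Lemma~\ref{lemma:Obata}) matches the paper's, but the way you try to control the boundary integral has a genuine gap, and it is precisely the step the paper outsources to a nontrivial external rigidity theorem. The paper's proof invokes \cite[Theorem 1.3]{Galloway:2020to}: the hypotheses \eqref{eq:BC} (including, essentially, that $\Sigma$ admits no metric of positive scalar curvature) force a collar neighborhood of $\Sigma$ to split isometrically as $([0,\varepsilon)\times\Sigma,\,dt^2+e^{2t}h)$ with $h$ Ricci flat; hence $\Ric_g+(n-1)g\equiv 0$ near $\Sigma$ and the boundary integral in \eqref{eq:mass formula} vanishes identically. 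Then $\nabla^2V=Vg$, and \cite[Theorem 1.5]{Galloway:2020to} gives $V$ constant on $\Sigma$, so Lemma~\ref{lemma:Obata}\,(2) applies. Your proposal never uses the non-PSC hypothesis in any load-bearing way, which is a strong signal something is missing: without it the local splitting (and the conclusion itself) fails.

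Concretely, two steps of your argument do not go through. First, the claim that $\nu(V)<0$ somewhere would let the $V$-speed normal flow produce a hypersurface with mean curvature $<n-1$ is unsupported: Galloway's monotonicity gives $\partial_t(H_{\Sigma_t}/V)=-|A|^2$, and neither this nor the first variation of $H$ (which is $-\Delta_\Sigma V-(|A|^2+\Ric(\nu,\nu))V$) is controlled by the sign of $\nu(V)$; the locally weakly outermost condition does not directly yield $\nu(V)\ge 0$. Second, even granting $\nu(V)\ge 0$, the quantities do not match up: integrating your trace identity over $\Sigma$ controls $\int_\Sigma V\big(\Ric(\nu,\nu)+(n-1)\big)\,d\sigma$, whereas the boundary term in \eqref{eq:mass formula} is $\int_\Sigma\big[\nu(V)\big(\Ric(\nu,\nu)+(n-1)\big)+\sum_\alpha e_\alpha(V)\Ric(e_\alpha,\nu)\big]\,d\sigma$. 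In Lemma~\ref{lemma:boundary} these coincide only because of the extra hypotheses $V=\nu(V)$ and $A_g=g^\intercal$, which kill the tangential Ricci components and identify $V$ with $\nu(V)$; here you have neither, and the appeal to ``Bianchi/Codazzi'' is not carried out. Finally, your passage to Lemma~\ref{lemma:Obata}\,(3) needs $H_g=n-1$ and $\nu(V)>0$ on $\Sigma$, which you assert via an unspecified maximum principle rather than derive. To repair the proof along the paper's lines you should import the local splitting theorem for weakly outermost boundaries in static ALH manifolds rather than attempt a direct sign analysis of the boundary integral.
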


\begin{proof}
%We first recall the following known identity related to the static potential and the mass used in \cite[Proposition 2.10]{Huang:2020tm} and \cite[Section III]{Chrusciel:2020wg}):

%\begin{equation}\label{eq:mass formula}
%	\int_{M} f^{-1} |\nabla^2 f-fg|^2\, dV_g
%	=-\frac{n-2}{2}m(g)-\int_{\Sigma} \big(\Ric  + (n-1) g \big) (\nabla f, \nu) \, d\sigma_g
%\end{equation}
%where $\nu$ is the unit normal vector to $\Sigma$ in $(M,g)$ pointing toward infinity. By using this, we show that the %static potential $f$ satisfies $\nabla^2 f=fg$; it suffices to show that
%\[
%	\int_{\Sigma} \big(\Ric  + (n-1) g \big) (\nabla f, \nu) \, d\sigma_g=0.
%\]

It is shown by Galloway and the second author in \cite[Theorem 1.3]{Galloway:2020to} that \eqref{eq:BC} implies that there is a collar neighborhood $U$ of $\Sigma$ such that $(U ,g|_U)$ is isometric to the warped product $\left([0,\varepsilon)\times\Sigma,dt^2+e^{2t}h\right)$ and $(\Sigma, h)$ is Ricci flat. From that, we can compute $(U, g|_U)$ has constant Ricci curvature $\Ric = (n-1) g$. Therefore, the boundary integral in \eqref{eq:mass formula} vanishes
\[
\int_{\Sigma} \big(\Ric  + (n-1) g \big) (\nabla V, \nu) \, d\sigma_g=\int_{\Sigma}  \nu(V) \big(\Ric  + (n-1) g \big) (\nu, \nu)\, d\sigma_g=0.
\]
Together with the assumption $m(g, V)=0$,  we see that $V$ satisfies $\nabla^2 V= Vg$ in $M$. The local splitting of $(U, g|_U)$ and that $\nabla^2 V= Vg$ in $U$ also imply $V=$ constant on $\Sigma$, proven in  \cite[Theorem 1.5]{Galloway:2020to}. The proof follows by Lemma~\ref{lemma:Obata}.
\end{proof}

\appendix

\section{An alternative proof to Anderson-Jauregui's scalar curvature surjectivity}\label{sec:af}

In this section we discuss Theorem~\ref{thm:AF}. All the notations in this section are self-contained, and should not be confused with the rest of the paper. (The weighted H\"older spaces here are for asymptotically flat manifolds, whose definition is referred to, e.g. \cite[Section 2]{Eichmair-Huang-Lee-Schoen:2016}.)  Let $n\ge 3$, $s\in (\frac{n-2}{2}, n-2)$ and let $(M, g)$ be an $n$-dimensional asymptotically flat manifold. We define the Banach space of symmetric $(0,2)$-tensors by
\[
	\mathcal{S}=\big\{ h\in \C^{2,\alpha}_{-s}(M): h^\intercal=0 \mbox{ and } DH|_g(h) = 0 \mbox{ on } \Sigma\big\}.
\] 
We recall the formulas for the linearized scalar curvature $DR|_g$ and linearized mean curvature $DH|_g$:
\begin{align*}
DR|_g(h)&=-\Delta_g \tr _g h + \mathrm{div}_g \mathrm{div}_g h -  h \cdot \Ric_g\\
DH|_g(h)&= \tfrac{1}{2} \nu(\tr \, h^\intercal) - \mathrm{div}_\Sigma \omega - \tfrac{1}{2} h(\nu, \nu) H_g,
\end{align*}
where  $\nu$ is the unit normal on $\Sigma$ pointing to infinity, $\omega$ is the one-form on the tangent bundle of $\Sigma$ defined by $\omega(\cdot) = h(\nu, \cdot)$.

 The following theorem  is obtained by Anderson and Jauregui for $n=3$ in \cite[Proposition 2.4]{Anderson:2019tm}
 \begin{manualtheorem}{\ref{thm:AF}}
Let $(M, g)$ be an $n$-dimensional asymptotically flat manifold.  Then the linearized scalar curvature map $DR|_g: \mathcal{S} \to \C^{0,\alpha}_{-2-s}(M)$ is surjective. As a consequence, the map 
\[
h\in  \C^{2,\alpha}_{-s}(M) \longmapsto \big(DR|_{g} (h), h^\intercal, DH|_{g}(h)\big) \in \C^{0,\alpha}_{-2-s}(M) \times \C^{2,\alpha}(\Sigma)\times \C^{1,\alpha}(\Sigma)
\] is surjective. \end{manualtheorem}

The key to prove the above theorem is to show that the map $DR|_g$ has closed range, as stated in the following lemma. An alternative proof of the lemma is given by Zhongshan An  \cite[Section 2.1]{An:2021uv} (stated for $n=3$).  We thank her for explaining her proof to us. It appears that both proofs  introduce an auxiliary scalar function to obtain a one-dimensional higher warped product with $h$ in a ``spacetime.'' Their proofs consider the linearized Einstein tensor on the spacetime (so that the equation for $DR|_g$ appears as the ``time'' component) and use the result of Anderson and Khuri on ellipticity of the system under some subtle gauge conditions~\cite[Lemma 3.2]{Anderson:2013vx}.  Here we provide an elementary and self-contained proof for general $n\ge 3$ that analyzes the operator $DR|_g$ directly. 

\begin{lemma}
Let $(M, g)$ be an asymptotically flat manifold with compact boundary $\Sigma$. Then the map $DR|_g : \mathcal{S}\to \C^{0,\alpha}_{-2-s}(M)$ has  finite-dimensional cokernel, and hence it has closed range.
\end{lemma}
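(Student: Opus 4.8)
The plan is to show that $DR|_g : \mathcal{S}\to \C^{0,\alpha}_{-2-s}(M)$ has finite-dimensional cokernel by realizing the formal adjoint $DR|_g^*$ as (part of) an overdetermined-elliptic operator and invoking standard elliptic theory on the asymptotically flat manifold with boundary. Recall that for a scalar function $V$ the formal $L^2$-adjoint is $DR|_g^* V = (-\Delta_g V) g + \nabla^2_g V - V\Ric_g = L_g^* V$. First I would compute, by integration by parts, the boundary identity relating $\int_M \big(V\,DR|_g(h) - h\cdot L_g^* V\big)\,d\mu_g$ to an integral over $\Sigma$ of terms linear in $h$, $\omega$, $h^\intercal$, $\nu(V)$, $V$ — this is exactly the computation already carried out in Lemma~\ref{lemma:functional} and Definition~\ref{def:massfunctional}, together with the boundary term from $S_r$ which vanishes in the limit because $h\in\C^{2,\alpha}_{-s}(M)$ with $s>\frac{n-2}{2}$ and $V$ grows at most linearly. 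Imposing $h^\intercal = 0$ and $DH|_g(h)=0$ on $\Sigma$ (the defining conditions of $\mathcal S$), and choosing the allowed boundary freedom in $h$ — namely $\nu(\tr h^\intercal)$, $h(\nu,\nu)$, and $\omega$ are essentially unconstrained — one sees that the cokernel of $DR|_g$ is represented by those $V\in \C^{0,\alpha}_{-s}(M)^*$-admissible functions for which $L_g^* V = 0$ in $\Int M$ together with the natural (Neumann-type) boundary conditions forced by the pairing, i.e. $V = 0$ and $\nu(V) = 0$ on $\Sigma$.

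The second, and main, step is to prove this space of solutions is finite-dimensional. The operator $L_g^* = DR|_g^*$ is not elliptic as a map on scalars, but the pair $(L_g^* V, \text{something})$ is overdetermined-elliptic: taking the trace of $L_g^* V = 0$ gives $-(n-1)\Delta_g V - V R_g = 0$, a scalar elliptic equation for $V$, so any $V$ in the kernel with the decay $V\to 0$ at infinity (which is what membership in the cokernel forces, since a linearly growing $V$ pairs nontrivially against elements of $\C^{0,\alpha}_{-2-s}(M)$, by the argument in Remark~\ref{remark:dual} adapted to the asymptotically flat weights) solves an elliptic equation with the overdetermined boundary conditions $V = 0$, $\nu(V) = 0$ on $\Sigma$, plus appropriate decay. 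By unique continuation from the boundary (Aronszajn), such a $V$ vanishes in a collar of $\Sigma$ and hence — if $M\setminus\Sigma$ is connected — identically; more carefully, the relevant functional-analytic statement is that $L_g^*$ with these boundary conditions, acting between the appropriate weighted Hölder (or weighted Sobolev) spaces on an asymptotically flat manifold, is semi-Fredholm because the leading symbol of the associated elliptic prolongation is invertible and the asymptotically flat weight $-s$ with $s\in(\frac{n-2}{2},n-2)$ is non-exceptional for the Laplacian. I would cite the standard weighted elliptic theory (e.g. as in \cite{Eichmair-Huang-Lee-Schoen:2016}) to conclude that the kernel of the adjoint problem is finite-dimensional, whence $DR|_g$ has finite-dimensional cokernel and therefore closed range.

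The hard part will be the bookkeeping at the boundary: one must verify that the three boundary functionals that remain available after imposing $h^\intercal = 0$ and $DH|_g(h) = 0$ — together with the bulk surjectivity of $DR|_g$ modulo boundary, which is essentially the interior deformation theory of scalar curvature — really do pin the cokernel down to solutions of $L_g^* V = 0$ satisfying both $V=0$ and $\nu(V)=0$ on $\Sigma$, rather than a mix of Dirichlet and Neumann data on different tensor components. This is a careful but routine linear-algebra-at-the-boundary computation using formula \eqref{eq:mean-curvature} for $DH|_g(h)$ and the decomposition of $h$ into $h^\intercal$, $\omega$, and $h(\nu,\nu)$. A secondary technical point is to confirm that the weighted space $\C^{2,\alpha}_{-s}(M)$ with $s\in(\frac{n-2}{2},n-2)$ makes all the boundary-at-infinity terms in the integration-by-parts vanish and makes the cokernel pairing well-defined; this is where the lower bound $s>\frac{n-2}{2}$ (mass integral convergence) and the upper bound $s<n-2$ (avoiding the exceptional weight for $\Delta$ in dimension $n$) enter, exactly parallel to the role of $q\in(\frac n2,n)$ in the ALH setting.
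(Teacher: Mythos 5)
Your strategy has a genuine gap at its core: you try to deduce that $DR|_g$ has finite-dimensional cokernel (equivalently, closed range) from an analysis of the kernel of the formal adjoint $L_g^*$. But the duality identity $\overline{\range T}={}^{\perp}(\ker T^*)$ only controls the \emph{closure} of the range; showing that every $V$ with $L_g^*V=0$ (plus decay and boundary conditions) vanishes proves at most that the range is dense in a finite-codimensional closed subspace, not that it is closed. To transfer information from the adjoint back to $T=DR|_g|_{\mathcal S}$ one needs the genuine Banach-space adjoint $T^*:(\C^{0,\alpha}_{-2-s}(M))^*\to\mathcal S^*$ to have closed range (Closed Range Theorem), i.e.\ a semi-Fredholm estimate in the \emph{dual} norms. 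Your appeal to ``overdetermined ellipticity of the prolongation of $L_g^*$'' yields estimates for classical solutions in weighted H\"older or Sobolev norms, not for arbitrary functionals on the non-reflexive space $\C^{0,\alpha}_{-2-s}(M)$; and no a priori estimate is available on the primal side either, since $DR|_g$ is underdetermined and not elliptic. The paper resolves this by a structurally different device that is absent from your proposal: it augments $h$ by an auxiliary scalar $v$ and studies the operator $L(v,h)=\big(DR|_g(vg+h),\,\Delta h\big)$ together with a boundary operator $B(v,h)$ whose first two components are the linearized Bartnik data of $vg+h$ (the remaining components being chosen to make the Agmon--Douglis--Nirenberg complementing condition hold). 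Ellipticity of $(L,B)$ makes it Fredholm, and since $\range\big(DR|_g|_{\mathcal S}\big)$ contains the first component of $\range L$, it has finite codimension and is therefore closed. Verifying the complementing (Lopatinski--Shapiro) condition for this augmented system is the actual content of the proof.

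A secondary error: the boundary conditions you assign to the putative cokernel are not what the integration by parts produces. For $h\in\mathcal S$ (so $h^\intercal=0$ and $DH|_g(h)=0$ on $\Sigma$) the boundary integrand reduces to $-V\Div_\Sigma\omega-\omega\big((\nabla V)^\intercal\big)$, whose integral over $\Sigma$ vanishes identically for \emph{every} $V$; no condition $V=0$ or $\nu(V)=0$ on $\Sigma$ is forced. Consistently, the paper's deduction of Theorem~\ref{thm:AF} from the lemma eliminates the adjoint kernel using only the linear growth of asymptotically flat static potentials at infinity, with no boundary condition on $V$. This observation does not rescue your argument, since the obstruction in the first paragraph is independent of which boundary conditions appear.
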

\begin{proof}
 For any scalar function $v$ and  any symmetric $(0,2)$-tensor $h$, we define the differential operator $L$ and the boundary operator $B$ on $(v, h)$ by
\begin{align*}
	\begin{array}{l} L(v, h) =  \left\{ \begin{array}{l} (1-n) \Delta v - v R_g + DR|_g(h) \\ \Delta h \end{array}\right. \quad \mbox{ in }M	\\
B(v, h) = \left\{ \begin{array}{l} v g^\intercal + h^\intercal\\
	-\tfrac{1}{2} v H_g +\tfrac{1}{2} (n-1)\nu(v) + \tfrac{1}{2} \nu(\tr h^\intercal) - \Div_\Sigma \omega- \tfrac{1}{2} h(\nu, \nu) H_g\\
		h(\nu,\nu)\\
(\Div h - d v)^\intercal\end{array}\quad\mbox{ on } \Sigma \right.
		\end{array}.
\end{align*}
We note that the first component in $L(v, h)$ is exactly $DR|_g (vg+ h)$ and the second component in $L$ is the tensor Laplacian on $h$. We also note that the first two components in $B(v, h)$ are exactly the Bartnik boundary data of $vg+h$, while the rest of components in $B(v, h)$ are somewhat arbitrary -- they are chosen so that the fact \eqref{eq:regular} below holds.

We will verify that $L(v, h)$ is elliptic and that the boundary operator $B(v, h)$ satisfies the corresponding complementing boundary condition in the sense of Agmon-Douglis-Nirenberg~\cite{Agmon:1964us}.  Once we verify those, we then have that $L$ is a Fredholm operator on the space of $(v, h)\in \C^{2,\alpha}_{-s}(M)$ subject to the boundary condition $B(v, h)=0$, so $\range L$ is of finite codimension. To complete the proof, we note that $\range DR|_g$ (from the space $\mathcal{S}$) contains the range of the first scalar component of $L$ and hence $\range DR|_g$  is also of finite codimension. 
%\margin{\color{red} Should we say Range $DR|_g$ and the first scalar equation of $L$ are the same, to be more precise?}

Let $\{ e_1, \dots, e_n\}$ be an orthonormal local frame so that $e_n = \nu$ along $\Sigma$. Denote the number 
\[
	N = 1+\frac{n(n-1)}{2} + n.
\]
For the purpose to assign an order for the unknowns $(v, h)$, let $\iota$ be a bijection from the index space $\{ (j,k) : 1\le j < k \le n\}$ to the integers $\{ 2, \dots, N\}$.  With respect to the local  frame, we write $(v, h)$ as $(u_1, \dots, u_N)$ where $u_{\iota(j,k)} = h_{jk}$. We express the $i$-th (scalar) differential equation of  $Lu$ and the $h$-th (scalar) differential equation of  $Bu$ respectively by, for $i, h=1,\dots, N$, 
\begin{align*}
	 (Lu)_i &= \sum_{j=1}^{N} \ell_{ij}(\partial) u_j\\
	 (B u)_h &= \sum_{j=1}^{N} B_{hj}(\partial ) u_j.
\end{align*}
To identify the symbols, we substitute\footnote{Recall that the substitution of $\partial$ by $\xi$ to get $\ell_{ij}(\xi), B_{hj}(\xi)$ means that for any multi-index $I = (I_1, \dots, I_n)$, the partial derivative $e_1^{I_1}\dots e_n^{I_n}$ is substituted by the polynomial $\xi_1^{I_1}\dots \xi_n^{I_n}$. } the differential operator $\partial$ in $\ell_{ij}(\partial)$ and $B_{hj}(\partial)$ with the polynomials of  $\xi = (\xi_1, \dots, \xi_n)$. Let $\ell_{ij}'(\xi)$ consist of the homogeneous polynomials  in $\ell_{ij}(\xi)$ of degree $2$.   The \emph{principle symbol} of the differential operator $L$, denoted by $L'(\xi)$,  is the square matrix of size $N$ whose $(i,j)$-th entry  is $\ell'_{ij}(\xi)$. We express the principal symbol $L'(\xi)$ in the following $[1]+[N-1]$ block form:
 \begin{equation} \label{eq:block}
	L'(\xi)=  \left[\begin{array}{c|c} (1-n)|\xi|^2& * \\ 
	\hline 0 & |\xi|^2 I_{(N-1)}  
	\end{array}\right]
\end{equation}
where $|\xi| = \sqrt{\xi_1^2 +\dots+ \xi_n^2}$, $I_{N-1}$ is the identity  matrix of size $(N-1)$, and the asterisk represents the row that contains polynomials in $\xi$ (homogeneous of degree~$2$). Since the $L'(\xi)$ is an upper triangular matrix, we compute 
\[
	\mathrm{det} \, 	L'(\xi)=(1-n) |\xi|^{2N} \neq 0 \quad\mbox{ for any real $\xi\neq 0$}.
\]
Thus, $L$ is an elliptic operator as in \cite[p. 39]{Agmon:1964us}. (Implicitly, we use the weights $s_i=0$, $t_j=2$ for all $i,j$ where the weights $s_i, t_j$ (as well the $r_h$ that appears below) are defined as in \cite{Agmon:1964us}.)

We now identify the principal symbol of the boundary operator $B$. Let $B'_{hj}(\xi)$ consist of the polynomials in $B_{hj}(\xi)$ of the highest degree in each $(Bu)_h$ equation. More precisely, the highest degree in the first and third equations in $B$ is $0$ (which corresponds to the weight $r_h=-2$) and the highest degree in the other equations of $B$ is $1$ (which corresponds the weight $r_h=-1$). The principal symbol, denoted by $B'(\xi)$, is an $N\times N$ matrix whose $(h, j)$-th entry is $B_{hj}'(\xi)$. Instead of writing out $B'(\xi)$ in the matrix form, we list all the rows of the matrix $B'(\xi) \begin{bmatrix} u_1\\ \vdots\\ u_N\end{bmatrix}$ (and substitute $(u_1, \dots, u_N)$ with $(v, h)$): for $\alpha, \beta = 1,\dots, n-1$ and $i=1,\dots, n$:
\begin{align*}
	&v\delta_{\alpha \beta} + h_{\alpha\beta}\\
	&\tfrac{1}{2}(n-1) \xi_n v + \tfrac{1}{2} \xi_n \sum_{\alpha=1}^{n-1} h_{\alpha\alpha} -\sum_{\alpha=1}^{n-1} \xi_\alpha h_{n\alpha} \\
		&h_{n n}\\
	 &\left(\sum_{i=1}^n \xi_i h_{i\alpha} \right)- \xi_\alpha v.
\end{align*}
Before we proceed, we verify the fact to be used later that 
\begin{align}\label{eq:regular}
	\Det B'( \xi)\neq 0  \quad \mbox{for all $\xi=(\xi', \mathsf{i}) $ with $|\xi'|=1$ (and $\xi_n = \mathsf{i}$)}
	\end{align}
where $\xi':=(\xi_1,\dots, \xi_{n-1})$ are real and $\mathsf{i}$ denotes a unit imaginary number. Since $B'(\xi)$ is a square matrix, it is equivalent to directly verifying that the only solution $( v, h)$ to the following linear system is the zero solution:
\begin{align*}
	&v\delta_{\alpha \beta} + h_{\alpha\beta}=0\\
	&\tfrac{1}{2} (n-1) \mathsf{i} v + \tfrac{1}{2}  \mathsf{i}\sum_{\alpha=1}^{n-1} h_{\alpha\alpha} -\sum_{\alpha=1}^{n-1} \xi_\alpha h_{n\alpha}=0 \\
	& h_{n n}=0\\
	&\left(\sum_{\beta=1}^{n-1} \xi_\beta h_{\beta \alpha} \right) + \mathsf{i} \, h_{n\alpha} - \xi_\alpha v=0. 
\end{align*}

For the rest of the proof, we show how \eqref{eq:regular} implies that the boundary operator $B$ satisfies the complementary boundary condition of \cite[pp. 42-43]{Agmon:1964us}. As preparation, we compute the adjoint matrix of $L'(\xi)$:
\begin{align}
\adj L'(\xi)&:= \Det L'(\xi) (L'(\xi))^{-1} =   (1-n)|\xi|^{2N} \left[\begin{array}{c|c}\frac{ 1}{(1-n)}|\xi|^{-2}& \frac{-1}{1-n} |\xi|^{-2}* \notag \\ 
	\hline 0 & |\xi|^{-2} I_{(N-1)}  
	\end{array}\right]\\
	&=(1-n) |\xi|^{2(N-1)} \left[\begin{array}{c|c}\frac{ 1}{(1-n)}& \frac{-1}{1-n}* \\ 
	\hline 0 &  I_{(N-1)}  
	\end{array}\right]\label{eq:adj}
\end{align}
where in the first line $ \frac{-1}{1-n} |\xi|^{-2}*$ denotes $\frac{-1}{1-n} |\xi|^{-2}$ multiplying the asterisk row in \eqref{eq:block}, and similarly for the second line. From now on, we assume $|\xi'|=1$ and $\xi_n=\tau$. Denote the polynomial $M^+(\tau) = (\tau-\mathsf{i})^N$   in $\tau$.  (The polynomial is defined so that it has the root $\mathrm{i}$  with multiplicity $N$ where $\mathrm{i}$ is the root (with multiplicity $N$) of  $\det L'(\xi)=0$ that has positive imaginary part.)

The boundary operator is said to be \emph{complementary} if for any $\xi= (\xi', \tau )$ with $|\xi'|=1$ and for any constants $C_1,\dots, C_N$ satisfying
\begin{align}\label{eq:complementary}
	B'(\xi) \, \adj L'(\xi) \begin{bmatrix} C_1\\\vdots\\ C_N\end{bmatrix} = M^+(\tau) P(\tau)
\end{align}
where $P(\tau)$ is a column vector whose entries are polynomials in $\tau$, we must have $C_1=\dots= C_N=0$.

The expression of the adjoint matrix in \eqref{eq:adj} now takes the form for $|\xi'|=1$: 
\[
	\adj L'(\xi', \tau )= (1-n) (\tau-\mathsf{i})^{N-1}(\tau+\mathsf{i})^{N-1} \left[\begin{array}{c|c}\frac{ 1}{(1-n)}& \frac{-1}{1-n}* \\ 
	\hline 0 &  I_{(N-1)}\end{array}\right].
\]
So we can cancel out the common factor $(\tau-i)^{N-1}$ from the both sides of \eqref{eq:complementary}  and get 
 \[
	(1-n) (\tau+\mathsf{i})^{N-1} B'(\xi',\tau )  \left[\begin{array}{c|c}\frac{ 1}{(1-n)}& \frac{-1}{1-n}* \\ 
	\hline 0 &  I_{(N-1)}\end{array}\right] \begin{bmatrix} C_1\\\vdots\\ C_N\end{bmatrix} =(\tau-\mathsf{i}) P(\tau).
\]
To solve for $C_1,\dots, C_N$, we set $\tau=\mathsf{i}$  to get the homogeneous system:
 \[
	B'(\xi', \mathsf{i})  \left[\begin{array}{c|c}\frac{ 1}{(1-n)}& \frac{-1}{1-n}* \\ 
	\hline 0 &  I_{(N-1)}\end{array}\right] \begin{bmatrix} C_1\\\vdots\\ C_N\end{bmatrix} =0.
\]
We have shown that the first matrix in the left hand side is nonsingular in \eqref{eq:regular} and the matrix in the block form is clearly nonsingular, so we conclude that $C_1=\dots=C_N=0$.

\end{proof}

We explain how to obtain Theorem~\ref{thm:AF} from the above lemma. Once $DR|_g$ has closed range, it suffices to show that the adjoint operator has trivial kernel, i.e. for any $V\in (\C^{0,\alpha}_{-2-s}(M))^*$ solving $DR|_g^* (V)=0$ weakly, we have $V\equiv 0$. Elliptic regularity implies that a weak solution $V$ is at least $\C^{2,\alpha}_{\mathrm{loc}}$ and thus $DR|_g^*(V)=0$ in the pointwise sense; that is, $V$ is a static potential. Recall that a nonzero static potential on an asymptotically flat manifold must be asymptotic to a nontrivial linear combination of $\{1, x_1,\dots, x_n\}$ (see, e.g. \cite[Proposition B.4]{Huang:2018ue}), which, however, cannot be a bounded functional on $\C^{0,\alpha}_{-2-s}(M)$. We then conclude that $V$ is identically zero.

\bibliography{ALH_rigidity}

\end{document}